\documentclass[a4paper,11pt,reqno]{amsart}

\usepackage{ulem} 

\usepackage[margin=1in,includehead,includefoot,
            inner=20mm,
            outer=26mm,
            top=20mm,
            bottom=25mm,
            marginparsep=5mm,
            marginparwidth=20mm,
            ]{geometry}
\usepackage[utf8]{inputenc}
\usepackage[T1]{fontenc}
\usepackage{lmodern,microtype}
\usepackage{upref}
\usepackage{mathtools,amssymb,amsthm,amsmath}
\usepackage[foot]{amsaddr}
\usepackage{enumerate}
\usepackage{graphicx}
\usepackage{hyperref}
\usepackage{wrapfig}
\usepackage{array}
\usepackage[margin=1cm]{caption}
\usepackage{bookmark}
\usepackage{mycommands}
\usepackage{dsfont}
\usepackage{nicefrac}
\usepackage{comment}

%


\makeatletter
\let\old@setaddresses\@setaddresses
\def\@setaddresses{\bigskip{\parindent 0pt\let\scshape\relax\let\ttfamily\relax\old@setaddresses}}
\makeatother


\newtheorem{thm}{Theorem}
\newtheorem{corollary}[thm]{Corollary}
\newtheorem{prop}[thm]{Proposition}
\newtheorem{lem}[thm]{Lemma}
\theoremstyle{remark}

\newtheorem{rem}{Remark}
\theoremstyle{definition}
\newtheorem{example}{Example}
\newtheorem{dfn}{Definition}
\newtheorem{asu}{Assumption}

\makeatletter
\setlength{\@fptop}{0pt plus 1fil}
\setlength{\@fpsep}{8pt plus 2fil}
\setlength{\@fpbot}{0pt plus 1fil}
\makeatother

\linespread{1.09}

\hypersetup{
    pdftitle={}
    pdfauthor={Nicolas Zalduendo}
}

%

\newcommand{\onesp}{{\mathbf 1_p}}
\newcommand{\onesq}{{\mathbf 1_q}}

\begin{document}
    
    \title{The Multi-type Bisexual Galton-Watson Branching Process}
    \author{Coralie Fritsch$^1$, Denis Villemonais$^1$, Nicolás Zalduendo$^1$}
    \footnotetext[1]{Universit\'e de Lorraine, CNRS, Inria, IECL, UMR 7502, F-54000 Nancy, France}
    \email{coralie.fritsch@inria.fr; denis.villemonais@univ-lorraine.fr; zalduend1@univ-lorraine.fr}

    \begin{abstract}
    In this work we study the bisexual Galton-Watson process with a finite number of types, where females and males mate according to a ``mating function’’ and form couples of different types. We assume that this function is superadditive, which in simple words implies that two groups of females and males will form a larger number of couples together rather than separate. In the absence of a linear reproduction operator which is the key to understand the behaviour of the model in the asexual case, we build a concave reproduction operator and use a concave Perron-Frobenius theory to ensure the existence of eigenelements. Using this tool, we find a necessary and sufficient condition for almost sure extinction as well as a law of large numbers. Finally, we study the almost sure long-time convergence of the rescaled process  through the identification of a supermartingale, and we give sufficient conditions to ensure a convergence in $L^1$ to a non-degenerate limit.  
    \end{abstract}

    \maketitle

\section{Introduction}

    The single-type bisexual Galton-Watson branching process is a modification of the standard Galton-Watson process. It assumes that there exist two disjoint classes (sexes), males and females who together form mating units (or couples) which can accomplish reproduction. This process was first introduced by Daley in \cite{daley1968extinction}: it consists of a population where, in every generation $n=1,2,3,\dots$,  $F_n$ females and $M_n$ males form $Z_n=\xi(F_n,M_n)$ mating units, for $\xi$ a suitable and deterministic \textit{mating function}. Each mating unit reproduces independently of the others and with identical distribution giving birth to the new generation of males and females. 

    \begin{example}\label{example:mf} Some examples of mating functions are
        
        \begin{enumerate}
            
            \item $\xi(x,y)=x\min\{1,y\}$ called the \textit{promiscuous mating} model.
            \item $\xi(x,y)=\min\{x,y\}$ called the \textit{perfect fidelity mating} model.
            \item $\xi(x,y)=x$, which corresponds to the classical Galton-Watson model.
            
        \end{enumerate}
    \end{example}
    
    Daley studied in \cite{daley1968extinction} the properties for the first two mating functions of the previous example. Since Daley's work, extinction conditions have been studied for models with a more general family of superadditive mating functions (see for instance \cite{hull1982necessary}, \cite{bruss1984note}, \cite{daley1986bisexual}) and, in the last decades, results on the limit behaviour of this kind of processes were obtained (see for example \cite{alsmeyer1996bisexual}, \cite{alsmeyer2002asexual}, \cite{gonzalez1996limit}, \cite{gonzalez19972}). From these works, new models of two-sex populations have been developed, such as processes in random or varying environment (\cite{ma2006bisexual}, \cite{ma2009two}, \cite{molina2003bisexual}), processes with immigration (\cite{gonzalez2000limit}, \cite{gonzalez1996limit}, \cite{ma2006asymptotic}), processes with mating function depending on the number of couples (\cite{molina2002bisexual}, \cite{molina2006lalpha}, \cite{xing2005extinction}) and more recently, processes with random mating (\cite{jacob2017general}, \cite{molina2019population}). The interested reader can also consult the surveys of Hull \cite{hull2003survey}, Alsmeyer \cite{alsmeyer2002bisexual} and Molina \cite{molina2010two} for a wide description of the work accomplished on this family of processes.

\medskip

The asexual multi-type Galton-Watson branching process with a finite number of types (see \cite{harris1964theory}) is a discrete time Markov Chain on $\NN^p$, for $p$ a fixed integer, and can be thought as a system of particles where each particle is characterized by a type among $p$ options. Each particle reproduces independently and with an offspring distribution that depends only on its type, and if we define $X_{i,j}\in L^1$ the number of type $j$ particles produced by a type $i$ progenitor, then, under some assumptions on the process, the greatest eigenvalue of the matrix  $H_{i,j}=\EE(X_{i,j})$ determines if the process will be eventually extinct with probability $1$. 

\medskip

    Our focus of study is the multi-type bisexual process with a finite number of types. 
    Although specific models were studied in the two-sex population literature (see \cite{karlin1973criteria}, \cite{hull1998reconsideration}),  no general 
   mathematical description for a superadditive multi-type model has yet been established. The aim of this paper is to fill this gap.

We will consider a general model, which includes the natural extension of Daley's bisexual model to multi-type processes, where the vector of couples in the $n-$th generation is defined by 
         \begin{equation}
         \label{def:bGWbp}
         Z_n=(Z_{n,1},\dots,Z_{n,p})=\xi((F_{n,1},\dots,F_{n,n_f}),(M_{n,1},\dots,M_{n,n_m})),
         \end{equation}
where $\xi: \NN^{n_f}\times \NN^{n_m}\to \NN^p$ is a positive function such that $\xi(0,0)=0$ and $(F_{n,1},\dots,F_{n,n_f})$ and $(M_{n,1},\dots,M_{n,n_m})$ are the vectors of females and males respectively with $p,n_f,n_m\in \NN=\{0,1,2,\dots\}$ the number of types of couples, females and males respectively.  Each couple reproduces independently from the others and produces females and males according to a distribution that depends only on its type, such that
\[
        F_{n+1,j}=\sum_{i=1}^p\sum_{k=1}^{Z_{n,i}} X_{i,j}^{(k,n+1)},\text{ for }1\leq j\leq n_f,\ M_{n+1,j} =\sum_{i=1}^p\sum_{k=1}^{Z_{n,i}} Y_{i,j}^{(k,n+1)},\text{ for }1\leq j\leq n_m,
\]
where,  $(X^{(k,n)},Y^{(k,n)})_{k,n\in\NN}$ is a family of i.i.d. copies of $((X_{i,j})_{1\leq i \leq p, 1 \leq j \leq n_f},(Y_{i,j})_{1\leq i \leq p, 1\leq j \leq n_m})$
where  $X_{i,j}$ represents the number of female offspring of type $j$ produced by one couple of type $i$, and similarly for the males and $Y_{i,j}$. We recover Daley's process by setting $p=n_f=n_m=1$.

\medskip

    Since types may also encode gender, and to simplify our notation, we present in Section~\ref{sec:main.results} our definitions and main results in a more general setup, since they hold true for asexual and multi-sexual Galton-Watson processes (as they appear for instance for several plants species~\cite{BicknellKoltunow2004} and~\cite{BilliardLopez-VillavicencioEtAl2011,BilliardTran2012}). We present a law of large numbers (Theorem~\ref{thm:LGN1}), necessary and sufficient conditions for extinction (Theorem~\ref{thm:ext}) as well as asymptotic behaviour of the process (Theorems~\ref{thm:integrabilitycriterion} and \ref{thm:type_profiles}). We compare our results with existing works at the end of Section~\ref{sec:main.results}. Section~\ref{sec:LLN} is devoted to the proof of Theorem~\ref{thm:LGN1}. In Section~\ref{sec:eigenpbl}, we turn our attention to some extra properties, such as the eigenvalue problem  for concave functions, which play a fundamental role in the study of the extinction conditions, and prove Theorem~\ref{thm:ext}. Section~\ref{sec:proof.thm.integrabilitycriterion} is devoted to the proof of Theorem~\ref{thm:integrabilitycriterion}.
    Finally, we identify and establish properties on a supermartingale and prove Theorem~\ref{thm:type_profiles} in Section \ref{sec:supermartingale}.

\medskip

Notation: 
Unless otherwise stated, for $z\in (\RR_+)^p$, we denote by $z_i$ the i-th component of $z$.
We set $S:=\{z \in (\RR_+)^p, |z|=1\}$ to be the unitary ball for the $1-$norm on $(\RR_+)^p$, with $|z|=z_1+\dots+z_p$, and define $S^*$ as the elements on $S$ with strictly positive components. 
All random variables and random vectors are defined on the same probability space $(\Omega, \mathcal F, \PP)$, we denote $\EE$ for the expectation associated to $\PP$ and we let $L^1=L^1(\Omega,\mathcal F, \PP)$ be the set of integrable random variables defined on this space. 

\section{Model description and main results}
\label{sec:main.results}

\subsection{Model description}
For $p,q\in\NN^*=\{1,2,\dots \}$, we consider the process $(Z_n)_{n\in\NN}$ with values in $\NN^p$
 and the process $(W_n)_{n\in\NN}$ with values in $\NN^q$, where $Z_n$ and $W_n$ represent the mating units (of $p$ different types) and the individuals population (where individuals are of $q$ different types) respectively at the $n$-th generation.
We assume that at each generation $n\geq 1$, $Z_n$ is entirely determined by $W_n$, through a mating function $\xi: \NN^q\to \NN^p$ satisfying $\xi(0)=0$, that is
\[
    Z_n=(Z_{n,1},\dots,Z_{n,p})=\xi(W_{n,1},\dots,W_{n,q}),
\]
where the individuals population $W_n$ at the $n$-th generation is produced by the $Z_{n-1}$ mating units of the previous generation. Moreover, we assume that each mating unit reproduces independently from the others and is such that
\[
     W_{n,j}=\sum_{i=1}^p\sum_{k=1}^{Z_{n-1,i}} V_{i,j}^{(k,n)},\text{ for }1\leq j\leq q,
\]
with $(V^{(k,n)})_{k,n\in\NN}$ a family of i.i.d. copies of $V=(V_{i,j})_{1\leq i \leq p; 1\leq j \leq q}$, where $V_{1,\cdot},\dots, V_{p,\cdot}$ are $p$ mutually independent random vectors with values in $\NN^q$.  The random variable $V_{i,j}^{(k,n)}$ represents the number of offspring of type $j$ produced by the $k$-th mating unit of type $i$ of the $(n-1)$-th generation. Note that the offspring $V_{i,j_1}^{(k,n)}$ and $V_{i,j_2}^{(k,n)}$ of type $j_1$ and $j_2$ produced by the same mating unit are not necessary independent.

    Considering the empty sum as zero,  $(Z_n)_{n\in \NN}$ forms a discrete time Markov Chain on $\NN^p$ with absorbing state $0$. We define the probability of extinction with initial condition $z\in \NN^p$ as 
    \[
    q_z=\PP(\exists n\in \NN, Z_n=0|Z_0=z),
    \]
    and declare that the process will be almost surely extinct if $q_z=1$ for all $z\in \NN^p$.
    
    Although our model is sufficiently general to describe multi-type multi-sexual Galton-Watson branching process (see Example~ \ref{exa:bGWbp}),
    keeping in mind our motivation for the definition of this process, we call $(Z_n)_{n\in\NN}$ the multi-type bisexual Galton-Watson branching process (from now on, multi-type bGWbp).

    Throughout the paper, we make the assumption that the mating function $\xi$ of the process is superadditive, that is, 
        \begin{equation}
        \label{eq:superadditive}
        \xi(x_1+x_2)\geq \xi(x_1)+\xi(x_2),\ \forall x_1,x_2\in \NN^q.
        \end{equation}
    The intuition for this type of processes is that two populations form a bigger number of mating units together rather than separate. The idea of a superadditive bisexual Galton-Watson process for the single-type case was first introduced by Hull in \cite{hull1982necessary} and necessary and sufficient conditions for certain extinction were given by Daley et al. in \cite{daley1986bisexual}.

    \begin{rem}
        It will be useful in the proof to observe that, if we consider two superadditive functions $\xi_1,\xi_2$ such that $\xi_1(x)\leq \xi_2(x),\ \forall x\in \NN^q$, then a multi-type bGWbp with mating function $\xi_1$ is stochastically dominated from above by a process with the same offspring distribution but with mating function $\xi_2$. 
    \end{rem}

    \begin{example}[Multi-type bisexual Galton-Watson process] \label{exa:bGWbp}
    We recover the multi-type bGWbp setup presented in the introduction if we set $q=n_f+n_m$ and $(W_{n,1},\dots,W_{n,n_f})$ as the vector of females and $(W_{n,n_f+1},\dots,W_{n,n_f+n_m})$ as the vector of males of the $n-th$ generation. We can also extend this definition to a multi-sexual process by separating the set of possible types into a larger number of sexes.
    \end{example}

    We assume that all the random variables $(V_{i,j})_{1\leq i \leq p, 1\leq j \leq q}$ are integrable and define the matrix $\VV\in\RR^{p,q}$  by
            \begin{equation*}
            \VV_{i,j}=\EE(V_{i,j}),\quad\forall\, 1\leq i\leq p,\ 1\leq j\leq q.
            \end{equation*}
    We assume that, for all $j\in\{1,\ldots,q\}$, $\sum_{i=1}^p \VV_{i,j}>0$.

    We now define $\mathfrak M:\RR_+^p \longrightarrow (\RR_+\cup \{+\infty\})^p$ by
    \begin{equation}
    \label{eq:defM}
    \mathfrak M(z)=\lim\limits_{r\to+\infty} \dfrac{\xi(rz\VV)}{r}=\sup_{r\geq 1}\dfrac{\xi(rz\VV)}{r},
    \end{equation}
    where $\xi$ is any superadditive extension of $\xi$ to $\RR_+^q$ (see Remark~\ref{rem:extension.xi} below). The limit is well defined  and equal to the supremum according to Fekete's Lemma.

    As we will see, the convergence is in fact uniform on any compact subset of $S$ where $\mathfrak M$ is continuous (see Proposition~\ref{prop:M}). In addition $\mathfrak M$ is concave on $\RR_+^p$ (see Proposition~\ref{prop:post_hom_concave}). 

    \begin{rem}
    \label{rem:extension.xi}
        A superadditive function $\xi$ on  $\NN^q$ can always be extended to a superadditive function on  $\RR_+^q$ (for instance setting $x\in\RR_+^q \to \xi(\lfloor x\rfloor)$) and it will appear that $\mathfrak M$ does not depend on the choice of this extension (see Proposition~\ref{prop:M}).
    \end{rem}

    \begin{rem} 
    The functional $\mathfrak M$ plays a similar role as the reproduction matrix in the classical multi-type Galton-Watson case (see Example~\eqref{exa:GW} below).  However in our case, it is not necessarily linear, but only concave. In order to analyse the function $\mathfrak M$ and its iterates, we make use of concave Perron-Frobenius theory and, more precisely, of the results developed by Krause~\cite{krause1994relative} (see Section~\ref{sec:krause}).
    \end{rem}

\subsection{Main results}
    Our first main result  is a law of large numbers which relates $\mathfrak M$ with the behaviour of $Z$ in a large initial population setting and which is proved in Section~\ref{sec:proofLGN1}. We set $\mathfrak M^{n}=\mathfrak M\circ \cdots\circ \mathfrak M$ composed $n$ times with the convention that $\mathfrak M^{0}$ is the identity function, and, for all $i\in\{1,\ldots,p\}$,  $\mathfrak M_i$ is the $i^{th}$ component of $\mathfrak M$.

    \begin{thm}[Law of large numbers]
        \label{thm:LGN1} Let $(z_m)_{m\geq 1}$ be a random sequence in $\NN^p$and $z_\infty\in \RR_+^p\setminus\{0\}$ a deterministic value such that $z_m\sim_{m\to+\infty} mz_\infty$ almost surely. For all $m\geq 1$, denote by $(Z_n^m)_{n\geq 0}$ a multi-type bGWbp with common mating function and offspring distribution, but with initial configuration $Z_0^m=z_m$. Define $\mathfrak M$ as in \eqref{eq:defM} and assume it is finite over $S$. Then, for all $n\geq 0$,
        \begin{align*}
        Z_n^m\sim_{m\to+\infty} m\,\mathfrak M^n(z_\infty)\text{ almost surely}.
        \end{align*}
        If in addition  $(z_m/m)_{m\geq 1}$ is  independent of the random variables $V_{i,j}^{(k,n)}$ and uniformly integrable, then $Z_n^m/m$ converges to $\mathfrak M^{n}(z_\infty)$ in $L^1$. 
    \end{thm}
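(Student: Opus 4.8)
The statement has two parts: an almost-sure equivalence $Z_n^m \sim m\,\mathfrak M^n(z_\infty)$, and an $L^1$-convergence $Z_n^m/m \to \mathfrak M^n(z_\infty)$. I would prove the a.s. part by induction on $n$, the case $n=0$ being the hypothesis. For the inductive step, suppose $Z_n^m \sim m\,\mathfrak M^n(z_\infty)$ a.s.; I must show $Z_{n+1}^m \sim m\,\mathfrak M^{n+1}(z_\infty) = m\,\mathfrak M(\mathfrak M^n(z_\infty))$ a.s. Write $W_{n+1}^m = \sum_{i=1}^p \sum_{k=1}^{Z_{n,i}^m} V_i^{(k,n+1)}$ for the individual population, so $Z_{n+1}^m = \xi(W_{n+1}^m)$. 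By the (multivariate, vector-valued) strong law of large numbers applied componentwise to the i.i.d. arrays $V_{i,\cdot}^{(k,n+1)}$, and using $Z_{n,i}^m \sim m (\mathfrak M^n(z_\infty))_i$ a.s. with $\mathfrak M^n(z_\infty)$ deterministic, one gets $W_{n+1}^m \sim m\, \mathfrak M^n(z_\infty)\,\VV$ a.s. (here one has to be slightly careful about the coordinates $i$ with $(\mathfrak M^n(z_\infty))_i = 0$, for which the corresponding inner sum is $o(m)$ and contributes nothing). Then $Z_{n+1}^m = \xi(W_{n+1}^m)$ with $W_{n+1}^m \sim m\,\mathfrak M^n(z_\infty)\VV$, and the conclusion $\xi(W_{n+1}^m)/m \to \mathfrak M(\mathfrak M^n(z_\infty))$ follows from the definition \eqref{eq:defM} of $\mathfrak M$ together with the uniform-convergence / continuity statement of Proposition~\ref{prop:M}; superadditivity of $\xi$ gives monotonicity of $\xi(rz\VV)/r$ in $r$, which upgrades the radial limit to a limit along the genuine sequence $W_{n+1}^m/m \to \mathfrak M^n(z_\infty)\VV$.

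\smallskip

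For the $L^1$ part, again I proceed by induction on $n$, the base case $n=0$ being the assumed uniform integrability of $(z_m/m)_m$. Assuming $Z_n^m/m \to \mathfrak M^n(z_\infty)$ in $L^1$ (hence $(Z_n^m/m)_m$ is uniformly integrable and, by the a.s. part, converges a.s.), I want $Z_{n+1}^m/m \to \mathfrak M^{n+1}(z_\infty)$ in $L^1$. Since $\xi$ is superadditive and $\xi(0)=0$ one has $\xi(w) \le \mathfrak M(w)$ pointwise (from \eqref{eq:defM} with $r=1$, after extending), and $\mathfrak M$ is concave with $\mathfrak M(0)=0$, hence subadditive and bounded above by a linear function; therefore $0 \le Z_{n+1,j}^m \le (\text{const})\sum_\ell W_{n+1,\ell}^m$. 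To get uniform integrability of $Z_{n+1}^m/m$ it thus suffices to get uniform integrability of $W_{n+1}^m/m$. Here I would condition on $\mathcal F_n := \sigma(Z_0^m,\dots,Z_n^m)$: given $Z_n^m$, $W_{n+1}^m$ is a sum of $|Z_n^m|$ independent integrable terms, so $\EE[W_{n+1,j}^m \mid \mathcal F_n] = \sum_i Z_{n,i}^m \VV_{i,j}$, giving $\EE[W_{n+1}^m]/m = \EE[Z_n^m/m]\,\VV \to \mathfrak M^n(z_\infty)\VV$; combined with the a.s. convergence $W_{n+1}^m/m \to \mathfrak M^n(z_\infty)\VV$ and Scheffé's lemma, this yields $W_{n+1}^m/m \to \mathfrak M^n(z_\infty)\VV$ in $L^1$, hence uniform integrability of $W_{n+1}^m/m$, hence of $Z_{n+1}^m/m$. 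The independence hypothesis on $(z_m/m)_m$ is what lets me assert $\EE[W_{n+1,j}^m] = \sum_i \EE[Z_{n,i}^m]\VV_{i,j}$ cleanly (Wald-type identity, even though $Z_{n,i}^m$ is itself random — one handles this by the same conditioning and the fact that the offspring variables of generation $n+1$ are independent of $\mathcal F_n$). Finally, uniform integrability plus the already-established a.s. convergence $Z_{n+1}^m/m \to \mathfrak M^{n+1}(z_\infty)$ gives convergence in $L^1$ by Vitali's theorem.

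\smallskip

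\textbf{Main obstacle.} The delicate point is transferring the radial definition of $\mathfrak M$ in \eqref{eq:defM} to the behaviour of $\xi$ along the actual (random) trajectory $W_{n+1}^m$: a priori $W_{n+1}^m/m$ is only \emph{asymptotically} equal to $m\,\mathfrak M^n(z_\infty)\VV / m$ and need not lie on the ray $\{r z \VV\}$. Controlling this requires the uniform convergence of $\xi(rz\VV)/r$ on compact subsets of $S$ where $\mathfrak M$ is continuous, together with continuity of $\mathfrak M$ at the relevant point — precisely what Proposition~\ref{prop:M} provides — plus the monotonicity in $r$ coming from superadditivity, to sandwich $\xi(W_{n+1}^m)/m$ between $\xi(r z\VV)/r$ for slightly smaller and larger radii. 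A secondary nuisance is the bookkeeping for null coordinates of $\mathfrak M^n(z_\infty)$ in the inductive step, and making sure the finiteness hypothesis ``$\mathfrak M$ finite over $S$'' is propagated so that $\mathfrak M^{n+1}(z_\infty)$ is well defined and finite throughout.
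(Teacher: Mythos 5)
Your high-level structure---induction on $n$, reduction to a one-step statement, SLLN for the a.s.\ part, uniform integrability plus Vitali for the $L^1$ part---matches the paper's. The $L^1$ step is where you genuinely diverge, and your route there is in fact cleaner; the a.s.\ step, however, as you describe it, has a real gap that the paper's Lemma~\ref{lem: M(z)} fixes by a different device.

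For the a.s.\ step, the paper does \emph{not} invoke the uniform convergence of Proposition~\ref{prop:M} that you lean on, and for good reason: that proposition holds on a compact $K\subset S$ where $\mathfrak M$ is \emph{continuous}, and concavity only guarantees continuity on $S^*$. When $\mathfrak M^n(z_\infty)$ has a zero coordinate, its normalization lies on $\partial S$, where continuity can fail, so Proposition~\ref{prop:M} simply does not apply. Moreover, the monotonicity you invoke (``$\xi(rz\VV)/r$ nondecreasing in $r$'') is the wrong monotonicity for the sandwich: a priori $W_{n+1}^m$ is not of the form $r z\VV$ for any $r$, so controlling $\xi$ along the ray does not control $\xi(W_{n+1}^m)$. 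What one actually needs is monotonicity of $\xi$ \emph{as a function} on $\NN^q$, and a sandwich of $W_{n+1}^m$ by vectors of the form $k z_\infty\VV_{\pm}^\varepsilon$ where $\VV_\pm^\varepsilon$ perturbs only the nonzero entries of $\VV$ (this is essential, since $V_{i,j}=0$ a.s.\ when $\VV_{i,j}=0$, so those components of $W_{n+1}^m$ vanish identically and cannot be sandwiched by $(1\pm\varepsilon)$-dilations). The paper's Lemma~\ref{lem: M(z)} does exactly this: it gets $\xi(k z_\infty\VV_-^\varepsilon)/k \le \xi(W_1^k)/k \le \xi(k z_\infty\VV_+^\varepsilon)/k$ for large $k$ from the SLLN and monotonicity of $\xi$, bounds $\VV_\pm^\varepsilon$ by $(1\pm\delta)\VV$ with $\delta=\varepsilon/\min\VV_{i,j}$, uses positive homogeneity $\mathfrak M((1\pm\delta)z)=(1\pm\delta)\mathfrak M(z)$, and lets $\varepsilon\to 0$. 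No continuity of $\mathfrak M$ is ever used. Also, a minor but genuine type error: ``$\xi(w)\le\mathfrak M(w)$'' does not parse, since $\xi$ takes arguments in $\NN^q$ and $\mathfrak M$ in $\RR_+^p$; $r=1$ in \eqref{eq:defM} gives only $\xi(z\VV)\le\mathfrak M(z)$. The correct route to your needed linear bound $Z_{n+1}^m\le C|W_{n+1}^m|$ is the paper's: fix $z_0\in\NN^p$ with $z_0\VV\ge\mathbf 1_q$, so $w\le |w|z_0\VV$, hence $\xi(w)\le\xi(|w|z_0\VV)\le |w|\mathfrak M(z_0)$ by monotonicity of $\xi$ and the definition of $\mathfrak M$.

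For the $L^1$ step, your route is different from the paper's and works. You condition on $\mathcal F_n$, use the Wald-type identity $\EE[W_{n+1,j}^m\mid\mathcal F_n]=\sum_i Z_{n,i}^m\VV_{i,j}$ (valid by the independence hypothesis, and with $\EE|Z_n^m|<\infty$ propagated inductively), so $\EE[W_{n+1}^m/m]=\EE[Z_n^m/m]\VV\to\mathfrak M^n(z_\infty)\VV$ by the inductive $L^1$ hypothesis; combined with the a.s.\ limit of $W_{n+1}^m/m$ to the same (deterministic) vector, Scheff\'e gives $L^1$ convergence of $W_{n+1}^m/m$, hence uniform integrability, and domination $Z_{n+1}^m\le |W_{n+1}^m|\mathfrak M(z_0)$ gives uniform integrability of $Z_{n+1}^m/m$, and Vitali finishes. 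The paper instead bounds $|W_1^m|$ by the sum over $m U^{(m)}_i$ terms with $U^{(m)}=\lfloor z_m/m\rfloor+1$, conditions on the value of $U^{(m)}$, and estimates $\EE|\frac{1}{m}\sum_{j,i,k}(V_{i,j}^{(k,1)}-\VV_{i,j})|$ by splitting on $\{|U^{(m)}|\le A\}$; this avoids Scheff\'e but is more hands-on. Your argument uses the inductive $L^1$ convergence more efficiently and is, if anything, shorter.
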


    As a  consequence, considering for $z\in \RR_+^p$ the sequence $z_m=\lfloor mz \rfloor $ for all $m\in \NN$, we have the following corollary (the second equality is a classical consequence for superadditive sequences).

    \begin{corollary}
    \label{cor:M_second_def} We have for all $z\in \RR_+^p$,
    \[\mathfrak      M(z) = \lim_{m\to +\infty}\dfrac{\mathbb E(Z_1\mid Z_0=\lfloor mz \rfloor )}{m}=\sup_{m\geq 1} \dfrac{\mathbb E(Z_1\mid Z_0=\lfloor mz \rfloor)}{m}.\]
    \end{corollary}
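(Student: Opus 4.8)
The plan is to apply Theorem~\ref{thm:LGN1} with the deterministic sequence $z_m = \lfloor mz\rfloor$ and initial value $z_\infty = z$, and then separately verify the supremum formula. First I would note that $z_m = \lfloor mz\rfloor \in \NN^p$ and $z_m \sim_{m\to+\infty} m z$ coordinatewise (this is immediate since $|mz_i - \lfloor mz_i\rfloor| \le 1$), so the hypotheses of the first part of Theorem~\ref{thm:LGN1} are met; moreover $z_m/m$ is deterministic, hence independent of the $V^{(k,n)}_{i,j}$, and bounded by $z$, hence uniformly integrable, so the $L^1$ conclusion also applies. Taking $n=1$ in Theorem~\ref{thm:LGN1} gives $Z_1^m/m \to \mathfrak M(z)$ in $L^1$, which is precisely $\EE(Z_1 \mid Z_0 = \lfloor mz\rfloor)/m \to \mathfrak M(z)$; this establishes the first equality. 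One subtlety to record: if $z = 0$ then $\lfloor mz\rfloor = 0$ and every term is $0 = \mathfrak M(0)$, so the statement is trivially true there and the use of Theorem~\ref{thm:LGN1} (which requires $z_\infty \neq 0$) is only needed for $z \neq 0$.

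For the second equality I would argue that the sequence $a_m := \EE(Z_1\mid Z_0 = \lfloor mz\rfloor)$ is \emph{superadditive} in the sense $a_{m_1+m_2} \ge a_{m_1} + a_{m_2}$ coordinatewise. Indeed, writing $\EE(Z_1 \mid Z_0 = z') = \xi(z' \VV)$ for the expectation of one generation started from $z'$ (since $\EE(W_1 \mid Z_0 = z') = z'\VV$ by the branching structure and linearity of expectation, and $Z_1 = \xi(W_1)$ — here one must be slightly careful, as $\xi$ is applied \emph{before} taking expectation, so in fact $\EE(Z_1\mid Z_0=z') = \EE(\xi(W_1))$ rather than $\xi(\EE(W_1))$; the correct monotonicity still follows from superadditivity of $\xi$ together with the fact that $W_1$ started from $z'_1 + z'_2$ has the same law as the independent sum of copies started from $z'_1$ and from $z'_2$). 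Concretely, coupling the process from $\lfloor (m_1+m_2)z\rfloor \ge \lfloor m_1 z\rfloor + \lfloor m_2 z\rfloor$ (using $\lfloor a+b\rfloor \ge \lfloor a\rfloor + \lfloor b\rfloor$ and monotonicity of $\xi$ noted in the Remark after \eqref{eq:superadditive}) as a superposition of two independent subpopulations, and using $\xi\big(\sum\big) \ge \sum \xi(\cdot)$ on the resulting male/female vectors, yields $a_{m_1+m_2} \ge a_{m_1}+a_{m_2}$. Fekete's Lemma (applied componentwise, exactly as in the definition \eqref{eq:defM} of $\mathfrak M$) then gives $\lim_m a_m/m = \sup_{m\ge 1} a_m/m$, and combined with the first equality this is the claimed identity.

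The main obstacle I anticipate is the bookkeeping in the superadditivity argument: one has to handle the fact that $Z_1 = \xi(W_1)$ is a nonlinear function of the random offspring vector $W_1$, so the relevant inequality is not a statement about the matrix $\VV$ alone but about the full distribution, and must be obtained via a coupling that splits the initial couples into two independent groups and then applies \eqref{eq:superadditive} pathwise before taking expectations. Everything else — the floor estimates, the invocation of Theorem~\ref{thm:LGN1}, and the componentwise Fekete argument — is routine. (In fact, since Theorem~\ref{thm:LGN1} already tells us the limit equals $\mathfrak M(z)$ and, via \eqref{eq:defM}, that $\mathfrak M(z)$ is itself a supremum of the form $\sup_r \xi(rz\VV)/r$, an alternative shortcut is to compare $a_m/m$ directly with $\xi(mz\VV)/m$ using Jensen-type monotonicity; I would mention this but present the Fekete route as the clean one.)
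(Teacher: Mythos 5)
Your proposal matches the paper's proof exactly in the finite case: invoke Theorem~\ref{thm:LGN1} with $z_m=\lfloor mz\rfloor$ (correctly noting that $z_m/m$ is deterministic, bounded by $z$, hence uniformly integrable) to get the $L^1$ convergence of $Z_1^m/m$, which yields the first equality; then argue that $m\mapsto\EE(Z_1\mid Z_0=\lfloor mz\rfloor)$ is superadditive by coupling two independent subpopulations and applying~\eqref{eq:superadditive} pathwise, and conclude by Fekete. All of that is precisely what the paper does, and the paper even flags the Fekete step as ``a classical consequence for superadditive sequences.''

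The gap is that Theorem~\ref{thm:LGN1} is stated under the hypothesis that $\mathfrak M$ is finite over $S$, while Corollary~\ref{cor:M_second_def} is asserted for \emph{all} $z\in\RR_+^p$ with no finiteness assumption. When some component of $\mathfrak M$ is infinite, the $L^1$ part of Theorem~\ref{thm:LGN1} does not apply (nor does Step~2 of its proof, whose domination $Z_1^m\leq |W_1^m|\mathfrak M(z_0)$ becomes vacuous), so your first equality is not justified in that regime. The paper covers this by a truncation argument: introduce $\hat\xi(x)=|x|\onesp$ and the superadditive mating functions $\xi_{(\alpha)}=\min\{\xi,\alpha\hat\xi\}$, for which the associated $\mathfrak M_{(\alpha)}=\min\{\mathfrak M,\alpha\hat\xi(\cdot\,\VV)\}$ is finite; apply the finite-valued case of the corollary to each $\mathfrak M_{(\alpha)}$; then let $\alpha\to\infty$ using Monotone Convergence for $\EE_{(\alpha)}(Z_1\mid Z_0=\lfloor mz\rfloor)\uparrow\EE(Z_1\mid Z_0=\lfloor mz\rfloor)$, the fact that $\mathfrak M_{(\alpha)}\nearrow\mathfrak M$, and an exchange of the two suprema in $\alpha$ and $m$. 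Your proof would need this extra step (or an explicit restriction of the corollary to finite $\mathfrak M$) to be complete. As a small aside, your suggested ``Jensen-type monotonicity'' shortcut comparing $\EE(\xi(W_1))$ with $\xi(\EE W_1)$ is dubious: $\xi$ is superadditive but generally neither concave nor convex, so Jensen is unavailable for $\xi$ itself; the concavity used in the paper (Lemma~\ref{lem:domM}) is for $\mathfrak M$, which is concave because it is superadditive \emph{and} positively homogeneous.
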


    The function $\mathfrak{M}$ extends to the multi-type case the \textit{mean growth rate} introduced in the single-type case by Bruss in \cite{bruss1984note} and used by Daley et al. in \cite{daley1986bisexual} to study the extinction conditions for the process.   Note also that, in the situation where $z_m=\lfloor mz\rfloor$, one can adapt the proof of Klebaner~\cite{Klebaner1993} to obtain convergence in law in Theorem~\ref{thm:LGN1}, as detailed in~\cite{Adam2016}.  However, almost sure and $L^1$ convergence obtained in Theorem~\ref{thm:LGN1} are needed in the proofs of the following results.

\medskip
    For the rest of the results of this section, we add the following transitivity and primitivity assumptions for the process.

    \begin{asu}
        \label{asu:bigasu}
        We assume that:
        \begin{enumerate}
            \item \label{asu:transience} The process is transitive, which means that
            \begin{equation*}
                \PP\left(\lim_{n\to\infty} |Z_n|\in \{0,+\infty\}\mid Z_0=z\right)=1,\quad\forall z\in \NN^p.
            \end{equation*}
            \item \label{asu:primitive} 
            The process is primitive, that is, for all $i\in \{ 1,\dots, p\}$, there exist $n_i,k_i\in \NN$ big enough such that for all $m\geq n_i$,
            \[
            \EE(Z_m|Z_0=k_ie_i)>0
            \]
            where $e_i$ is the $i-$th canonical vector in $\RR^p$.
        \end{enumerate}
    \end{asu}

    Assumption~\ref{asu:bigasu}.\ref{asu:transience} is a classical transitivity condition. The following cases are some examples where it holds.    
    \begin{enumerate}
        \item For all $i\in\{1,\dots,p\}$, $\PP(W_1=0|Z_0=e_i)>0$.
        \item In the context of the bisexual setting, $\forall i\in\{1,\dots,p\}$, $\PP(M_1=0|Z_0=e_i)>0$ and $\xi(x,0)=0,\ \forall x \in \RR^{n_f}$. 
        \item $\exists \ell \in \{1,\dots,p\}$, $\PP(|Z_1|=2|Z_0=e_\ell)>0$ and the process is strongly primitive, that is Assumption~\ref{asu:bigasu}.\ref{asu:primitive} is satisfied with $k_i=1$ for all $i\in \{1,\dots,p\}$ (see Appendix~\ref{sec:annex1} for details).
        Note that  the (not strong) primitivity of the process is not sufficient. In fact, choosing $q=p=2$ and 
        \[\xi(x,y)
             = \left(\left\lfloor\frac{y}{2}\right\rfloor,x\right),\quad
            V_1 \sim \frac{1}{2}\,\delta_{(2,0)}+\frac{1}{2}\,\delta_{(0,2)},\quad V_2\sim \delta_{(0,1)}.
        \]
        leads to a (not strongly) primitive branching process, satisfying $\PP(|Z_1|=2|Z_0=(1,0))=1/2>0$ and such that $\{(1,0),(0,2)\}$ is a recurrent class.
    \end{enumerate} 

    The following result provides a necessary and sufficient condition for almost sure extinction. This result is proved in Section~\ref{sec:proofMainTheorem}.

    \begin{thm}[Extinction criterion]\label{thm:ext} Assume that Assumption~\ref{asu:bigasu} holds and that $\mathfrak M$ is finite over $S$. Then there exist a unique $\lambda^* > 0$ and a unique $z^*\in S^*$ such that $\mathfrak M (z^*)=\lambda^* z^*$, and we have
        \[
        q_z =1, \forall z\in \NN^p \Longleftrightarrow \lambda^*\leq 1.
        \]
        If $\lambda^*>1$ or if there exists $z'\in \NN^p$ such that one of the components of $\mathfrak M(z')$ is not finite, then there exists $r>0$ such that, if $|z|>r$, then $q_z<1$.
    \end{thm}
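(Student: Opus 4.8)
The statement has three parts: existence and uniqueness of the eigenpair, the equivalence, and the ``large initial population survives'' addendum; the survival implication is the substantial one.

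\emph{Eigenelements.} If $\mathfrak M$ is finite over $S$ then, by homogeneity, it is finite and concave on $\RR_+^p$ (Proposition~\ref{prop:post_hom_concave}), positively homogeneous of degree one and monotone (the last point because $\xi$ is superadditive, hence nondecreasing on $\NN^q$, and $\VV\ge 0$). It is also primitive: from the bound $\EE(Z_m\mid Z_0=z)\le\mathfrak M^m(z)$ proved in the next paragraph and Assumption~\ref{asu:bigasu}.\ref{asu:primitive} one gets $\mathfrak M^m(k_ie_i)\ge\EE(Z_m\mid Z_0=k_ie_i)>0$, i.e. $\mathfrak M^m(e_i)>0$, for every $i$ and all $m\ge n_i$. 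Existence and uniqueness of $(\lambda^*,z^*)\in(0,+\infty)\times S^*$ with $\mathfrak M(z^*)=\lambda^* z^*$ then follow from the concave Perron--Frobenius theory of Krause~\cite{krause1994relative} developed in Section~\ref{sec:krause}.

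\emph{$\lambda^*\le 1$ implies $q_z=1$ for all $z$.} First, $\EE(Z_n\mid Z_0=z)\le\mathfrak M^n(z)$ for all $z\in\NN^p$: the case $n=1$ is Corollary~\ref{cor:M_second_def} with $m=1$, and for the induction step one writes $\EE(Z_{n+1}\mid Z_0=z)=\EE\big(\EE(Z_{n+1}\mid Z_n)\mid Z_0=z\big)\le\EE\big(\mathfrak M(Z_n)\mid Z_0=z\big)\le\mathfrak M\big(\EE(Z_n\mid Z_0=z)\big)\le\mathfrak M^{n+1}(z)$, using the Markov property, Jensen's inequality (each component of $\mathfrak M$ is concave) and monotonicity of $\mathfrak M$. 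When $\lambda^*\le 1$, pick $\beta=\beta(z)$ with $z\le\beta z^*$ (possible since $z^*\in S^*$); iterating $\mathfrak M(z^*)=\lambda^* z^*$ gives $\mathfrak M^n(z)\le\beta(\lambda^*)^nz^*\le\beta z^*$, so $\sup_n\EE(|Z_n|\mid Z_0=z)<\infty$. By Assumption~\ref{asu:bigasu}.\ref{asu:transience}, $|Z_n|$ converges a.s. to a limit in $\{0,+\infty\}$; were this limit $+\infty$ with positive probability, Fatou's lemma would force $\EE(\liminf_n|Z_n|\mid Z_0=z)=+\infty$, contradicting the uniform bound. Hence $|Z_n|\to0$ a.s., and being $\NN$-valued it is eventually $0$, i.e. $q_z=1$.

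\emph{$\lambda^*>1$ implies $q_z<1$ for $|z|$ large.} This is the crux, and the plan is a coupling with a supercritical Galton--Watson process. Call $z\in\NN^p$ \emph{good} if $|z|\ge r$ and $z/|z|\in N$, where $N\subset S$ is a small neighbourhood of $z^*$ and $r$ is large; using the attractivity of $z^*$ in Krause's theory one fixes $N$ and an integer $n_0$ so that $\mathfrak M^{n_0}$ sends the cone over $\overline N$ strictly into itself and multiplies $1$-norms by at least, say, $3$. A compactness argument together with Theorem~\ref{thm:LGN1}, applied to deterministic sequences $\zeta_m\sim mz_\infty$ obtained by padding, shows that for $r$ large, from every good $z$, one has $\PP\big(|Z_{n_0}|\ge 2|z|\text{ and }Z_{n_0}/|Z_{n_0}|\in N\mid Z_0=z\big)\ge 1-\delta$ with $\delta<1/2$; splitting $Z_{n_0}$ into two near-equal halves then yields two good configurations. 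Since $W_{n+1}$ is a sum over the couples present and $\xi$ is superadditive, the two sub-populations issued from these halves stochastically dominate two independent copies of the process started from each half; iterating, the number of good configurations present at times $kn_0$ dominates a Galton--Watson process with offspring $2$ (probability $1-\delta$) or $0$ (probability $\delta$), of mean $2(1-\delta)>1$, which survives with positive probability, and on survival $Z_{kn_0}\ne 0$ for all $k$, so extinction does not occur. For an arbitrary $z$ with $|z|$ large, primitivity of $\mathfrak M$ and the uniform convergence of $\mathfrak M^n(\cdot)/|\mathfrak M^n(\cdot)|$ to $z^*$ on $S$ (Proposition~\ref{prop:M} and Section~\ref{sec:krause}) provide an $n_3$ with $\min_{w\in S}|\mathfrak M^{n_3}(w)|\ge 2$ and $\mathfrak M^{n_3}(w)/|\mathfrak M^{n_3}(w)|\in N$, so by Theorem~\ref{thm:LGN1} (again uniformly in the initial datum, for $|z|$ large) the process reaches a good configuration at time $n_3$ with probability $\ge 1/2$; combined with the previous step, $q_z\le 1-\tfrac12 p_0<1$ for $|z|$ large, with $p_0>0$ the survival probability of the auxiliary Galton--Watson process. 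The step I expect to be the main obstacle is exactly this coupling: producing a law of large numbers uniform in the (large) initial configuration, and organising the superadditive splitting so that the surviving good configurations genuinely dominate a branching process.

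\emph{Some component of $\mathfrak M$ infinite.} Set $\bar\xi^{(K)}(x):=\xi(x)\wedge(K|x|\onesp)$ for $K>0$. The componentwise minimum of two superadditive functions is superadditive, so $\bar\xi^{(K)}$ is a superadditive mating function with $\bar\xi^{(K)}\le\xi$, and its reproduction operator $\mathfrak M^{(K)}$ is finite, nondecreasing in $K$, and $\mathfrak M^{(K)}\uparrow\mathfrak M$. For $K$ large, $\mathfrak M^{(K)}$ is primitive (since $\mathfrak M^{(K),m}(e_i)\uparrow\mathfrak M^m(e_i)>0$) and $\lambda^*(\mathfrak M^{(K)})>1$: if $\mathfrak M(z')_j=+\infty$ then $\mathfrak M^{(K)}(z')_j\to+\infty$, and feeding this component through one further step and then through the primitivity exponent gives $\mathfrak M^{(K),m}(v)\ge\mu_K v$ for a fixed $v\in S^*$ and a fixed $m$, with $\mu_K\to+\infty$, whence $\lambda^*(\mathfrak M^{(K)})\ge\mu_K^{1/m}\to+\infty$ by the variational (Collatz--Wielandt) characterisation of $\lambda^*$. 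The $\lambda^*>1$ argument above used only finiteness and primitivity of the reproduction operator and superadditivity of the mating function --- not Assumption~\ref{asu:bigasu}.\ref{asu:transience} --- so it applies to the $\bar\xi^{(K)}$-process and gives $q_z^{(K)}<1$ for $|z|$ large; by the stochastic domination of the $\bar\xi^{(K)}$-process by the original one (the monotonicity remark), $q_z\le q_z^{(K)}<1$ for such $z$.
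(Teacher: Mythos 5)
Your proofs of the eigenelements, of $\lambda^*\le 1\Rightarrow q_z\equiv 1$, and of the ``infinite $\mathfrak M$'' case all match the paper's: the first two go through $\mathbb E(Z_n\mid Z_0=z)\le\mathfrak M^n(z)$ (Lemma~\ref{lem:domM}), Jensen for concave maps, and Assumption~\ref{asu:bigasu}.\ref{asu:transience}; the last uses the same truncation $\xi\wedge K|x|\onesp$, with only a minor rearrangement of how the blowing-up component and the primitivity exponent are combined. Where you genuinely diverge is the crux, $\lambda^*>1\Rightarrow q_z<1$ for $|z|$ large. The paper deduces it immediately from Theorem~\ref{thm:integrabilitycriterion} (proved in Section~\ref{sec:proof.thm.integrabilitycriterion} independently of the dichotomy): with probability $\ge 1-\eta$, $Z_{n+1}\ge(1-\varepsilon)\mathfrak M(Z_n)$ for all $n\ge n_0$, hence $Z_n\ge(1-\varepsilon)^{n-n_0}\mathfrak M^{n-n_0}(Z_{n_0})>0$ by primitivity. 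You instead run a Daley-style embedded Galton--Watson argument: a uniform LLN on ``good'' configurations near the $z^*$-direction, a near-equal split of $Z_{n_0}$ into two good halves, and superadditivity (Lemma~\ref{lem:superaddi}) to make the fragments dominate independent copies, so the count of good fragments dominates a supercritical $\{0,2\}$-offspring process. Both routes ultimately rest on the same quantitative ingredient (a law of large numbers uniform over large initial states: the paper's Lemma~\ref{lem:PAn}, your compactness argument over padded sequences via Theorem~\ref{thm:LGN1}), but the paper's packaging is more economical here because Theorem~\ref{thm:integrabilitycriterion} is needed anyway, while yours keeps Theorem~\ref{thm:ext} self-contained and closer to the single-type literature. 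The details you yourself flag are real but fixable: you should ask for $|Z_{n_0}|$ a bit larger than $2|z|$ so both integer halves keep norm $\ge r$, pick $N$ strictly stable under $\mathfrak M^{n_0}$ so the rounded halves remain in $N$, and note that $\inf_S\mathcal P\ge\min_i\mathcal P(e_i)>0$ by concavity to justify the uniform growth in the ``reach a good configuration'' step. Your observation that this half of the argument never invokes Assumption~\ref{asu:bigasu}.\ref{asu:transience} is correct and is in fact needed (implicitly) by the paper too when it applies the survival conclusion to the truncated $\xi_{(\alpha)}$-process, for which transitivity is not checked.
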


Before turning to the next result, we point out that the last theorem encompasses the well known extinction criteria for the classical multi-type Galton-Watson process and single-type bisexual Galton-Watson process. Further examples are provided in Section~\ref{sec:examples}.

    \begin{example}[Multi-type Galton-Watson process]\label{exa:GW}
        The Galton-Watson case corresponds to the case where $p=q$ and the mating function is given by the identity function, $\xi(x)=x$, so that the process forms a classical asexual multi-type Galton-Watson process. In this case it is easy to see that $\mathfrak M(z)=z\VV$ and hence $\mathfrak M$ is a linear function and $\lambda^*$ is its greatest eigenvalue. We thus recover the well known fact that $\lambda^*\leq 1$ is a necessary and sufficient condition for certain extinction.
    \end{example}

    \begin{example}[Single-type bisexual Galton-Watson process]\label{exa:sbGW}
        The single-type bisexual Galton-Watson case corresponds to the case where $p=1$ and $q=2$. In this case it is easy to see that $\mathfrak{M}(z)=rz$ for some $r\geq 0$, that   $z^*=1$ and that $\lambda^*=r$. We  recover the fact that $\lambda^*\leq 1$ is a necessary and sufficient condition for certain extinction (see~\cite{daley1986bisexual}).
    \end{example}

    In the following theorem, we deal with the long-time behaviour of the process. In particular we prove that, on the non-extinction event $\{Z_n\neq 0,\ \forall n\geq 0\}$, the process will almost surely follow the direction of the eigenvector $z^*$. The proof of this theorem is in Section~\ref{sec:proof.thm.integrabilitycriterion}. We emphasize that, for this result, we do not assume any $L\log L$ type condition. As far as we know, this result is new even in the $p=1$ and $q=2$ (single type) case.

    In what follows, given $x,y\in \RR^p$, we denote
    \[|x,y|:=\{z\in \RR^p,\: x\leq z\leq y\}.\]
    \begin{thm}[Long time behaviour]
        \label{thm:integrabilitycriterion}
        Assume that Assumption~\ref{asu:bigasu} holds and that $\mathfrak M$ is finite over $S$. Let $\lambda^*$ and $z^*$ given by Theorem~\ref{thm:ext} and assume that $\lambda^*>1$. Then there exists $n_0\geq 1$ such that, for all $\varepsilon\in (0,1)$ and all $\eta\in (0,1)$, there exists $r>0$ such that
        \begin{align*}
        \mathbb P\left(Z_{n_0}\neq 0\text{ and }\forall n\geq n_0,\  Z_{n+1}\in|(1-\varepsilon)\mathfrak M(Z_n),(1+\varepsilon)\mathfrak M(Z_n)|\mid Z_0=z\right)\geq 1-\eta,\ \forall|z|\geq r.
        \end{align*}
        In addition,  on the non-extinction event $\{Z_n\neq 0,\ \forall n\geq 0\}$, and up to a $\mathbb P(\cdot\mid Z_0=z)$-negligible event, for all $k\geq 0$, 
        \begin{align*}
        \lim_{n\to+\infty} \frac{Z_{n+k}}{|Z_n|}=(\lambda^*)^kz^*.
        \end{align*}
    \end{thm}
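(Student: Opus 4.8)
The plan is to prove the two statements in sequence, the first (the uniform "corridor" estimate) being the main analytic work and the second (almost sure convergence of directions) following from it by a Borel--Cantelli argument combined with Theorem~\ref{thm:ext}.

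For the corridor estimate, I would argue as follows. Fix $\varepsilon,\eta\in(0,1)$. The heuristic is that, by the law of large numbers (Theorem~\ref{thm:LGN1}), when $|Z_n|$ is large the next generation $Z_{n+1}$ is close to $\mathfrak M(Z_n)$; since $\mathfrak M$ is positively homogeneous in the sense that $\mathfrak M(rz)\geq r\mathfrak M(z)$ for $r\geq 1$, large populations stay large, so the estimate propagates. The first step is a \emph{one-step} control: by Corollary~\ref{cor:M_second_def} and the $L^1$ half of Theorem~\ref{thm:LGN1}, for every $\delta>0$ there is $r_0$ such that for $|z|\geq r_0$, $\mathbb P(Z_1\in |(1-\varepsilon)\mathfrak M(z),(1+\varepsilon)\mathfrak M(z)|\text{ and }|Z_1|\geq c|z|\mid Z_0=z)\geq 1-\delta$, where $c>1$ can be taken because $\lambda^*>1$ forces $\mathfrak M$ to strictly expand in the relevant cone (here I would use that $z^*$ with $\mathfrak M(z^*)=\lambda^* z^*$, $\lambda^*>1$, dominates the expansion, via concavity and primitivity to pull any direction toward $z^*$ after finitely many steps --- this is where $n_0$ enters). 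The delicate point is that a single-generation estimate with a \emph{fixed} failure probability $\delta$ is not summable over the infinitely many generations; I would instead choose $\delta=\delta_n$ shrinking geometrically, which is affordable precisely because once we are in the corridor the population grows at least geometrically, so $|Z_n|\gtrsim c^n r$ and the threshold $r_0$ is exceeded with exponentially improving margin. Summing $\sum_n\delta_n<\eta$ and taking $r$ large gives the claim. The main obstacle is making the cone/corridor genuinely \emph{invariant}: one must check that if $Z_n$ lies in a neighbourhood of the ray $\mathbb R_+z^*$ and $Z_{n+1}\in|(1-\varepsilon)\mathfrak M(Z_n),(1+\varepsilon)\mathfrak M(Z_n)|$, then $Z_{n+1}$ lies in a comparable neighbourhood of the ray; this uses Lipschitz-type continuity of $\mathfrak M$ on $S$ near $z^*$ (available from concavity and finiteness, Proposition~\ref{prop:M}) together with the contraction toward $z^*$ coming from Krause's concave Perron--Frobenius theory (Section~\ref{sec:krause}).

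For the second statement, I would work on the non-extinction event and, by the first part applied with a sequence $\varepsilon_j\downarrow 0$, note that for each $j$ there is an almost sure event on which $Z_{n+1}\in|(1-\varepsilon_j)\mathfrak M(Z_n),(1+\varepsilon_j)\mathfrak M(Z_n)|$ for all $n$ large enough. Iterating this inclusion $k$ times and using homogeneity of $\mathfrak M$ gives $Z_{n+k}\in|(1-\varepsilon_j)^k\mathfrak M^k(Z_n),(1+\varepsilon_j)^k\mathfrak M^k(Z_n)|$ for $n$ large. Dividing by $|Z_n|$ and using $\mathfrak M^k(Z_n)=|Z_n|\,\mathfrak M^k(Z_n/|Z_n|)$ (again homogeneity, valid asymptotically since $|Z_n|\to\infty$), it remains to show $Z_n/|Z_n|\to z^*$; this is the normalized version with $k=0$, so I would prove it first. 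For that, set $u_n:=Z_n/|Z_n|\in S$; the corridor inclusion says $u_{n+1}$ is within $O(\varepsilon_j)$ of $\mathfrak M(u_n)/|\mathfrak M(u_n)|$, i.e. $u_n$ is an approximate orbit of the normalized map $T(u):=\mathfrak M(u)/|\mathfrak M(u)|$ on $S$. By Krause's theory $T$ has $z^*$ as its unique fixed point and is a contraction (in Hilbert's projective metric, or in the relevant part metric) toward $z^*$ on $S^*$, so approximate orbits converge to $z^*$ as $\varepsilon_j\to0$; taking a diagonal over $j$ yields $u_n\to z^*$ almost surely on non-extinction. Finally, feeding $u_n\to z^*$ back into the iterated corridor bound and letting $j\to\infty$ gives $\lim_n Z_{n+k}/|Z_n|=(\lambda^*)^k z^*$, using $\mathfrak M^k(z^*)=(\lambda^*)^k z^*$. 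The subtle point here is handling the case where $|Z_n|/|Z_{n-1}|$ might not converge on its own before we know $u_n\to z^*$; this is why one proves the direction convergence first (which only needs the \emph{ratios} $u_{n+1}$ vs $T(u_n)$, not absolute growth) and only afterward deduces the growth factor $(\lambda^*)^k$.
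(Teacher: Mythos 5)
Your overall architecture (prove a uniform ``corridor'' estimate first, then deduce the almost sure direction convergence via iteration of the corridor plus Krause's theory) matches the paper's, but several of the technical steps you gloss over are precisely the substantive content of the paper's proof, and as sketched they contain genuine gaps.

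\emph{One-step estimate.} You propose to get the one-step bound ``for every $\delta>0$ there is $r_0$ such that for $|z|\geq r_0$, $\mathbb P(Z_1\in |(1-\varepsilon)\mathfrak M(z),(1+\varepsilon)\mathfrak M(z)|\mid Z_0=z)\geq 1-\delta$'' from Theorem~\ref{thm:LGN1} and Corollary~\ref{cor:M_second_def}. Neither of those results produces a bound that is \emph{uniform over all $z$ with $|z|\geq r_0$}: Theorem~\ref{thm:LGN1} is an asymptotic statement along a fixed sequence $z_m\sim mz_\infty$, and Corollary~\ref{cor:M_second_def} controls only the mean. The paper instead proves a quantitative concentration bound from scratch (Lemma~\ref{lem:PAn}, Step~1) via truncation and Chebyshev's inequality, and this requires restricting to directions bounded away from the boundary of the simplex (the set $U_\delta$), relying on the uniform convergence of Proposition~\ref{prop:M} and on $\inf_{U_\delta\cap S}\mathfrak M_i>0$ from Proposition~\ref{prop:anotherproponM}.

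\emph{Summability.} Your phrase ``choose $\delta=\delta_n$ shrinking geometrically, which is affordable because $|Z_n|\gtrsim c^n r$'' is where the real difficulty sits. Under only an $L^1$ hypothesis on $V_{i,j}$, the one-step failure probability at population size $x$ is controlled by $f(x)=x\sum\mathbb P(V_{i,j}>x)+\sum\mathbb E(V_{i,j}^2\mathds 1_{V_{i,j}\leq x})/x$, which tends to $0$ but with \emph{no quantitative rate}; you cannot simply ``choose'' $\delta_n$ summable. The paper's Lemma~\ref{lem:someconv} proves by a Fubini interchange that $\sum_n f(x_n)$ is finite whenever $(x_n)$ grows geometrically, and that this sum tends to $0$ as $x_0\to\infty$; this identity is the key reason the theorem holds without any $L\log L$ assumption, and it is not recoverable from term-by-term estimates. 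Without this argument (or an equivalent), your proof would implicitly require a stronger moment condition.

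\emph{Reaching the good region and cone invariance.} You worry about making a cone near $\mathbb R_+z^*$ invariant under one step. The paper sidesteps genuine cone invariance: it works with the fixed set $U_\delta$ and, in Step~2 of Lemma~\ref{lem:PAn}, uses Krause's uniform convergence (Theorem~\ref{thm:eigenvalue}(\ref{item:thmKrausev})) to show that after $n_0$ in-corridor steps the direction is near $z^*$, so the constants are uniform along the entire trajectory even though $U_\delta$ itself is not one-step invariant. Separately, arbitrary initial conditions (possibly with some coordinates zero, so outside every $U_\delta$) must be handled: the paper's Lemma~\ref{lem:minoration} shows the process either dies out or enters $U_{\delta_0}$ at a large level within $n_0$ steps with high probability, and this is also where $n_0$ is fixed. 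Your proposal does not address this and a direct cone-invariance argument does not replace it.

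\emph{Second part.} Your proof of $Z_n/|Z_n|\to z^*$ via approximate orbits of a Hilbert-metric contraction is plausible in spirit, but it is not the paper's route and has its own subtleties (an $O(\varepsilon)$-pseudo-orbit of a contraction only shadows the fixed point to $O(\varepsilon)$ accuracy; to finish you still need the diagonalization over $\varepsilon_j\downarrow 0$, which the paper also uses). The paper instead iterates the corridor inclusion directly to sandwich $Z_{n+m+k}/|Z_{n+m}|$ between quantities involving $\mathfrak M^{m+k}(u)/|\mathfrak M^m(u)|$ and appeals to Theorem~\ref{thm:eigenvalue}(\ref{item:thmKrausev}) for uniform convergence of these ratios to $(\lambda^*)^kz^*$, which yields both $k=0$ and general $k$ at once and avoids a separate contraction lemma.
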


    On the event of extinction, $\mathfrak M(Z_n)$ vanishes for $n$ large enough almost surely, which entails that $Z_{n+1}\in|(1-\varepsilon)\mathfrak M(Z_n),(1+\varepsilon)\mathfrak M(Z_n)|$ for all $n$ large enough. For $\lambda^*>1$ (so that extinction is not almost sure), this also holds true with probability one, as proved alongside Theorem~\ref{thm:integrabilitycriterion}. We thus obtain

    \begin{corollary}
    \label{cor:integrabilitycriterion}
    Assume that Assumption~\ref{asu:bigasu} holds and that $\mathfrak M$ is finite over $S$. Then, for all $\varepsilon \in (0,1)$ and all $z\in \NN^p$,
    \begin{align}
    \label{eq:cor}
    \mathbb P\left(\exists N\geq 0\text{ such that, } \forall n\geq N,\ Z_{n+1}\in|(1-\varepsilon)\mathfrak M(Z_n),(1+\varepsilon)\mathfrak M(Z_n)|\mid Z_0=z\right)=1.
    \end{align}
    \end{corollary}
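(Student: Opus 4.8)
The plan is to derive Corollary~\ref{cor:integrabilitycriterion} by combining Theorem~\ref{thm:integrabilitycriterion} with the transitivity Assumption~\ref{asu:bigasu}.\ref{asu:transience} and a handling of the extinction event. First I would split the probability space according to whether $\mathfrak M$ is finite everywhere on $\NN^p$ or not. In the degenerate case where some component of $\mathfrak M(z')$ is infinite for some $z'\in\NN^p$, one should check directly from the definitions that the event in \eqref{eq:cor} still has full probability (the process is then supercritical in a strong sense and the argument below applies with obvious modifications); the substantive case is when $\mathfrak M$ is finite over $S$, which I treat next.

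Assume now $\mathfrak M$ is finite over $S$, so Theorem~\ref{thm:ext} provides $\lambda^*>0$. Consider separately $\lambda^*\leq 1$ and $\lambda^*>1$. If $\lambda^*\leq 1$, then by Theorem~\ref{thm:ext} the process is almost surely extinct, i.e.\ $\PP(\exists N\geq 0,\ Z_N=0\mid Z_0=z)=1$ for every $z\in\NN^p$. On the extinction event, once $Z_N=0$ we have $Z_n=0$ for all $n\geq N$ (the state $0$ is absorbing), hence $\mathfrak M(Z_n)=\mathfrak M(0)=0$ and $Z_{n+1}=0\in|0,0|=|(1-\varepsilon)\mathfrak M(Z_n),(1+\varepsilon)\mathfrak M(Z_n)|$ for all $n\geq N$. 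Thus \eqref{eq:cor} holds with probability one in this case, for every $\varepsilon\in(0,1)$.

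If $\lambda^*>1$, I would condition on the dichotomy from Assumption~\ref{asu:bigasu}.\ref{asu:transience}: under $\PP(\cdot\mid Z_0=z)$, almost surely either $|Z_n|\to 0$ (extinction) or $|Z_n|\to+\infty$. On the extinction event the same absorbing-state argument as above gives \eqref{eq:cor}. On the event $\{|Z_n|\to+\infty\}$, fix $\eta\in(0,1)$ and let $r=r(\varepsilon,\eta)$ and $n_0$ be as in Theorem~\ref{thm:integrabilitycriterion}. By the Markov property applied at a (deterministic) time $n_1$, conditionally on $Z_{n_1}=z'$ with $|z'|\geq r$, the process $(Z_{n_1+n})_{n\geq 0}$ is itself a multi-type bGWbp with initial configuration $z'$, so Theorem~\ref{thm:integrabilitycriterion} gives that, with conditional probability at least $1-\eta$, $Z_{n_1+n+1}\in|(1-\varepsilon)\mathfrak M(Z_{n_1+n}),(1+\varepsilon)\mathfrak M(Z_{n_1+n})|$ for all $n\geq n_0$; in particular the event in \eqref{eq:cor} occurs (with $N=n_1+n_0$). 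Since on $\{|Z_n|\to+\infty\}$ we have $|Z_{n_1}|\geq r$ for all $n_1$ large enough, a Borel--Cantelli / Lévy's zero--one law argument along a sequence of such times $n_1\to\infty$ shows that the event in \eqref{eq:cor} has conditional probability $\geq 1-\eta$ on $\{|Z_n|\to+\infty\}$; letting $\eta\downarrow 0$ upgrades this to probability one. Combining the two events of the dichotomy yields \eqref{eq:cor} for $\lambda^*>1$, completing the proof.

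The main obstacle is the last step: turning the uniform-in-initial-condition estimate of Theorem~\ref{thm:integrabilitycriterion} (which only gives probability $\geq 1-\eta$, and only for $|z|\geq r$) into an almost-sure statement for a \emph{fixed} initial condition $z\in\NN^p$. This requires using transitivity to guarantee that $|Z_n|$ eventually exceeds the threshold $r$ on the non-extinction event, and then invoking the Markov property at a well-chosen random or deterministic time together with a zero--one law to remove the $\eta$. Care is needed because $r$ depends on $\eta$, so one must either let $n_1\to\infty$ before letting $\eta\to 0$, or note that the event in \eqref{eq:cor} is tail-measurable and apply Lévy's zero--one law directly.
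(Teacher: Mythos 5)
Your proposal is correct in substance and follows essentially the same high-level strategy as the paper: handle extinction trivially (the band condition becomes vacuous once $Z_n=0$), and on the survival event reach a large state from which the uniform estimate of Theorem~\ref{thm:integrabilitycriterion} applies, then transfer this back to the fixed initial condition via the Markov property. Where you diverge is in the final technical step. The paper applies the strong Markov property at the stopping time $\tau_{\delta_0,r}\wedge T_0$ furnished by Lemma~\ref{lem:minoration} (which is almost surely finite for \emph{every} starting state, handling extinction and survival in one shot), combines this with the uniform bound~\eqref{eq:eqstart}, and then sends $\eta\to 0$ since the left-hand side no longer depends on $\eta$. You instead work with deterministic times $n_1\to\infty$ and invoke tail-measurability of the event plus Lévy's $0$--$1$ law; this also works and is arguably slightly cleaner in that, once $\mathbb P(A\mid\mathcal F_{n_1})\geq(1-\eta)\mathds 1_{\{|Z_{n_1}|\geq r\}}$ and $\mathbb P(A\mid\mathcal F_{n_1})\to\mathds 1_A$ a.s., the $\{0,1\}$-valuedness of $\mathds 1_A$ already forces $\mathds 1_A=1$ on $\{|Z_n|\to\infty\}$ without even having to let $\eta\downarrow 0$. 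One small inaccuracy in your write-up: the ``degenerate case where some component of $\mathfrak M(z')$ is infinite for $z'\in\NN^p$'' cannot actually occur under the corollary's hypothesis, since $\mathfrak M$ is positively homogeneous (Proposition~\ref{prop:post_hom_concave}), so finiteness over $S$ forces finiteness everywhere on $\RR_+^p$; you can drop that preamble entirely.
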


    \begin{rem} As it will be clear from the proof, the value of $n_0$ in Theorem~\ref{thm:integrabilitycriterion} is in fact chosen deterministically as the minimal $n\in\NN$ such that $\mathfrak M^n(z)>0$ for all $z\in \NN^p\setminus \{0\}$, which exists thanks to Proposition~\ref{prop:primitivity} below.
    \end{rem}

    We now state a theorem related to the rescaled processes of mating units and children and we prove that they both have a non-negative limit with the same direction as the vector $z^*$ given by Theorem~\ref{thm:ext}. We also show that the event of extinction coincides (almost surely) with the event where this limit is equal to zero. This last part is well known in the classical branching case. As far as we know, it is new even in the $p=1$ and $q=2$ (single type) bisexual branching case. This theorem is proved in Section~\ref{sec:supermartingale}.

    \begin{thm}[Asymptotic profile]\label{thm:type_profiles}
        Assume that Assumption~\ref{asu:bigasu} holds, that $\mathfrak M$ is finite over $S$. Then, for all $z\in \NN^p$, there exists a real non-negative random variable $\mathcal C$ such that 
        \begin{equation}
        \label{eq:convergence_Zn}
        \dfrac{Z_n}{(\lambda^*)^n}\xrightarrow[n\to+\infty]{\PP(\cdot\mid Z_0=z)\ a.s.}\mathcal Cz^*\text{ and }\dfrac{W_n}{(\lambda^*)^n}\xrightarrow[n\to+\infty]{\PP(\cdot\mid Z_0=z)\ a.s.} \dfrac{1}{\lambda^*}\mathcal Cz^*\mathbb V,
        \end{equation}
       with $\lambda^*$ and $z^*$ given by Theorem~\ref{thm:ext}.
       
        Assume in addition that $\mathcal C$ is non-degenerate at $0$, which means that $\mathbb P(\mathcal C>0 \mid Z_0=z)>0$ for all $z\in \NN^p$ such that $q_z<1$. Then,  for all $z\in \NN^p$ and up to a $\mathbb P(\cdot \mid Z_0=z)$ negligible event,
    \[\{\mathcal C=0\}=\{\exists n\in\mathbb N,\:Z_n=0\}.\] 
    \end{thm}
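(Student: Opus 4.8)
The plan is to split the proof into two parts: first the almost sure convergence in~\eqref{eq:convergence_Zn}, and then the identification of $\{\mathcal C = 0\}$ with the extinction event.

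For the first part, I would build a supermartingale out of the rescaled process. The natural candidate is $N_n := \langle Z_n, h\rangle/(\lambda^*)^n$ for a suitable dual direction $h$; since $\mathfrak M$ is only concave, $\mathfrak M(z) \leq \lambda^* z^* $ is not available, but concavity plus positive homogeneity of the limiting operator (Proposition~\ref{prop:post_hom_concave}) gives a sub-tangent inequality of the form $\mathfrak M(z) \leq \lambda^* z$ in the direction of $z^*$ after a suitable change of coordinates, i.e. one works with the coordinates in which $z^*$ is the "interior" eigenvector and uses that the concave operator lies below its tangent hyperplane at $z^*$. Concretely, from $\mathbb E(Z_{n+1}\mid Z_n) = \mathfrak M^{?}$-type estimates — more precisely from Corollary~\ref{cor:M_second_def} and Jensen's inequality applied to the concave $\mathfrak M$, $\mathbb E(Z_{n+1}\mid \mathcal F_n) \le \mathfrak M(Z_n)$ when $|Z_n|$ is large, and then $\langle \mathfrak M(Z_n), h\rangle \le \lambda^* \langle Z_n, h\rangle$ by the tangent inequality at $z^*$ — one deduces that $N_n$ is a nonnegative supermartingale (at least eventually, on $\{Z_{n_0}\neq 0\}$, after enlarging by a constant to absorb the small-$|Z_n|$ regime). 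A nonnegative supermartingale converges almost surely; call the limit $\mathcal C$. To upgrade from "$\langle Z_n, h\rangle/(\lambda^*)^n$ converges" to "$Z_n/(\lambda^*)^n \to \mathcal C z^*$ with the full directional statement", I would invoke Theorem~\ref{thm:integrabilitycriterion}: on the non-extinction event, $Z_{n+1} \in |(1-\varepsilon)\mathfrak M(Z_n),(1+\varepsilon)\mathfrak M(Z_n)|$ eventually and $Z_{n+k}/|Z_n| \to (\lambda^*)^k z^*$; combining the convergence of the scalar $N_n$ with the fact that the direction of $Z_n$ stabilizes to $z^*$ forces $Z_n/(\lambda^*)^n$ to converge to a multiple of $z^*$, and consistency pins the multiple to be $\mathcal C$ (independent of $h$). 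The statement for $W_n$ then follows from $W_{n+1} = \sum_{i,k} V^{(k,n+1)}_i$, dividing by $(\lambda^*)^{n+1}$, and applying the law of large numbers of Theorem~\ref{thm:LGN1} conditionally: given $Z_n \sim (\lambda^*)^n \mathcal C z^*$, the next generation $W_{n+1}$ is a sum of $|Z_n|$-many independent offspring vectors, so $W_{n+1}/(\lambda^*)^{n+1} \to (\lambda^*)^{-1}\mathcal C z^* \mathbb V$ almost surely.

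For the second part — identifying $\{\mathcal C=0\}$ with $\{\exists n,\, Z_n = 0\}$ up to a null set — one inclusion is immediate: if $Z_n = 0$ for some $n$ then $Z_m = 0$ for all $m \ge n$ (the state $0$ is absorbing) and hence $\mathcal C = 0$. For the reverse inclusion, I would argue on the non-extinction event. By Assumption~\ref{asu:bigasu}.\ref{asu:transience}, on $\{Z_n \neq 0\ \forall n\}$ we have $|Z_n|\to+\infty$ almost surely. Now use a standard "restart" / branching-property argument: condition on $\mathcal F_n$ for $n$ large; starting from $Z_n$ with $|Z_n|$ large, the non-degeneracy hypothesis $\mathbb P(\mathcal C > 0 \mid Z_0 = z) > 0$ for $q_z < 1$, combined with the fact (from Theorem~\ref{thm:ext}, second part) that $q_z < 1$ once $|z|$ exceeds a threshold $r$, gives a uniform lower bound $\mathbb P(\mathcal C > 0 \mid \mathcal F_n) \ge \delta > 0$ on $\{|Z_n| \ge r\}$ for some $\delta$ not depending on $n$; here one uses that $\mathcal C$ read from time $n$ onwards equals $(\lambda^*)^{-n}$ times the "future $\mathcal C$" so $\{\mathcal C > 0\}$ is a tail event of the trajectory and its conditional probability given $\mathcal F_n$ is $q$-like. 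Applying Lévy's $0$–$1$ law, $\mathbb P(\mathcal C > 0 \mid \mathcal F_n) \to \mathbf 1_{\{\mathcal C > 0\}}$ a.s.; since this conditional probability stays $\ge \delta$ on the non-extinction event (where eventually $|Z_n|\ge r$), the limit cannot be $0$ there, so $\mathcal C > 0$ almost surely on $\{Z_n \ne 0\ \forall n\}$. This gives $\{\mathcal C = 0\} \subseteq \{\exists n,\, Z_n = 0\}$ up to a negligible set, completing the proof.

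The main obstacle I anticipate is the supermartingale construction in the concave, possibly non-smooth setting: making precise the "tangent hyperplane at $z^*$" inequality $\langle \mathfrak M(z), h\rangle \le \lambda^* \langle z, h\rangle$ with a fixed linear functional $h$ valid for all $z \in \mathbb R_+^p$ (not merely near $z^*$) requires exploiting concavity and positive homogeneity of $\mathfrak M$ together with the Krause concave Perron–Frobenius structure from Section~\ref{sec:krause}, and then carefully handling the regime where $|Z_n|$ is small (where Jensen gives only $\mathbb E(Z_{n+1}\mid \mathcal F_n) \le \mathfrak M(Z_n)$ with a genuine gap, and where $\mathfrak M$ might fail to dominate the true conditional mean). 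A secondary subtlety is that $\mathcal C$ must be shown to be the \emph{same} random variable regardless of which functional $h$ one uses to define the supermartingale — this is where the directional convergence from Theorem~\ref{thm:integrabilitycriterion} is essential, pinning down $Z_n/(\lambda^*)^n$ to lie asymptotically on the ray $\mathbb R_+ z^*$.
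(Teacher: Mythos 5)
The overall architecture you lay out (supermartingale for $Z_n/(\lambda^*)^n$, directional pinning via Theorem~\ref{thm:integrabilitycriterion}, a restart argument for $\{\mathcal C=0\}$) is the right one, but there are genuine gaps, and the central device of the paper's proof is missed.

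\textbf{The supermartingale.} You try to build a supermartingale out of a \emph{linear} functional $\langle Z_n,h\rangle/(\lambda^*)^n$ and anticipate (correctly) that making $\langle\mathfrak M(z),h\rangle\le\lambda^*\langle z,h\rangle$ hold for all $z\in\mathbb R_+^p$ is a nontrivial obstacle: one would have to pick a subgradient linearization $L$ of the concave $\mathfrak M$ at $z^*$, then run Perron--Frobenius on $L$ to produce a nonnegative right eigenvector $h$. The paper avoids this entirely: it uses the \emph{concave} eigenfunction $\mathcal P$ from Krause's theorem (Theorem~\ref{thm:eigenvalue}(2)), which satisfies $\mathcal P\circ\mathfrak M=\lambda^*\mathcal P$ exactly, is concave, increasing and positively homogeneous, and therefore $C_n=\mathcal P(Z_n)/(\lambda^*)^n$ is a nonnegative supermartingale directly from Lemma~\ref{lem:domM} and Jensen. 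Note also that your worry about the ``small $|Z_n|$ regime (where $\mathfrak M$ might fail to dominate the conditional mean)'' is unfounded: Lemma~\ref{lem:domM} gives $\mathbb E(Z_{n+1}\mid\mathcal F_n)\le\mathfrak M(Z_n)$ for \emph{all} $Z_n\in\mathbb N^p$, not just for large ones, precisely because $\mathfrak M(z)=\sup_m \mathbb E(Z_1\mid Z_0=\lfloor mz\rfloor)/m$.

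\textbf{The $W_n$ convergence.} ``Applying the LLN of Theorem~\ref{thm:LGN1} conditionally'' is too loose: the reproduction variables form a triangular array (fresh $V^{(k,n+1)}$'s at each $n$), so to get almost sure convergence of $W_{n+1}/(\lambda^*)^{n+1}$ one needs a summability argument across generations. The paper truncates ($\hat S_{n,j}$), controls the variance of the martingale increments using that $\mathfrak M^n(z)/(\lambda^*)^n$ is bounded, invokes the martingale convergence theorem, and finishes with Borel--Cantelli to pass from $\hat S_n$ to $S_n$. Your sketch does not supply this.

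\textbf{The second part.} Your L\'evy $0$--$1$-law route is a genuinely different (and attractive) alternative to the paper's stopping-time/geometric-iteration argument, but it rests on the uniform lower bound $\inf_{|z|\ge r}\mathbb P(\mathcal C>0\mid Z_0=z)\ge\delta>0$, which you assert but do not prove. The paper obtains this via superadditivity (Lemma~\ref{lem:superaddi}): $\mathbb P(\mathcal C>0\mid Z_0=z)$ is increasing in $z$, and for $|z|\ge pr_0$ there is at least one $i$ with $z\ge r_0e_i$, so the infimum is bounded by $\min_i\mathbb P(\mathcal C>0\mid Z_0=r_0e_i)>0$. Without this ingredient the L\'evy argument does not close.
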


    \begin{rem}
        We observe that the transitivity assumption implies that the condition $\mathbb P(\mathcal C>0 \mid Z_0=z)>0$ for all $z\in \NN^p$ such that $q_z<1$ is actually true if and only if $\mathbb P(\mathcal C>0 \mid Z_0=z)>0$ for some $z\in \NN^p$.
    \end{rem}

    A natural question that arises is to find conditions so that the previous convergence holds also in $L^1$ and the limit is non-degenerate at $0$. The following proposition, proved in Section~\ref{sec:proofpropL1-condition}, deals with this question, for which we consider the function $\mathcal P: \RR_+^p \longrightarrow \RR_+^p$ given by
    \begin{equation}
    \label{eq:function_P}
    \forall z\in \RR^p_+,\:\mathcal P(z)=\lim_{n\to+\infty} \dfrac{|\mathfrak M^n(z)|}{(\lambda^*)^n},
    \end{equation}
    which is well defined, according to Theorem~\ref{thm:eigenvalue} in Section~\ref{sec:krause} below. The condition we present is inspired by the work of González and Molina in the single-type case~\cite{gonzalez1996limit} and Klebaner's article~\cite{klebaner1985limit}.

    \begin{prop}\label{porp:L1-condition}
    Assume that Assumption~\ref{asu:bigasu} holds, that $\mathfrak M$ is finite over $S$ and that $\lambda^*>1$. If there exists a concave monotone increasing function $U:\mathbb R_+ \longrightarrow \mathbb R_+$, such that for all y $\in\RR_+$,
        \begin{equation}
        \label{eq:L1_condition}
        \sup_{z\in \RR_+^p: \mathcal P(z)=y} \mathbb E(|\mathcal P (Z_1) - \mathcal P(\mathfrak M(\lfloor z\rfloor ))|\mid Z_0=\lfloor z\rfloor )\leq U(y),
        \end{equation}
        with $y\to\nicefrac{U(y)}{y}$ non-increasing and
        \[\int\limits_1^{+\infty} \dfrac{U(y)}{y^2}\,\mathrm dy<+\infty,\]
        then the convergence in Theorem~\ref{thm:type_profiles} is in $L^1$ and the random variable $\mathcal C$ is non-degenerate at $0$.
    \end{prop}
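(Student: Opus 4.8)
Throughout set $R_n:=\mathcal P(Z_n)/(\lambda^*)^n$; the plan is to show that under the stated hypotheses $(R_n)_{n\ge 0}$ is a nonnegative supermartingale converging both a.s.\ and in $L^1$ to the limit $\mathcal C$ of Theorem~\ref{thm:type_profiles}, and then to transfer this to the rescaled populations. First I would collect the analytic properties of $\mathcal P$: it is finite (Theorem~\ref{thm:eigenvalue}), positively $1$-homogeneous, concave and nondecreasing — each inherited from the corresponding property of $\mathfrak M$ via~\eqref{eq:function_P} (recall $\mathfrak M$ is nondecreasing since $\xi$, being superadditive with $\xi(0)=0$, is nondecreasing) — it satisfies $\mathcal P(z^*)=1$, and it obeys the eigen-relation $\mathcal P\circ\mathfrak M=\lambda^*\mathcal P$, which follows from $\mathcal P(\mathfrak M(z))=\lim_{n}|\mathfrak M^{n+1}(z)|/(\lambda^*)^n=\lambda^*\mathcal P(z)$. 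A crucial consequence is that $\mathcal P$ is comparable to the $1$-norm: $\mathcal P(z)\le\mathcal P(\onesp)\,|z|$ by monotonicity and homogeneity, and, since $S$ is the simplex with vertices $e_1,\dots,e_p$, concavity gives $\inf_S\mathcal P=\min_i\mathcal P(e_i)=:c_0$, with $c_0>0$: indeed $\mathcal P(e_i)=0$ would give, via $\mathfrak M^{n_0}$ with $n_0$ as in Proposition~\ref{prop:primitivity}, a strictly positive vector $w=\mathfrak M^{n_0}(e_i)$ with $\mathcal P(w)=(\lambda^*)^{n_0}\mathcal P(e_i)=0$, hence $\mathcal P\equiv0$ by monotonicity and homogeneity, contradicting $\mathcal P(z^*)=1$. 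Thus $c_0|z|\le\mathcal P(z)\le c_1|z|$ for constants $0<c_0\le c_1$.

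Next, by Corollary~\ref{cor:M_second_def} applied with $m=1$, $\EE(Z_{n+1}\mid\mathcal F_n)\le\mathfrak M(Z_n)$, so Jensen's inequality for the concave $\mathcal P$ together with its monotonicity gives $\EE(\mathcal P(Z_{n+1})\mid\mathcal F_n)\le\mathcal P(\mathfrak M(Z_n))=\lambda^*\mathcal P(Z_n)$, i.e.\ $(R_n)$ is a nonnegative supermartingale; since $Z_n/(\lambda^*)^n\to\mathcal Cz^*$ a.s.\ (Theorem~\ref{thm:type_profiles}), homogeneity and continuity of $\mathcal P$ give $R_n=\mathcal P(Z_n/(\lambda^*)^n)\to\mathcal C$ a.s. For the $L^1$ part, since $Z_n\in\NN^p$ and $\mathcal P\circ\mathfrak M=\lambda^*\mathcal P$ we have $R_{n+1}-R_n=(\lambda^*)^{-(n+1)}\bigl(\mathcal P(Z_{n+1})-\mathcal P(\mathfrak M(\lfloor Z_n\rfloor))\bigr)$; conditioning on $Z_n$ and applying~\eqref{eq:L1_condition} with $y=\mathcal P(Z_n)$ gives $\EE(|R_{n+1}-R_n|\mid\mathcal F_n)\le(\lambda^*)^{-(n+1)}U(\mathcal P(Z_n))$, and taking $\PP(\cdot\mid Z_0=z)$-expectation, then Jensen for the concave $U$, the supermartingale bound $\EE(\mathcal P(Z_n)\mid Z_0=z)\le(\lambda^*)^n\mathcal P(z)$, and monotonicity of $U$, yields
\[
\EE(|R_{n+1}-R_n|\mid Z_0=z)\le\frac{U\bigl((\lambda^*)^n\mathcal P(z)\bigr)}{(\lambda^*)^{n+1}}.
\]

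Summing over $n$ and using the substitution $y=(\lambda^*)^t\mathcal P(z)$, the series $\sum_n U((\lambda^*)^nc)/(\lambda^*)^{n+1}$ is, up to its first term, bounded by $\tfrac{c}{\lambda^*\ln\lambda^*}\int_c^{\infty}U(y)/y^2\,\mathrm dy$ — the comparison being legitimate because $y\mapsto U(y)/y$ is non-increasing — hence finite; therefore $(R_n)$ is Cauchy in $L^1$ and $R_n\to\mathcal C$ in $L^1$. Combined with the norm-comparability above, $\bigl(|Z_n|/(\lambda^*)^n\bigr)_n$ is uniformly integrable, so $Z_n/(\lambda^*)^n\to\mathcal Cz^*$ in $L^1$, and then $W_n/(\lambda^*)^n\to\tfrac1{\lambda^*}\mathcal Cz^*\VV$ in $L^1$ by a standard truncation argument for the random sums defining $W_n$, indexed by the uniformly integrable $Z_{n-1}/(\lambda^*)^{n-1}$. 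For non-degeneracy, before summing, the supermartingale property gives $0\le\EE(R_n\mid Z_0=z)-\EE(R_{n+1}\mid Z_0=z)\le U((\lambda^*)^n\mathcal P(z))/(\lambda^*)^{n+1}$, so $\EE(R_N\mid Z_0=z)\ge\mathcal P(z)-S(\mathcal P(z))$ for all $N$, where $S(c):=\sum_{n\ge0}U((\lambda^*)^nc)/(\lambda^*)^{n+1}$; the same comparison gives $S(c)/c\to0$ as $c\to\infty$ (here $\int_1^\infty U(y)/y^2\,\mathrm dy<\infty$ forces $U(y)/y\to0$). Hence for $|z|$ large enough $\EE(\mathcal C\mid Z_0=z)=\lim_N\EE(R_N\mid Z_0=z)\ge\mathcal P(z)/2>0$; such $z$ satisfies $q_z<1$ by Theorem~\ref{thm:ext} (as $\lambda^*>1$), and the remark following Theorem~\ref{thm:type_profiles} extends $\PP(\mathcal C>0\mid Z_0=z)>0$ to all $z$ with $q_z<1$, i.e.\ $\mathcal C$ is non-degenerate at $0$.

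The main obstacle I expect is performing the discrete/continuous comparison uniformly in the initial condition: the monotonicity of $y\mapsto U(y)/y$ is exactly what allows $\sum_n U((\lambda^*)^nc)/(\lambda^*)^{n+1}$ to be controlled by a multiple of $c\int_c^\infty U(y)/y^2\,\mathrm dy$, and it is the $o(c)$ behaviour of this quantity as $c\to\infty$ that simultaneously yields the $L^1$ convergence (summability at fixed $z$) and the non-degeneracy (a positive lower bound on $\EE(R_N\mid Z_0=z)$, uniform in $N$, for $|z|$ large). A secondary point needing care is the comparability $\mathcal P\asymp|\cdot|$, without which one cannot pass from the natural supermartingale $R_n$ to the genuine rescaled populations $Z_n$ and $W_n$.
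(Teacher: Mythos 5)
Your proof is correct, and it follows a genuinely different route than the paper's. The paper also reduces to the supermartingale $C_n=\mathcal P(Z_n)/(\lambda^*)^n$ and obtains the same Jensen bound $\EE(|C_{n+1}-C_n|\mid Z_0=z)\le(\lambda^*)^{-1}\EE(C_n\mid Z_0=z)\,U(\EE(\mathcal P(Z_n)\mid Z_0=z))/\EE(\mathcal P(Z_n)\mid Z_0=z)$; but instead of pushing one step further with the supermartingale bound $\EE(\mathcal P(Z_n)\mid Z_0=z)\le(\lambda^*)^n\mathcal P(z)$ as you do, the paper recasts this as a deterministic recursion $|a_{n+1}-a_n|\le a_n f(a_n m^n)$ and invokes Klebaner's Lemma~\ref{lem:klebaner_1984} to deduce both that $a_n=\EE(C_n\mid Z_0=z)$ converges and that its limit is positive once $\mathcal P(z)$ exceeds a threshold; the strong Markov property at the hitting time of $\{|z|\geq r_0\}$ then spreads positivity to all $z$ with $q_z<1$, and only afterwards, knowing the limit is strictly positive, is a summable increment bound (hence $L^1$ Cauchy) obtained. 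You instead get the summable bound $\EE(|C_{n+1}-C_n|\mid Z_0=z)\le U((\lambda^*)^n\mathcal P(z))/(\lambda^*)^{n+1}$ directly, conclude $L^1$ Cauchy immediately by the geometric-to-integral comparison (legitimate because $y\mapsto U(y)/y$ is non-increasing), and extract non-degeneracy from the same sum via $\EE(C_N\mid Z_0=z)\ge\mathcal P(z)-S(\mathcal P(z))$ with $S(c)=o(c)$, then invoke the remark after Theorem~\ref{thm:type_profiles} to cover all $z$ with $q_z<1$ — the paper achieves that last step by an explicit strong Markov argument, but the remark encapsulates the same mechanism. The upshot: your argument is shorter and self-contained (no appeal to Klebaner's lemma as a black box, at the modest cost of spelling out the dyadic-to-integral comparison), reverses the logical order (summability first, positivity second) in a way that is arguably cleaner, and relies on exactly the same structural ingredients (supermartingale, Jensen, concavity of $U$, monotonicity of $U(y)/y$). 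Both proofs also need the norm comparability $c_0|z|\le\mathcal P(z)\le c_1|z|$, which you establish carefully, to transfer the $L^1$ convergence from $C_n$ to $Z_n/(\lambda^*)^n$; the paper does this at the opening of Section~\ref{sec:supermartingale}.
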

    The existence of the function $U$ in the previous theorem may be difficult to check. In the following proposition we state sufficient conditions to ensure its existence, under a $V\log V$ condition and extra assumptions on the functions $\mathcal P, \xi$ and $\mathfrak M$. The proof of this proposition is in Section~\ref{sec:supermartingale_suff}.

    \begin{prop}\label{prop:suffconditions_convergence}
    Assume that $\mathfrak M$ is finite over $S$. In addition assume that both functions $\mathcal P$ and $\xi$ are Lipschitz, that $\mathbb E(V_{i,j}\log V_{i,j})<+\infty$ for all $i\in \{1,\dots ,p\}$, $j\in \{1,\dots,q\}$, and that there exists $\alpha>0$ such that
    \[\left|\dfrac{\xi(z\mathbb V)}{|z|} - \dfrac{\mathfrak M(z)}{|z|}\right|=O(|z|^{-\alpha}),\ \forall z\in\RR_+^p\setminus\{0\}.\]
    Then, the condition \eqref{eq:L1_condition} is satisfied.
    \end{prop}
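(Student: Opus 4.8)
The plan is to verify the hypothesis \eqref{eq:L1_condition} of Proposition~\ref{porp:L1-condition} by exhibiting an explicit function $U$ built from an $x\log x$ estimate. Fix $y\in\RR_+$ and $z\in\RR_+^p$ with $\mathcal P(z)=y$; write $z'=\lfloor z\rfloor$. By the triangle inequality, I would split
\[
\mathbb E\bigl(|\mathcal P(Z_1)-\mathcal P(\mathfrak M(z'))|\mid Z_0=z'\bigr)
\le \mathbb E\bigl(|\mathcal P(Z_1)-\mathcal P(\xi(z'\mathbb V))|\mid Z_0=z'\bigr)
+ |\mathcal P(\xi(z'\mathbb V))-\mathcal P(\mathfrak M(z'))|.
\]
For the second (deterministic) term, the Lipschitz property of $\mathcal P$ together with the assumed bound $|\xi(z\mathbb V)/|z| - \mathfrak M(z)/|z|| = O(|z|^{-\alpha})$ gives a bound of order $|z'|^{1-\alpha}$; since $\mathcal P$ is Lipschitz and positively homogeneous of degree one (it is the limit of the $1$-norm of iterates of the positively homogeneous $\mathfrak M$, rescaled), one has $|z|\asymp \mathcal P(z)=y$ on the relevant set, so this term is $O(y^{1-\alpha})=O(y\cdot y^{-\alpha})$.

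For the first term, I would use that $\mathcal P$ is Lipschitz, say with constant $L$, to bound it by $L\,\mathbb E(|Z_1-\xi(z'\mathbb V)|\mid Z_0=z')$. Now $Z_1=\xi(W_1)$ and $\xi$ is Lipschitz with constant $L_\xi$, while $\EE(W_1\mid Z_0=z')=z'\mathbb V$, so this is at most $L L_\xi\,\mathbb E(|W_1-z'\mathbb V|\mid Z_0=z')$. Each coordinate $W_{1,j}-(z'\mathbb V)_j$ is a centered sum of $\sum_i z'_i$ independent terms $V^{(k,1)}_{i,j}-\VV_{i,j}$, so by a standard truncation/maximal inequality for sums of i.i.d.\ centered integrable variables — precisely the device underlying the $L\log L$ theory, e.g.\ bounding $\mathbb E|S_N - \mathbb E S_N|$ for $N$ i.i.d.\ copies of a variable $V$ with $\mathbb E(V\log^+ V)<\infty$ by a quantity of order $N\,\psi(N)$ where $\psi(N)\to 0$ and $N\mapsto N\psi(N)$ is concave increasing — one gets $\mathbb E(|W_{1,j}-(z'\mathbb V)_j|\mid Z_0=z')\le g(|z'|)$ for a concave increasing $g$ with $g(t)/t\to 0$. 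Summing over $j$ and using $|z'|\asymp y$ yields a bound $\tilde U(y)$ which is (up to constants) concave, increasing, with $\tilde U(y)/y$ non-increasing and $\int_1^\infty \tilde U(y)/y^2\,dy<\infty$ — the last integrability being exactly where the $V\log V$ hypothesis is needed, since $\int^\infty \psi(t)/t\,dt<\infty$ is equivalent to $\mathbb E(V\log^+ V)<\infty$ for the natural choice of $\psi$.

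Finally I would combine the two contributions: $U(y)=C\bigl(\tilde U(y)+y^{1-\alpha}\bigr)$ for a suitable constant $C$. One checks that a sum of two concave increasing functions is concave increasing, that $y\mapsto U(y)/y$ is non-increasing (each summand has this property, using $\alpha>0$ for the second), and that $\int_1^\infty U(y)/y^2\,dy<\infty$ term by term. This is the $U$ required in \eqref{eq:L1_condition}, completing the proof. The main obstacle is the $x\log x$ maximal estimate for $\mathbb E|S_N-\mathbb E S_N|$ with the right dependence on $N$ — getting a bound of the form $N\psi(N)$ with $\psi$ concave, decreasing to $0$, and $\int^\infty\psi(t)/t\,dt<\infty$ under $\mathbb E(V\log^+V)<\infty$ — together with the bookkeeping that keeps all the desired structural properties (concavity, $U(y)/y$ monotonicity, the integral test) intact through the Lipschitz compositions and the passage from $|z|$ to $y=\mathcal P(z)$; care is also needed at small $y$ (equivalently small initial populations, where $\lfloor z\rfloor$ may vanish), handled by enlarging $U$ by an additive constant.
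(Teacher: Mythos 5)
Your proposal follows essentially the same route as the paper: split off the deterministic $\xi(z\mathbb V)$ vs.\ $\mathfrak M(z)$ term using the rate hypothesis and Lipschitz continuity of $\mathcal P$, bound the stochastic term via Lipschitz continuity of $\xi$ and a truncated $L\log L$-type estimate for centered i.i.d.\ sums, use $\mathcal P(z)\asymp|z|$ to rewrite in terms of $y$, patch small $|z|$ with an additive constant, and finally upgrade the resulting bound to a concave monotone $U$. The two ``obstacles'' you flag are exactly the paper's Lemma~\ref{lem:Expect_LlogL} (the truncation estimate, proved by a variance/tail split at threshold $|z|^\beta$, $\beta\in(0,\nicefrac12)$) and Lemma~\ref{lem:Klebaner_1985} (Klebaner's regularization producing the concave majorant), so the argument is complete once those are supplied as in the paper.
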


    \begin{rem} Since originally $\xi$ is only defined over $\NN^q$, the statement ``$\xi$ is a Lipschitz function'' must be interpreted as ``there exists an extension of $\xi$ from $\NN^q$ to $\RR_+^q$, that is Lipschitz''.
    \end{rem}

    The previous conditions are not necessary conditions to ensure the existence of the function $U$ in \eqref{eq:L1_condition}. In fact, in Proposition~\ref{lem:cows_bulls}, we state that in the model of Example~\ref{exa:cows_bulls} below, for which there is no Lipschitz extension for $\xi$  over all $\RR_+^q$,  the $V\log V$ condition is sufficient to ensure the $L^1$ convergence to a non-degenerate random variable in \eqref{eq:convergence_Zn}.

\subsection{Examples in the context of bGWbp}
\label{sec:examples}
    The following examples are in the context of the multi-type bisexual Galton-Watson process. We recall that in this case $q=n_f+n_m$, where $n_f,n_m\in\NN^*$ are respectively the number of types for females and males. In order to be consistent with our notation, we write $\xi((x_1,\dots, x_{n_f}),(y_1,\dots,y_{n_m}))=\xi(x_1\dots,x_{n_f},y_1,\dots y_{n_m})$. In this context, for $i\in \{1,\dots p\}$ and $j\in \{1,\dots n_f\}$ we set $X_{i,j}=V_{i,j}$, and similarly, for $j\in \{1,\dots, n_m\}$, $Y_{i,j}=V_{i,n_f+j}$. 
    Finally, we define the matrices $\XX\in \RR^{p,n_f}$ and $\YY\in\RR^{p,n_m}$ given by
    \[\XX_{i,j} = \EE(X_{i,j}),\:\YY_{i,j}=\EE(Y_{i,j}).\]

    \begin{example}[Perfect fidelity mating function]
        Consider the case where $n_f=n_m=p$ and the mating function $\xi(x,y)=\min \{x,y\}:=\left(\min\{x_i,y_i\}\right)_{i\leq p}$, which is a natural extension of the perfect fidelity case presented by Daley (\cite{daley1968extinction}) to the multi-dimensional case.  In this case, we have
        \[
        \dfrac{\min\{kz\XX, kz\YY\}}{k}=\min\{z\XX, z\YY\},\ \forall k\geq 1.
        \]
         Hence the function $\mathfrak M$ takes the form $\mathfrak M(z)=\min\{z\XX, z\YY\}$. Let us discuss different particular instances of this model.

         \begin{itemize}
             \item  If $\XX\leq \YY$ are aperiodic irreducible non-negative matrices, then $\mathfrak M(z)=z\XX$ and so $\mathfrak M$ is a linear function and $\lambda^*$ is its greatest eigenvalue. In  the super-critical case (i.e. $\lambda^*>1$), the asymptotic profile of the types of the process in the non-extinction event is given by its positive left eigenvector. We thus observe that, despite the interaction between males and females, the extinction and growth characterization of the process is similar to the classical Galton-Watson case. 
            \item The case $\XX$ proportional with $\YY$ can be handled similarly: Let $(X_{i,j})_{1\leq i\leq p,1\leq j\leq p}$ and $(Y_{i,j})_{1\leq i\leq p,1\leq j\leq p}$ defined by
            \begin{align*}
            X_{i,j}=\sum_{\ell=1}^{U_{i,j}} \varepsilon_{i,j,\ell}\text{ and }Y_{i,j}=\sum_{i=\ell}^{U_{i,j}} (1-\varepsilon_{i,j,\ell}),
            \end{align*}
            where $(U_{i,j})_{1\leq i\leq p,1\leq j\leq p}$ is a random integrable array with mean $\mathbb U$ and $(\varepsilon_{i,j,\ell})_{1\leq i\leq p,1\leq j\leq p, \ell\in\NN}$ is an array of i.i.d. $\{0,1\}$ valued random variables independent from $U$.
            The variable $U_{i,j}$ describes the number of children of type $j$ from a mating unit of type $i$ and $\varepsilon_{i,j,\ell}$ determines if the $\ell$-th child is a female or a male. 
             Note that in this example $X_{i,j}$ and $Y_{i,j}$ are not independent. 
            Then, setting $\alpha=\mathbb P(\varepsilon_{i,j,\ell}=1)$, we have
            \begin{align*}
            \mathbb X=\alpha\mathbb U\text{ and }\mathbb Y =(1-\alpha)\mathbb U.
            \end{align*}
            As a consequence,
            \begin{align*}
            \mathfrak M(z)=\min\{\alpha,1-\alpha\}\,z\mathbb U
            \end{align*}
            and $\mathfrak M$ is a linear function.
            Assume now that $\mathbb U$ is an aperiodic irreducible non-negative matrix with greatest eigenvalue $\lambda_{\mathbb U}$ and positive left eigenvector $z_{\mathbb U}$. Then  $\lambda^*=\min\{\alpha,1-\alpha\}\lambda_{\mathbb U}$ and, in  the super-critical case, the asymptotic profile of the types of the process $(Z_n)_{n\in\NN}$ on the non-extinction event is given by $z^*=z_{\mathbb U}$.
            
            \item Let us now consider a non-linear case. Assume that $\mathbb X=\alpha \mathbf I_p+\beta \mathds{1}_p$ and $\mathbb Y=\alpha' \mathbf I_p+\beta' \mathds{1}_p$, where $\alpha,\alpha'\geq 0$ and $\beta,\beta'>0$ are  constants, $\mathbf I_p$ is the identity matrix of size $p\times p$ and  $\mathds{1}_p$ is the matrix of size $p\times p$ filled with ones. Then, for all $i\in\{1,\ldots,p\}$,
            \begin{align*}
            \mathfrak M_i(z)=\min\left\{\alpha z_i+\beta|z|,\alpha' z_i+\beta'|z|\right\}.
            \end{align*}
            Note that, for any permutation $\sigma$ of $\{1,\ldots,p\}$, we have
            \begin{align*}
            \mathfrak M(z_{\sigma(1)},\ldots,z_{\sigma(p)})=(\mathfrak M_{\sigma(1)}(z),\ldots,\mathfrak M_{\sigma(p)}(z)).
            \end{align*}
            Hence $z^*$ is stable by permutation of its components, so that $z^*=(\nicefrac1p,\ldots,\nicefrac1p)$. We deduce that
            \begin{align*}
            \lambda^*=|\mathfrak M(z^*)|=p\min\{\alpha/p+\beta,\alpha'/p+\beta'\}=\min\{\alpha+\beta p,\alpha'+\beta'p\}.
            \end{align*}
         \end{itemize}
    \end{example}

    \begin{rem} The previous example also covers a polygamous mating by one of the sexes, if we fix $d\in \NN$ and let $\xi(x,y) = \min \{x,dy\} = \left(\min \{x_i,dy_i\}\right)_{i\leq p}$, as Daley did \cite{daley1968extinction} in the single-type case. In this situation, we recover the same criterion.
    \end{rem}

    \begin{example}[Completely promiscuous mating function]
    \label{exa:cows_bulls}
        The case studied by Karlin and Kaplan~\cite{karlin1973criteria} corresponds to the case where the number of couples is equal to the number of females present in every generation (in particular this implies that $n_f=p$) given the condition that there is at least one male of each type present in every generation. In other words, they consider the mating function 
        \[
        \xi((x_1,\dots, x_p),(y_1,\dots , y_{n_m}))=(x_1,\dots ,x_p) \prod_{i=1}^{n_m} \mathds{1}_{\{y_i>0\}}.
        \]

        The function $\mathfrak M$ in this case corresponds to 
        \[
        \mathfrak M(z) =z\XX\mathds{1}_{\{z\YY>0\}}.
        \]
        We assume that $\XX$ is aperiodic irreducible and that $\forall j\leq n_m, \forall i\leq p\ : \YY_{i,j}>0$, (this last condition  ensures that Assumption~\ref{asu:bigasu} holds) then $z\YY>0$ for all $z\in \RR_+^p\setminus \{0\}$. In particular $\mathfrak M(z)=z\XX$ for all $z\in \RR_+^p$, which implies that the unique unitary  positive eigenvector of $\mathfrak M$ and its corresponding eigenvalue are the ones of $\XX$ given by the Perron-Frobenious Theorem. This result already appeared in~\cite{karlin1973criteria}.

    \end{example}
    
     In addition, Proposition~\ref{porp:L1-condition} entails the following original convergence property, proved in Section~\ref{sec:supermartingale_suff}. In particular, this model satisfies the conditions of Theorem~\ref{thm:type_profiles}.  As stated before, Example~\ref{exa:cows_bulls} does not satisfy the assumptions of Proposition~\ref{prop:suffconditions_convergence} since $\xi:\NN^q\to\NN^p$  is not Lipschitz.
    \begin{prop}\label{lem:cows_bulls} Consider the model in Example~\ref{exa:cows_bulls} above. Assume in addition that $\mathbb E(X_{i,j}\log X_{i,j})$ is finite for all $i,j \in \{1,\dots p\}$. Then, the rescaled process $\nicefrac{Z_n}{(\lambda^*)^n}$ converges almost surely and in $L^1$ to a non-degenerate random vector, with the same direction as $z^*$.
    \end{prop}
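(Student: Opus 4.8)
The plan is to apply Proposition~\ref{porp:L1-condition} to the model of Example~\ref{exa:cows_bulls}, where $\mathfrak M(z)=z\XX$ is linear (since $\YY$ has strictly positive entries, so $z\YY>0$ for every $z\in\RR_+^p\setminus\{0\}$). In the linear case, the iterates $\mathfrak M^n(z)=z\XX^n$ are governed by Perron--Frobenius theory: $\mathcal P(z)=\lim_n |z\XX^n|/(\lambda^*)^n=c\,\langle z,v\rangle$ for the right Perron eigenvector $v$ of $\XX$ (suitably normalized), so $\mathcal P$ is a \emph{linear} functional, $\mathcal P(z)=\langle z,w\rangle$ with $w>0$. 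Crucially, $\mathcal P(\mathfrak M(z))=\mathcal P(z\XX)=\langle z\XX,w\rangle=\lambda^*\langle z,w\rangle=\lambda^*\mathcal P(z)$, which is exactly the martingale-type identity we want; this is what lets us avoid the Lipschitz hypothesis on $\xi$ that fails here.

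The core estimate is then to bound, uniformly over $z\in\RR_+^p$ with $\mathcal P(z)=y$,
\[
\mathbb E\bigl(|\mathcal P(Z_1)-\mathcal P(\mathfrak M(\lfloor z\rfloor))|\mid Z_0=\lfloor z\rfloor\bigr)
=\mathbb E\bigl(|\langle Z_1,w\rangle-\lambda^*\mathcal P(\lfloor z\rfloor)|\mid Z_0=\lfloor z\rfloor\bigr).
\]
Since in this model $W_1=\xi(\lfloor z\rfloor \,?)$ — more precisely, given $Z_0=\lfloor z\rfloor$ we have $Z_1=W_1\,\mathds 1_{\{W_1\YY>0\}}$ where $W_1=\sum_{i=1}^p\sum_{k=1}^{\lfloor z\rfloor_i} X_i^{(k)}$ is a sum of $|\lfloor z\rfloor|$ independent integrable random vectors — one has $\langle Z_1,w\rangle=\langle W_1,w\rangle$ on the (overwhelmingly likely) event $\{W_1\YY>0\}$, while $\mathbb E(\langle W_1,w\rangle\mid Z_0=\lfloor z\rfloor)=\langle \lfloor z\rfloor\XX,w\rangle=\lambda^*\mathcal P(\lfloor z\rfloor)$. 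Thus the left-hand side splits into (a) the centered $L^1$ fluctuation of a sum of $n:=|\lfloor z\rfloor|$ i.i.d.\ summands, plus (b) a correction term from the (small) event $\{W_1\YY=0\}$. For (a), the standard device under the $X\log X$ moment condition — going back to Klebaner~\cite{klebaner1985limit} and used by Gonz\'alez--Molina~\cite{gonzalez1996limit} — gives a bound of the form $C\,n\,g(n)$ where $g$ decays to $0$ slowly (a truncation argument: split each summand at level $n$, control the truncated part's variance by $\sqrt{n\cdot n}=n$ via $L^2$, and the tail part's mean by $n\,\mathbb E(X\mathds 1_{X>n})$, which the $X\log X$ condition makes summable against $1/n$). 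Since $y=\mathcal P(\lfloor z\rfloor)\asymp |\lfloor z\rfloor|=n$ (both $w$ and $\XX$'s Perron structure give two-sided linear bounds), one gets a bound $U(y)$ with $U(y)/y$ non-increasing and $\int_1^\infty U(y)/y^2\,dy<\infty$; the prototypical choice is $U(y)\asymp y/\log^2 y$ or, after enlarging, a concave increasing majorant with the same integrability. For (b), $\mathbb P(W_1\YY=0\mid Z_0=\lfloor z\rfloor)\le \mathbb P(\exists i: (W_1)_i=0)$ decays geometrically in $n$ (each type-$j$ child is produced with positive probability from each parent type), so this term is $O(\rho^n)$, negligible and absorbed into $U$.

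After verifying \eqref{eq:L1_condition}, Proposition~\ref{porp:L1-condition} immediately yields that the convergence in Theorem~\ref{thm:type_profiles} holds in $L^1$ and that $\mathcal C$ is non-degenerate at $0$; combined with the almost sure convergence already furnished by Theorem~\ref{thm:type_profiles} and the identification of the limiting direction as $z^*$ (here the Perron eigenvector of $\XX$, cf.\ Example~\ref{exa:cows_bulls}), this gives the claimed statement. The main obstacle is step (a): carrying out the $X\log X$ truncation estimate for the $L^1$ fluctuation of $\langle W_1,w\rangle$ around its mean with a bound that is $o(n)$ and yields an admissible $U$ — this is where the $\mathbb E(X_{i,j}\log X_{i,j})<\infty$ hypothesis is used in an essential way, and where one must be careful that the implicit constants are uniform over the direction of $z$ (not just its magnitude), which is fine because there are only finitely many parent types and $w,\XX$ are fixed. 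A secondary point requiring care is checking that $\mathcal P(z)=\langle z,w\rangle$ really is the correct linear form and that $\mathcal P(Z_1)$ is honestly $\langle Z_1,w\rangle$ rather than something requiring the full nonlinear definition \eqref{eq:function_P} — but since $\mathfrak M$ is linear on all of $\RR_+^p$ in this example, \eqref{eq:function_P} collapses to the Perron limit and this is routine.
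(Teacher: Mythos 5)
Your strategy matches the paper's proof of this proposition: exploit linearity of $\mathfrak M$ in this model to identify $\mathcal P$ with the linear form $\langle\,\cdot\,,u^*\rangle$ (with $u^*$ the right Perron eigenvector of $\mathbb X$ normalized so that $\langle u^*,z^*\rangle=1$), split $\mathcal P(Z_1)-\lambda^*\mathcal P(z)$ into a centered-sum fluctuation plus a correction supported on the event that some male type is absent, bound the correction by a geometric $\gamma^{|z|-1}$ with $\gamma=\max_{i,\ell}\mathbb P(Y_{i,\ell}=0)<1$, and feed the resulting bound through Proposition~\ref{porp:L1-condition} after concavifying it with Lemma~\ref{lem:Klebaner_1985}.

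The one step that would fail as sketched is the truncation at level $n=|z|$. Bounding the truncated centered sum in $L^1$ via $L^2$ gives $\sqrt{n\,\mathbb E(X^2\mathds 1_{X\le n})}$; even with the refinement $\mathbb E(X^2\mathds 1_{X\le n})=O(n/\log n)$ that $\mathbb E(X\log X)<\infty$ permits, this is only $O(n/\sqrt{\log n})$, and $\int_1^\infty \frac{dy}{y\sqrt{\log y}}$ diverges, so the resulting $U$ fails the requirement $\int_1^\infty U(y)/y^2\,\mathrm dy<\infty$. The paper's Lemma~\ref{lem:Expect_LlogL} instead truncates at $|z|^\beta$ with $\beta\in(0,\nicefrac12)$: the truncated part then contributes only $|z|^{\beta+\nicefrac12}$, a strictly sublinear power that is trivially integrable against $1/y^2$, while the tail contributes $2|z|\sum_{i,j}\int_{|z|^\beta}^\infty x\,\mathrm dF_{i,j}(x)$, whose integral against $1/y^2$ is finite by Fubini precisely under $\mathbb E(X\log X)<\infty$. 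You correctly flagged this step as the delicate one; replacing your truncation level $|z|$ by $|z|^\beta$ with $\beta<\nicefrac12$ closes the gap and makes your argument coincide with the paper's proof essentially line for line.
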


\section{Characterization of $\mathfrak M$ and proof of Theorem~\ref{thm:LGN1}}
\label{sec:LLN}    
    
    Let $(Z_n)_{n\in \NN}$ be a multi-type bGWbp with superadditive mating function $\xi$ and consider the function $\mathfrak M$ associated to this process given by \eqref{eq:defM}. We start by stating and proving some properties related to this function in Section~\ref{sec:charM} and prove Theorem~\ref{thm:LGN1} in Section~\ref{sec:proofLGN1}.
    
\subsection{Characterization of $\mathfrak M$}
\label{sec:charM}
In this section we give some fundamental properties of the operator $\mathfrak M$ defined in Section~\ref{sec:main.results}, and relate it to the behaviour of the number of mating units in the population. We start by proving that $\mathfrak M$ is concave and positively homogeneous, then we prove that it does not depend on the chosen extension for $\xi$ to $\RR^q$ and  state first properties of this function.

\begin{dfn} A function $F:\RR_+^p \longrightarrow \RR_+^p$ is said to be 
\begin{enumerate}
\item Concave if 
\[F(\alpha x + (1-\alpha) y) \geq \alpha F(x) + (1-\alpha) F(y),\]
for all $\alpha \in [0,1]$ and all $x,y\in \RR_+^p$.
\item Positively homogeneous if for all $\alpha>0$, $F(\alpha x)=\alpha F(x)$ for all $x\in \RR_+^p.$
\item Primitive if there exists $n_0\geq 1$ such that $F^m(x) >0$ for all $m\geq n_0$ and $x\in \RR_+^p \setminus \{0\}.$
\end{enumerate}
\end{dfn}

\begin{prop}\label{prop:post_hom_concave} The function $\mathfrak M$ is positively homogeneous and concave. 
\end{prop}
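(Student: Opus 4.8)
\emph{Proof sketch.} The plan is to read off positive homogeneity directly from the definition of $\mathfrak M$, to observe that $\mathfrak M$ inherits superadditivity from $\xi$, and then to combine these two facts — a positively homogeneous superadditive map is automatically concave.

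\textbf{Positive homogeneity.} Fix $\alpha>0$ and $z\in\RR_+^p$. I would substitute $s=\alpha r$ in the defining limit (which exists coordinatewise in $(\RR_+\cup\{+\infty\})^p$ by Fekete's Lemma), obtaining
\[
\mathfrak M(\alpha z)=\lim_{r\to+\infty}\frac{\xi(r\alpha z\VV)}{r}=\alpha\lim_{r\to+\infty}\frac{\xi(\alpha r\, z\VV)}{\alpha r}=\alpha\lim_{s\to+\infty}\frac{\xi(s\, z\VV)}{s}=\alpha\,\mathfrak M(z),
\]
all identities understood coordinatewise in $(\RR_+\cup\{+\infty\})^p$. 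Together with $\mathfrak M(0)=0$ (immediate from $\xi(0)=0$), this gives positive homogeneity.

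\textbf{Superadditivity.} Let $x,y\in\RR_+^p$. The map $z\mapsto z\VV$ is linear and $\VV$ has nonnegative entries, so $r(x+y)\VV=rx\VV+ry\VV$ with both summands in $\RR_+^q$; applying the superadditivity \eqref{eq:superadditive} of the chosen extension of $\xi$ to $\RR_+^q$ (Remark~\ref{rem:extension.xi}) gives, componentwise,
\[
\xi\big(r(x+y)\VV\big)\ \geq\ \xi(rx\VV)+\xi(ry\VV),\qquad r\geq 1.
\]
Dividing by $r$ and letting $r\to+\infty$ on each side (each of the three terms converges, with limits possibly $+\infty$) yields $\mathfrak M(x+y)\geq\mathfrak M(x)+\mathfrak M(y)$; alternatively one may take $\sup_{r\geq1}$ on both sides and use the supremum form of \eqref{eq:defM}.

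\textbf{Conclusion and obstacle.} For $\alpha\in(0,1)$ and $x,y\in\RR_+^p$, superadditivity followed by positive homogeneity gives
\[
\mathfrak M(\alpha x+(1-\alpha)y)\ \geq\ \mathfrak M(\alpha x)+\mathfrak M((1-\alpha)y)\ =\ \alpha\,\mathfrak M(x)+(1-\alpha)\,\mathfrak M(y),
\]
while the boundary cases $\alpha\in\{0,1\}$ are trivial; hence $\mathfrak M$ is concave. I do not expect any genuine difficulty: the only points requiring (routine) care are that all the arithmetic is carried out in $(\RR_+\cup\{+\infty\})^p$, where addition, multiplication by positive scalars and passage to limits/suprema preserve the relevant inequalities, and that the extension of $\xi$ off $\NN^q$ stays superadditive.
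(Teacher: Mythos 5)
Your proposal is correct and follows essentially the same route as the paper: positive homogeneity via the change of variable $s=\alpha r$ in the defining limit, superadditivity inherited from $\xi$, and concavity as the standard consequence of these two properties. The paper states the second and third steps more tersely, but the underlying argument is identical.
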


\begin{proof}
Let $\alpha>0$, then 
\[\mathfrak M(\alpha z)=\lim\limits_{k\to +\infty} \dfrac{\xi (\alpha k z\VV)}{k}=\alpha \lim\limits_{k\to +\infty}\dfrac{\xi (\alpha k z \VV)}{\alpha k}=\alpha \mathfrak M(z),\]
and so $\mathfrak M$ is positively homogeneous. Using this and the fact that $\xi$ (and hence $\mathfrak M$) is a superadditive function, we deduce that $\mathfrak M$ is a concave mapping.
\end{proof}

\begin{prop}
\label{prop:M}
For all $z\in \RR_+^p$, we have
        \begin{equation}
        \label{eq:proof1}
        \mathfrak M(z)=\lim\limits_{r\to+\infty} \dfrac{\xi(\lfloor rz \VV \rfloor)}{r}=\sup_{r\geq 1} \dfrac{\xi(\lfloor rz\VV\rfloor)}{r}.
        \end{equation}

In addition, for any compact set $K\subset S$ such that $\mathfrak M$ is continuous on $K$, $\mathfrak M$ is either bounded on $K$ or infinite on $K$. In the former case,
\begin{align}
\label{eq:propMunifconveq1}
\sup_{z\in K} \left|\mathfrak M(z) -\dfrac{\xi(rz\VV)}{r}\right|\xrightarrow[r\to+\infty]{}0,
\end{align}
and, in the latter case,
\begin{align}
\label{eq:propMunifconveq2}
\inf_{z\in K} \dfrac{\xi(rz\VV)}{r}\xrightarrow[r\to+\infty]{}+\infty.
\end{align}
\end{prop}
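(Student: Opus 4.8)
The plan is to first reduce the statement about $\xi(\lfloor rz\VV\rfloor)$ to the one about $\xi(rz\VV)$ for the extension $x\mapsto \xi(\lfloor x\rfloor)$, and then to prove the dichotomy and uniform-convergence statements using concavity and a compactness argument.

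First I would establish \eqref{eq:proof1}. Fix the canonical superadditive extension $\tilde\xi(x):=\xi(\lfloor x\rfloor)$ of Remark~\ref{rem:extension.xi}. By \eqref{eq:defM} applied with this particular extension, $\mathfrak M(z)=\lim_{r\to\infty}\tilde\xi(rz\VV)/r=\sup_{r\ge 1}\tilde\xi(rz\VV)/r$, and the sequence along integers $r=k\in\NN$ already gives the limit by Fekete. Now $\tilde\xi(rz\VV)=\xi(\lfloor rz\VV\rfloor)$ by definition, so for the integer subsequence this is exactly \eqref{eq:proof1}; that the full limit over real $r\to\infty$ coincides follows because for $r\in[k,k+1)$ superadditivity and positivity give $\xi(\lfloor kz\VV\rfloor)\le\xi(\lfloor rz\VV\rfloor)$ componentwise... wait, this monotonicity is not automatic, so instead I would simply invoke Fekete's lemma directly for the (componentwise) superadditive family $r\mapsto\xi(\lfloor rz\VV\rfloor)$, whose superadditivity over $\NN$ is inherited from that of $\xi$; the real-variable limit then equals the $\NN$-limit by a standard sandwiching of $r$ between consecutive integers together with $\xi(\lfloor rz\VV\rfloor)\ge\xi(\lfloor\lfloor r\rfloor z\VV\rfloor)$ (superadditivity with the nonnegative increment $\lfloor rz\VV\rfloor-\lfloor\lfloor r\rfloor z\VV\rfloor$) divided by $r\sim\lfloor r\rfloor$. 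Finally, \eqref{eq:defM} shows the result is independent of the chosen extension since the right-hand side of \eqref{eq:proof1} makes no reference to it, proving the claim of Remark~\ref{rem:extension.xi} as a byproduct.

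For the dichotomy on a compact $K\subset S$ on which $\mathfrak M$ is continuous: by Proposition~\ref{prop:post_hom_concave}, $\mathfrak M$ is concave and positively homogeneous; a concave finite function on a neighbourhood of a point is locally bounded, so if $\mathfrak M(z_0)<\infty$ for one $z_0\in K$ then continuity forces $\mathfrak M$ to be finite, hence (by continuity on the compact $K$) bounded on all of $K$. Conversely if $\mathfrak M(z_0)=+\infty$ in some component for some $z_0\in K$, the continuity hypothesis makes that component $+\infty$ on all of $K$; this is the ``infinite on $K$'' case. For the uniform convergence \eqref{eq:propMunifconveq1} in the bounded case, I would use a Dini-type argument: for each $z$, $r\mapsto\xi(rz\VV)/r$ increases to $\mathfrak M(z)$ along $\NN$ (superadditivity) and the limit is continuous on the compact $K$; the functions $z\mapsto\xi(rz\VV)/r$ are lower semicontinuous... but they may fail continuity because of the floor. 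The fix is to replace each by a genuinely continuous minorant, or better: use that $\xi(rz\VV)/r=\xi(\lfloor rz\VV\rfloor)/r$, bound $\xi(\lfloor rz\VV\rfloor)\ge\xi(\lfloor r'z\VV\rfloor)$ for a slightly smaller rational direction, and combine concavity of $\mathfrak M$ with a finite $\varepsilon$-net of $K$: near each net point $z_i$ pick $r_i$ with $\xi(r_iz_i\VV)/r_i$ within $\varepsilon$ of $\mathfrak M(z_i)$, then transport this lower bound to nearby $z$ via superadditivity of $\xi$ (writing $rz\VV$ as $r z_i\VV$ plus a small nonnegative remainder) and via (uniform) continuity of $\mathfrak M$ on $K$; the matching upper bound $\xi(rz\VV)/r\le\mathfrak M(z)$ is immediate from $\sup_{r\ge1}$ in \eqref{eq:proof1}. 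The infinite case \eqref{eq:propMunifconveq2} is the same argument keeping only the lower bounds: on the net, $\xi(r_iz_i\VV)/r_i\to\infty$, and superadditivity plus the net spreads this to a uniform lower bound tending to $+\infty$.

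I expect the main obstacle to be the discontinuity of $z\mapsto\xi(rz\VV)/r$ introduced by the floor function, which blocks a naive application of Dini's theorem; the resolution is to work with the $\varepsilon$-net of $K$ and exploit superadditivity to transfer lower bounds between nearby directions, using the continuity of the \emph{limit} $\mathfrak M$ (assumed on $K$) rather than of the approximants, together with the elementary fact that $\sup_{r}\xi(rz\VV)/r=\mathfrak M(z)$ already provides the upper bound for free.
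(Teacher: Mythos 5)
Your proposal has two genuine gaps, one in each half of the argument.

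For \eqref{eq:proof1}, recall that $\mathfrak M$ is defined in \eqref{eq:defM} via an \emph{arbitrary} superadditive extension of $\xi$ to $\RR_+^q$, and Proposition~\ref{prop:M} is precisely what Remark~\ref{rem:extension.xi} invokes to justify that this is unambiguous. You fix the canonical extension $\tilde\xi(x)=\xi(\lfloor x\rfloor)$ and read off \eqref{eq:proof1} for that particular choice, which is fine; but you then dismiss extension-independence by observing that the right-hand side of \eqref{eq:proof1} ``makes no reference to it''. That is true of the right-hand side but says nothing about whether $\lim_r\xi(rz\VV)/r$, computed with some \emph{other} extension $\xi$, coincides with it. The inequality $\lim_r\xi(\lfloor rz\VV\rfloor)/r\le\lim_r\xi(rz\VV)/r$ is immediate from monotonicity, but the reverse is the substantive point and needs an argument. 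The paper supplies it: pick $u\in\{0,1\}^q$ with $u_j=\mathds 1_{\{(z\VV)_j\neq 0\}}$ and $n\in\NN^*$ with $z\VV\ge u/n$; then $\lfloor(r+n)z\VV\rfloor\ge\lfloor rz\VV+u\rfloor\ge rz\VV$, hence $\xi(\lfloor(r+n)z\VV\rfloor)/(r+n)\ge\frac{r}{r+n}\,\xi(rz\VV)/r$ for any extension, and $r\to\infty$ gives the missing bound. Nothing in your write-up replaces this step.

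For \eqref{eq:propMunifconveq1}, the $\varepsilon$-net scheme breaks down exactly where you write ``$rz\VV$ as $rz_i\VV$ plus a small nonnegative remainder''. Both $z$ and $z_i$ lie on the unit simplex $S$, so $z\ge z_i$ componentwise forces $z=z_i$; for nearby but distinct $z$ the increment $r(z-z_i)\VV$ has negative components and superadditivity cannot be applied as you propose. The correct transfer is multiplicative, not additive: for $z$ close to $z_i$ one has $z\ge(1-\delta)z_i$, hence $\xi(rz\VV)\ge\xi(r(1-\delta)z_i\VV)$, and one then passes to the limit in $r$ using positive homogeneity $\mathfrak M((1-\delta)z_i)=(1-\delta)\mathfrak M(z_i)$. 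Once this fix is in place you are essentially rederiving the paper's argument, which instead proceeds by contradiction and is shorter: assume $\xi(r_nz_n\VV)/r_n\le\mathfrak M(z_n)-\varepsilon$ with $r_n\nearrow\infty$, extract $z_n\to z_\infty\in K$, use $z_n\ge(1-\delta)z_\infty$ and $\mathfrak M(z_n)\le\mathfrak M(z_\infty)+\varepsilon/2$ for $n$ large, let $n\to\infty$ to obtain $(1-\delta)\mathfrak M(z_\infty)\le\mathfrak M(z_\infty)-\varepsilon/2$, and send $\delta\to0$; the easy one-sided bound $\xi(rz\VV)/r\le\mathfrak M(z)$ is, as you note, free. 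The case \eqref{eq:propMunifconveq2} is analogous, and your sketch of it has the same ``nonnegative remainder'' flaw.
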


\begin{proof} For the first assertion consider $z\in \RR_+^p$, $u\in \{0,1\}^p$ given by $u_i=\mathds{1}_{\{(z\VV)_i\neq 0\}}$ and let $n\in \NN^*$ be such that $z\VV\geq \frac1n u$. Then, for all $r>0$,
        \[
        \dfrac{\xi(\lfloor (r+n)z\VV\rfloor)}{r+n}\geq \dfrac{\xi(\lfloor rz\VV +u \rfloor)}{r+n}
         \geq\dfrac{\xi(rz\VV)}{r}\dfrac{r}{r+n}.
        \]
        Taking the limit when $r\to +\infty$, we conclude that 
        \[
        \mathfrak M(z)\leq \lim\limits_{r\to+\infty} \dfrac{\xi(\lfloor rz \VV\rfloor)}{r}.
        \]
        The reverse inequality is direct using the fact that $\xi$ is non-decreasing in all its components, which concludes the proof of the first equality in~\eqref{eq:proof1}. The second equality is a consequence of Fekete's Lemma and the fact that $r\mapsto \xi(\lfloor rz \VV\rfloor)$ is superadditive.

        For the second part, let $K$ be a compact subset of $S$ such that $\mathfrak M$ is continuous on $K$. Note that, $\mathfrak M$ being continuous, it is either bounded or equal to $+\infty$ on $K$.
    
    We first consider the case where $\mathfrak M$ is bounded on $K$.   Since $\mathfrak M(z)\geq \frac{\xi(rz\VV)}{r}$ for all $z\in S$ and $r>0$, we only have to prove that
    \[
    \limsup_{r\to+\infty} \sup_{z\in K}\left(\mathfrak M(z)-\dfrac{\xi(rz\VV)}{r}\right)\leq 0.
    \]

    Assume the contrary. Then there exist $\varepsilon>0$ and two sequences $(z_n)_{n\in\mathbb N}\in K^{\mathbb N}$ and $(r_n)_{n\in\mathbb N}\in (0,+\infty)^{\mathbb N}$ such that $r_n\nearrow+\infty$ and
    \[
    \dfrac{\xi(r_nz_n\VV)}{r_n}\leq \mathfrak M(z_n)-\varepsilon.
    \]

    Since $K$ is compact, there exists, up to a subsequence, $z_\infty\in K$ such that $z_n\to z_\infty$. In particular, for all $\delta\in (0,1)$, there exists $n_{\delta,\varepsilon}$ such that, for all $n\geq n_{\delta,\varepsilon}$, $z_n\geq (1-\delta)z_\infty$ and $\mathfrak M(z_n)\leq \mathfrak M(z_\infty)+\varepsilon/2$ and hence
    \begin{align*}
    \dfrac{\xi\left((1-\delta)r_nz_\infty\VV\right)}{r_n}
    \leq \mathfrak M(z_\infty)-\varepsilon/2.
    \end{align*}

    By definition of $\mathfrak M$ and Proposition~\ref{prop:post_hom_concave}, the left hand side converges to $\mathfrak M((1-\delta)z_\infty)=(1-\delta)\mathfrak M(z_\infty)$ when $n\to+\infty$, and hence
    \begin{align*}
    (1-\delta)\mathfrak M(z_\infty)\leq \mathfrak M(z_\infty)-\varepsilon/2.
    \end{align*}

    Since this is true for all $\delta>0,\varepsilon>0$ and since $\mathfrak M(z_\infty)<+\infty$ by assumption, this is a contradiction. We thus proved~\eqref{eq:propMunifconveq1}.
    
    The proof of~\eqref{eq:propMunifconveq2} is similar. Assume that it does not hold true. Then there exist $A>0$ and two sequences $(z_n)_{n\in\mathbb N}\in K^{\mathbb N}$ and $(r_n)_{n\in\mathbb N}\in (0,+\infty)^{\mathbb N}$ such that $r_n\nearrow+\infty$ and
    \[
    \dfrac{\xi(r_nz_n\VV)}{r_n}\leq A.
    \] 

    This implies that, up to a subsequence, $z_n$ converges to $z_\infty\in K$ and that, for any $\delta\in (0,1)$,
    \begin{align*}
    (1-\delta)\mathfrak M(z_\infty)\leq A.
    \end{align*}
    But $\mathfrak M$ is equal to $+\infty$ on $K$ and hence we obtained $+\infty\leq A$, which is a contradiction. This concludes the proof of Proposition~\ref{prop:M}.
\end{proof}

As a consequence of Proposition~\ref{prop:M}, we have the following result in the case where $\mathfrak M$ is continuous.

\begin{corollary}
\label{cor:unifconv_M}
Assume $\mathfrak M$ is finite and continuous over $S$. Then, we have
\[\left|\dfrac{\mathfrak M(z)}{|z|}- \dfrac{\xi(z\mathbb V)}{|z|}\right|\xrightarrow[]{|z|\to +\infty}0.\]
\end{corollary}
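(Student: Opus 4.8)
The plan is to deduce Corollary~\ref{cor:unifconv_M} from the uniform convergence estimate~\eqref{eq:propMunifconveq1} in Proposition~\ref{prop:M} by exploiting the positive homogeneity of $\mathfrak M$ together with the superadditivity of $\xi$. Since $\mathfrak M$ is assumed finite and continuous on the compact set $S$, Proposition~\ref{prop:M} applies with $K=S$, giving $\sup_{z\in S}\left|\mathfrak M(z)-\tfrac{\xi(rz\mathbb V)}{r}\right|\to 0$ as $r\to+\infty$. The goal is to convert the parameter $r\to+\infty$ into the statement $|z|\to+\infty$ for $z\in\RR_+^p\setminus\{0\}$.

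First I would take an arbitrary $z\in\RR_+^p\setminus\{0\}$ and write it as $z=|z|\,\bar z$ with $\bar z:=z/|z|\in S$. Then, using positive homogeneity of $\mathfrak M$ (Proposition~\ref{prop:post_hom_concave}),
\[
\frac{\mathfrak M(z)}{|z|}=\mathfrak M(\bar z),\qquad\text{while}\qquad \frac{\xi(z\mathbb V)}{|z|}=\frac{\xi(|z|\,\bar z\,\mathbb V)}{|z|},
\]
so that
\[
\left|\frac{\mathfrak M(z)}{|z|}-\frac{\xi(z\mathbb V)}{|z|}\right|=\left|\mathfrak M(\bar z)-\frac{\xi(r\bar z\,\mathbb V)}{r}\right|\Bigg|_{r=|z|}\leq \sup_{y\in S}\left|\mathfrak M(y)-\frac{\xi(ry\,\mathbb V)}{r}\right|\Bigg|_{r=|z|}.
\]
Now I would simply let $|z|\to+\infty$: since the right-hand side is a function of $r=|z|$ alone and tends to $0$ as $r\to+\infty$ by~\eqref{eq:propMunifconveq1}, the left-hand side tends to $0$ uniformly in the direction $\bar z$, which is exactly the claimed convergence.

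The only mild subtlety — and the one point I would be careful about — is that in the estimate~\eqref{eq:propMunifconveq1} the parameter $r$ ranges over positive reals, whereas when we substitute $r=|z|$ with $z\in\NN^p$ we only hit integer (indeed rational, if $z$ is allowed in $\RR_+^p$) values of $r$; this is harmless since~\eqref{eq:propMunifconveq1} is a genuine limit over all real $r\to+\infty$, so the restriction to any subset of values of $r$ going to infinity still converges to $0$. One should also note that $\tfrac{\xi(z\mathbb V)}{|z|}$ here implicitly uses the chosen superadditive extension of $\xi$ to $\RR_+^q$ when $z\notin\NN^p$, but by Proposition~\ref{prop:M} the quantity $\mathfrak M$ — and the whole estimate — is independent of that choice, so there is nothing to check. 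There is no real obstacle: the corollary is a direct unwinding of homogeneity plus the already-established uniform convergence on $S$.
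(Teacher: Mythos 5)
Your proof is correct and is exactly the intended unwinding of Proposition~\ref{prop:M}: decompose $z=|z|\bar z$ with $\bar z\in S$, use positive homogeneity of $\mathfrak M$, and invoke the uniform estimate~\eqref{eq:propMunifconveq1} with $K=S$ (valid since $S$ is compact and $\mathfrak M$ is finite and continuous there). The paper does not spell this out — it states the corollary as an immediate consequence — and your argument is the natural one; the only tiny slip is the parenthetical claim that $r=|z|$ is rational for $z\in\RR_+^p$ (it can be any positive real), but this is irrelevant since, as you note, the estimate holds as a genuine limit over all real $r\to+\infty$.
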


The following lemma, used in the proof of the forthcoming Proposition~\ref{prop:primitivity} and Proposition~\ref{prop:anotherproponM}, relates the iterations of $\mathfrak M$ with the expectation of the process. It is proved in the Section~\ref{sec:proofLGN1}, after the proof of Theorem~\ref{thm:LGN1} and Corollary~\ref{cor:M_second_def}.  Note that the proof of this lemma is based on Corollary~\ref{cor:M_second_def}, however neither Theorem~\ref{thm:LGN1} nor Corollary~\ref{cor:M_second_def} makes use of Lemma~\ref{lem:domM},  Proposition~\ref{prop:primitivity} or Proposition~\ref{prop:anotherproponM}.

\begin{lem}
    \label{lem:domM}  For all $n\geq 0$ and all $z\in\mathbb N^p$, we have
    \begin{align*}
    \mathfrak M^n(z)\geq \EE(Z_n|Z_0=z),\ \forall n\in \NN.
    \end{align*}
\end{lem}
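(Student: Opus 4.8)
The plan is a straightforward induction on $n$, using Corollary~\ref{cor:M_second_def} for the base case and the Markov property together with the concavity of $\mathfrak M$ for the inductive step. For $n=0$ there is nothing to prove, since $\mathfrak M^{0}(z)=z=\EE(Z_0\mid Z_0=z)$. For $n=1$, I would specialize Corollary~\ref{cor:M_second_def} to $m=1$: since $z\in\NN^p$ we have $\lfloor z\rfloor=z$, and therefore $\mathfrak M(z)\geq \EE(Z_1\mid Z_0=z)$.

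For the inductive step, assume the bound holds at rank $n$. Writing $h(w):=\EE(Z_1\mid Z_0=w)$ for $w\in\NN^p$, the time-homogeneity and the Markov property of $(Z_n)_n$ give, through the tower property,
\[
\EE(Z_{n+1}\mid Z_0=z)=\EE\big(h(Z_n)\mid Z_0=z\big)\leq \EE\big(\mathfrak M(Z_n)\mid Z_0=z\big),
\]
where the inequality is the case $n=1$ applied pointwise at the random state $Z_n$. Next, $\mathfrak M$ is concave by Proposition~\ref{prop:post_hom_concave}, so Jensen's inequality yields $\EE\big(\mathfrak M(Z_n)\mid Z_0=z\big)\leq \mathfrak M\big(\EE(Z_n\mid Z_0=z)\big)$. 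Finally $\mathfrak M$ is non-decreasing in each coordinate — $\xi$ is non-decreasing because it is superadditive and non-negative, and $w\mapsto w\VV$ is order-preserving — so the induction hypothesis gives $\mathfrak M\big(\EE(Z_n\mid Z_0=z)\big)\leq \mathfrak M\big(\mathfrak M^{n}(z)\big)=\mathfrak M^{n+1}(z)$, which closes the induction.

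The one delicate point — and the only genuine obstacle — is that $\mathfrak M$, and hence $\EE(Z_n\mid Z_0=z)$ and $\mathfrak M^{n}(z)$, may have infinite components, so that both the application of Jensen and even the meaning of $\mathfrak M^{n+1}$ must be justified. I would handle this by extending $\mathfrak M$ to $(\RR_+\cup\{+\infty\})^p$ as the increasing limit of $\mathfrak M$ evaluated at the coordinatewise truncations of its argument, and by using that a non-negative, positively homogeneous, concave, non-decreasing function is an infimum of linear forms with non-negative coefficients; the bound $\EE\big(\mathfrak M(Z_n)\mid Z_0=z\big)\leq \mathfrak M\big(\EE(Z_n\mid Z_0=z)\big)$ then follows by exchanging the infimum with the expectation, which is valid in $[0,+\infty]$ with no integrability assumption, while monotonicity is visibly preserved by the extension. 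Whenever some coordinate of $\EE(Z_n\mid Z_0=z)$ equals $+\infty$ the corresponding inequality is vacuous, so that the whole content of the statement sits in the finite-valued case, where the three displayed inequalities above reduce to standard Markov-chain computations and the usual Jensen inequality.
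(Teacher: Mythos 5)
Your proof is correct and follows the same route as the paper: the base case $n=1$ is Corollary~\ref{cor:M_second_def} specialized to $m=1$, and the inductive step is exactly the chain Markov property $\to$ Jensen (concavity of $\mathfrak M$) $\to$ monotonicity of $\mathfrak M$ combined with the induction hypothesis. The only difference is that you flag and sketch a resolution of the extended-value subtlety (when $\mathfrak M$ or $\EE(Z_n\mid Z_0=z)$ has infinite components), which the paper passes over silently; that extra care is sound and does not change the substance of the argument.
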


The following result states that the primitivity of the bGWbp entails the primitivity of $\mathfrak{M}$.

\begin{prop}\label{prop:primitivity}  Assume that Assumption~\ref{asu:bigasu}.\ref{asu:primitive} holds, that is, $(Z_n)_{n\in \NN}$ is primitive, then $\mathfrak M$ is a primitive function. 
\end{prop}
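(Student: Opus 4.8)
The plan is to deduce the primitivity of the homogeneous concave operator $\mathfrak M$ from the primitivity of the branching process, using Lemma~\ref{lem:domM} as the bridge between iterates of $\mathfrak M$ and expectations of $Z_n$. Concretely, fix $i\in\{1,\dots,p\}$ and let $n_i,k_i\in\NN$ be given by Assumption~\ref{asu:bigasu}.\ref{asu:primitive}, so that $\EE(Z_m\mid Z_0=k_ie_i)>0$ (componentwise) for every $m\ge n_i$. By Lemma~\ref{lem:domM}, $\mathfrak M^m(k_ie_i)\ge \EE(Z_m\mid Z_0=k_ie_i)>0$ for all $m\ge n_i$. Then, since $\mathfrak M$ is positively homogeneous (Proposition~\ref{prop:post_hom_concave}), $\mathfrak M^m(e_i)=\tfrac1{k_i}\mathfrak M^m(k_ie_i)>0$ for all $m\ge n_i$. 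Set $N:=\max_{1\le i\le p} n_i$; then $\mathfrak M^m(e_i)>0$ for every $i$ and every $m\ge N$.

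Next I would bootstrap from the canonical vectors $e_i$ to arbitrary nonzero $z\in\RR_+^p$. Every such $z$ satisfies $z\ge c\,e_i$ for some $i$ with $z_i>0$ and $c=z_i>0$. Because $\xi$ is non-decreasing in each component, so is each finite-$r$ approximant $\xi(\lfloor rz\VV\rfloor)/r$ appearing in Proposition~\ref{prop:M}, hence $\mathfrak M$ itself is monotone: $z\ge c e_i$ implies $\mathfrak M(z)\ge \mathfrak M(c e_i)$, and inductively $\mathfrak M^m(z)\ge \mathfrak M^m(c e_i)=c\,\mathfrak M^m(e_i)>0$ for all $m\ge N$. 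Thus $\mathfrak M^m(z)>0$ for every $z\in\RR_+^p\setminus\{0\}$ and every $m\ge N$, which is exactly the definition of $\mathfrak M$ being primitive with $n_0=N$.

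The two ingredients that need a word of care are the monotonicity of $\mathfrak M$ and the passage $\mathfrak M^m(z)>0\Rightarrow\mathfrak M^{m+1}(z)>0$ for the induction on $m$ (i.e.\ that once all components are positive, applying $\mathfrak M$ once more keeps them positive). Monotonicity follows from the explicit representation $\mathfrak M(z)=\sup_{r\ge1}\xi(\lfloor rz\VV\rfloor)/r$ in Proposition~\ref{prop:M} together with the standing hypotheses that $\xi$ is non-decreasing and that $z\mapsto z\VV$ is non-decreasing (which uses $\VV\ge0$); for the inductive step, if $y:=\mathfrak M^m(z)>0$ then $y\ge c'\mathbf 1$ for some $c'>0$, and monotonicity plus homogeneity give $\mathfrak M(y)\ge c'\mathfrak M(\mathbf 1)$, so it suffices to know $\mathfrak M(\mathbf 1)>0$ — but $\mathbf 1\ge e_1$, so $\mathfrak M^N(\mathbf 1)>0$, and more simply one just works with exponents $m\ge N$ throughout and never needs exponents below $N$. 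I expect the main (mild) obstacle to be bookkeeping: making sure the single threshold $N=\max_i n_i$ works uniformly in $z$, which is handled cleanly by the reduction $z\ge c e_i$ above. No $L\log L$ or continuity hypotheses on $\mathfrak M$ are needed here.
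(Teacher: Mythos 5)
Your argument is correct and takes essentially the same route as the paper: both hinge on Lemma~\ref{lem:domM} to transfer the positivity of $\EE(Z_m\mid Z_0=k_ie_i)$ (from Assumption~\ref{asu:bigasu}.\ref{asu:primitive}) into positivity of $\mathfrak M^m$, and then extend from canonical directions to arbitrary $z$ by a superadditivity-type argument. The only cosmetic difference is that the paper bounds $\mathfrak M^m(kz)\geq\EE(Z_m\mid Z_0=kz)\geq\sum_{i}z_i\,\EE(Z_m\mid Z_0=ke_i)$ via superadditivity of $z\mapsto\EE(Z_m\mid Z_0=z)$, whereas you first obtain $\mathfrak M^m(e_i)>0$ and then invoke monotonicity and positive homogeneity of $\mathfrak M^m$ to cover all $z\in\RR_+^p\setminus\{0\}$.
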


\begin{proof}
Since $(Z_n)_{n\in \NN}$ is primitive, we can find $N,k\in \NN$ big enough so that for all $i\in \{1,\dots,p\}$ and $m\geq N$ we have that $\EE(Z_m|Z_0=ke_i)>0$. Hence, for $m\geq N$ and $z\in \NN^p\setminus\{0\}$, using Lemma~\ref{lem:domM} and the superadditivity of $\xi$ and then of $z\mapsto \EE(Z_m|Z_0=z)$,
       \[
        k\mathfrak M^m(z)=\mathfrak M^m(kz)\geq \EE(Z_m|Z_0=kz)\geq \sum_{i=1}^p z_i \EE(Z_m|Z_0=ke_i)>0,
        \]
       and so $\mathfrak M^m(z)>0$, which concludes the proof.
\end{proof}

We finish this subsection by stating one last property on $\mathfrak M$.

\begin{prop}
     \label{prop:anotherproponM}
     Assume that Assumption~\ref{asu:bigasu}.\ref{asu:primitive} holds. We have $\inf_{z\in S} |\mathfrak M(z)|>0$ and, for all compact subset $K\subset S^*$ and for all $i\in\{1,\ldots,p\}$,  $\inf_{z\in K} \mathfrak M_i(z)>0$. 
\end{prop}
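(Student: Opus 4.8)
The plan is to derive both claims from three properties of $\mathfrak M$ that are already at hand: $\mathfrak M$ is positively homogeneous (Proposition~\ref{prop:post_hom_concave}); $\mathfrak M$ is superadditive (as used in the proof of Proposition~\ref{prop:post_hom_concave}) and non-negative, hence non-decreasing for the coordinatewise order, i.e. $z'\geq z\Rightarrow \mathfrak M(z')=\mathfrak M(z+(z'-z))\geq \mathfrak M(z)$; and $\mathfrak M$ is primitive (Proposition~\ref{prop:primitivity}), i.e. there is $n_0$ with $\mathfrak M^m(w)>0$ componentwise for every $m\geq n_0$ and every $w\in\RR_+^p\setminus\{0\}$. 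I also record that $\mathfrak M(0)=0$ directly from \eqref{eq:defM}.

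From primitivity I would first extract two facts. First, $\mathfrak M(w)\neq 0$ for every $w\neq 0$: if $\mathfrak M(w)=0$ then, since $\mathfrak M(0)=0$, one gets $\mathfrak M^m(w)=0$ for all $m\geq 1$, contradicting $\mathfrak M^{n_0}(w)>0$. Second, no coordinate $\mathfrak M_i$ vanishes identically on $\RR_+^p$, for otherwise the $i$-th coordinate of $\mathfrak M^m(w)$ would be $0$ for all $m\geq 1$, again a contradiction; combining this with monotonicity, homogeneity and the fact that $z\leq \onesp$ for every $z\in S$ yields $\mathfrak M_i(\onesp)>0$ for every $i$ (if $\mathfrak M_i(\onesp)=0$ then $\mathfrak M_i\leq \mathfrak M_i(\onesp)=0$ on $S$, hence $\mathfrak M_i\equiv 0$ on $\RR_+^p$). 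Then for the first bound, given $z\in S$ I would pick $i^*$ with $z_{i^*}=\max_k z_k\geq 1/p$; since $z\geq z_{i^*}e_{i^*}$, monotonicity and homogeneity give $|\mathfrak M(z)|\geq z_{i^*}|\mathfrak M(e_{i^*})|\geq \tfrac1p\min_{1\leq i\leq p}|\mathfrak M(e_i)|>0$. For the second bound, a compact $K\subset S^*$ satisfies $\delta_K:=\min_{z\in K}\min_{1\leq j\leq p} z_j>0$ because $z\mapsto \min_j z_j$ is continuous and strictly positive on $S^*$; hence $z\geq \delta_K\onesp$ for $z\in K$, so $\mathfrak M_i(z)\geq \delta_K\mathfrak M_i(\onesp)>0$ uniformly over $z\in K$.

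The argument is short and elementary; the only point needing some care is that $\mathfrak M$ is a priori $(\RR_+\cup\{+\infty\})^p$-valued, so one must check that the displayed lower bounds remain valid when some components equal $+\infty$ — they do, since infinite values only enlarge the relevant infima — and that the monotonicity step is legitimate, which is why I would base it on superadditivity and non-negativity of $\mathfrak M$ rather than on any continuity or finiteness property. In particular no step uses finiteness of $\mathfrak M$ on $S$, consistently with the hypotheses of the proposition.
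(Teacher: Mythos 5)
Your proof is correct, and it takes a genuinely different (and arguably cleaner) route than the paper's. For the first bound, the paper uses concavity to write $\mathfrak M(z)=\mathfrak M\bigl(\sum_i z_i e_i\bigr)\geq\sum_i z_i\mathfrak M(e_i)$, which gives the sharper constant $\min_i|\mathfrak M(e_i)|$; you instead compare $z$ with $z_{i^*}e_{i^*}$ via monotonicity, paying a factor $1/p$ but avoiding any appeal to concavity. For the second bound, the paper splits into cases $\mathfrak M_i(z)=+\infty$ versus finite, uses local Lipschitzness of concave functions to get continuity of $\mathfrak M_i$ on $K$, then establishes the dichotomy ``$\mathfrak M_i$ is either null or positive on $S^*$'' by a two-point comparison and rules out the null case by primitivity. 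Your argument short-circuits all of that: the uniform bound $z\geq\delta_K\onesp$ for $z\in K$ combined with homogeneity directly gives $\mathfrak M_i(z)\geq\delta_K\mathfrak M_i(\onesp)$, and $\mathfrak M_i(\onesp)>0$ follows from primitivity exactly as you argue. This avoids both the continuity machinery and the finite/infinite case split, and as you correctly note, every inequality remains valid with values in $(\RR_+\cup\{+\infty\})^p$, consistent with the fact that the proposition does not assume $\mathfrak M$ finite on $S$. One small bookkeeping point: your fact (a) ($\mathfrak M(w)\neq 0$ for $w\neq 0$) is needed only to ensure $\min_i|\mathfrak M(e_i)|>0$ in the first bound, and your fact (b) only to ensure $\mathfrak M_i(\onesp)>0$ in the second; both are correctly deduced from primitivity via $\mathfrak M(0)=0$.
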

\begin{proof}
We start by proving the first assertion. Since $\mathfrak M$ is primitive by Proposition~\ref{prop:primitivity}, there exists $n_0\geq 1$ such that $
\mathfrak M^{n_0}(e_i)>0$ for all $i\in\{1,\ldots, p\}$. In particular, $\mathfrak M(e_i)\neq 0$ for all $i\in\{1,\ldots,p\}$. Using the concavity of $\mathfrak M$, we deduce that
\begin{align*}
\inf_{z\in S} |\mathfrak M(z)|\geq \inf_{z\in S}\left|\sum_{i=1}^p z_i \mathfrak M(e_i)\right|\geq \min_{i\in \{1,\ldots,p\}} |\mathfrak M(e_i)|>0.
\end{align*}
Let us now prove the second assertion. For $z\in K$, if $\mathfrak M_i(z)=+\infty$, by Proposition~\ref{prop:post_hom_concave}, $\mathfrak M_i(z')=+\infty$ for all $z' \in K$ and the result follows directly. If $\mathfrak M_i(z) <+\infty$, since $\mathfrak M_i$ is concave, it is locally Lipschitz on $S^*$ and hence $z\mapsto  \mathfrak M_i(z)$ is continuous on the compact set $K$. 
 It is thus sufficient to prove the result for any fixed $z\in S^*$. For this, we simply observe that, for any two $z,z'\in S^*$, we have, using the fact that $z\mapsto \mathfrak M_i(z)$ is positively homogeneous and increasing,
\begin{align*}
\mathfrak M_i(z) \geq \frac{\min_{j\in\{1,\ldots,p\}} z_j}{\max_{j\in\{1,\ldots,p\}} z'_j}  \mathfrak M_i(z').
\end{align*}
Hence $z\mapsto  \mathfrak M_i(z)$ is either null or positive on $S^*$. Since $\mathfrak M$ is primitive by Proposition~\ref{prop:primitivity}, $z\mapsto  \mathfrak M_i(z)$ is not null (take for instance $z=\mathfrak M^{n_0}(e_i)$), which concludes the proof.
\end{proof}

\subsection{Proof of Theorem~\ref{thm:LGN1}}
\label{sec:proofLGN1}
    We start with the following lemma, where we do not assume that $\mathfrak M$ is finite over $S$.
    \begin{lem}\label{lem: M(z)} Let $(z_k)_{k\geq 0}$ be a random sequence in $\NN^p$ such that $z_k\sim_{k\to+\infty} kz_\infty\in\RR_+^p$ almost surely. 
    We have 
    \[
    \dfrac{1}{k}\xi\left(\sum_{i=1}^p\sum_{m=1}^{z_{k,i}}V_{i,\cdot}^{(m)}\right)\longrightarrow \mathfrak M(z_\infty),
    \]
    almost surely when $k\to +\infty.$
    \end{lem}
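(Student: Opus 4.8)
\textbf{Proof strategy for Lemma~\ref{lem: M(z)}.}
The plan is to show that the random argument $\sum_{i=1}^p\sum_{m=1}^{z_{k,i}}V_{i,\cdot}^{(m)}$ is, up to lower-order fluctuations, close to $k\,z_\infty\mathbb V$, and then to transfer this closeness through $\xi$ using superadditivity and monotonicity together with the characterization of $\mathfrak M$ from Proposition~\ref{prop:M}. First I would invoke the strong law of large numbers componentwise: for each fixed $i$ the i.i.d.\ sequence $(V_{i,\cdot}^{(m)})_{m\geq 1}$ has mean $\mathbb V_{i,\cdot}$, so $\frac1{n}\sum_{m=1}^n V_{i,\cdot}^{(m)}\to \mathbb V_{i,\cdot}$ a.s., and since $z_{k,i}\sim k z_{\infty,i}$ a.s.\ (in particular $z_{k,i}\to\infty$ whenever $z_{\infty,i}>0$, and $z_{k,i}=o(k)$ when $z_{\infty,i}=0$), a standard random-index argument gives
\[
\frac1k\sum_{m=1}^{z_{k,i}} V_{i,\cdot}^{(m)} \xrightarrow[k\to\infty]{} z_{\infty,i}\,\mathbb V_{i,\cdot}\quad\text{a.s.},
\]
and hence $\frac1k\sum_{i=1}^p\sum_{m=1}^{z_{k,i}}V_{i,\cdot}^{(m)}\to z_\infty\mathbb V$ a.s. (When $z_{\infty,i}=0$ one must be a little careful: $z_{k,i}$ need not be bounded, but $z_{k,i}/k\to 0$, and $\frac1{z_{k,i}}\sum_{m=1}^{z_{k,i}}V_{i,\cdot}^{(m)}$ stays bounded a.s.\ along the relevant subsequence, so the contribution divided by $k$ still vanishes; if $z_{k,i}$ stays bounded the contribution is trivially $O(1)/k\to 0$.)

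Next I would sandwich. Fix $\varepsilon\in(0,1)$ and work on the almost sure event where the above convergence holds. Denote $A_k:=\sum_{i=1}^p\sum_{m=1}^{z_{k,i}}V_{i,\cdot}^{(m)}\in\NN^q$. For $k$ large enough we have $(1-\varepsilon)k\,z_\infty\mathbb V\le A_k\le (1+\varepsilon)k\,z_\infty\mathbb V$ coordinatewise. Since $\xi$ is non-decreasing in each component (superadditivity together with $\xi\geq 0$ forces monotonicity: $\xi(x+h)\ge\xi(x)+\xi(h)\ge\xi(x)$ for $h\ge 0$), we get
\[
\frac1k\,\xi\bigl(\lfloor (1-\varepsilon)k z_\infty\mathbb V\rfloor\bigr)\;\le\;\frac1k\,\xi(A_k)\;\le\;\frac1k\,\xi\bigl(\lfloor (1+\varepsilon)k z_\infty\mathbb V\rfloor\bigr).
\]
By Proposition~\ref{prop:M} (applied with the point $z_\infty$), $\frac1k\,\xi(\lfloor k (1\pm\varepsilon) z_\infty\mathbb V\rfloor)\to (1\pm\varepsilon)\mathfrak M(z_\infty)$ as $k\to\infty$ — here I use positive homogeneity of $\mathfrak M$ (Proposition~\ref{prop:post_hom_concave}) to rewrite $\mathfrak M((1\pm\varepsilon)z_\infty)=(1\pm\varepsilon)\mathfrak M(z_\infty)$; note this is valid, and the limits are finite or $+\infty$ consistently, without assuming $\mathfrak M$ finite on $S$. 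Taking $\liminf$ and $\limsup$ in $k$ and then letting $\varepsilon\downarrow 0$ gives $\frac1k\xi(A_k)\to\mathfrak M(z_\infty)$ a.s., which is the claim. (If some component of $\mathfrak M(z_\infty)$ is $+\infty$, the lower bound already forces that component of $\frac1k\xi(A_k)$ to diverge, and the statement is interpreted in $(\RR_+\cup\{+\infty\})^p$.)

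\textbf{Main obstacle.} The one genuinely delicate point is the random-index law of large numbers when some $z_{\infty,i}=0$ while $z_{k,i}$ may still tend to infinity along a random subsequence: one cannot simply write $\frac1k\sum_{m\le z_{k,i}}V_{i,\cdot}^{(m)}=\frac{z_{k,i}}{k}\cdot\frac1{z_{k,i}}\sum_{m\le z_{k,i}}V_{i,\cdot}^{(m)}$ and pass to the limit naively, because the running average is evaluated at a random time. The clean fix is to dominate: on the a.s.\ event where $n^{-1}\sum_{m\le n}V_{i,\cdot}^{(m)}\to\mathbb V_{i,\cdot}$, the sequence $\sup_{n\ge1} n^{-1}\sum_{m\le n}V_{i,\cdot}^{(m)}$ is a finite random constant $C_i<\infty$, so $\frac1k\sum_{m\le z_{k,i}}V_{i,\cdot}^{(m)}\le C_i\,\frac{z_{k,i}}{k}\to 0$. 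This same domination also streamlines the case $z_{\infty,i}>0$. Everything else is routine given Propositions~\ref{prop:post_hom_concave} and~\ref{prop:M}.
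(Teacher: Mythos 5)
Your overall strategy — SLLN at random indices, sandwich the integer argument of $\xi$, push the sandwich through monotonicity, then invoke Proposition~\ref{prop:M} and positive homogeneity of $\mathfrak M$, finally let $\varepsilon\downarrow 0$ — is the same as the paper's, and the multiplicative $(1\pm\varepsilon)$ perturbation you use is equivalent to the paper's additive perturbation matrices $\VV_\pm^\varepsilon$ (restricted to indices with $\VV_{i,j}\neq 0$). So the shape of the argument is fine. Your observation that superadditivity plus non-negativity forces monotonicity of $\xi$ is also correct.

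The gap is in how you read the hypothesis when $z_{\infty,i}=0$. You interpret $z_k\sim kz_\infty$ as $z_{k,i}=o(k)$ in those coordinates and explicitly allow $z_{k,i}\to\infty$ sublinearly; the paper instead reads it as $z_{k,i}=0$ for all $k$ large (a.s.). Under your weaker reading, the pointwise convergence $A_k/k\to z_\infty\VV$ that you establish does \emph{not} imply the coordinatewise sandwich $(1-\varepsilon)kz_\infty\VV\leq A_k\leq (1+\varepsilon)kz_\infty\VV$: in a coordinate $j$ with $(z_\infty\VV)_j=0$, the upper bound becomes $A_{k,j}\leq 0$, which fails as soon as $A_{k,j}>0$ — which your reading allows, since $z_{k,i}$ may grow unboundedly for an $i$ with $z_{\infty,i}=0$ and $\VV_{i,j}>0$. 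This is not a cosmetic issue: the lemma itself is \emph{false} under your interpretation. Take $p=q=2$, $\VV=\mathbf I_2$, $\xi(x,y)=(x(1+y),\,x+y)$ (superadditive, $\xi(0)=0$), $z_\infty=(1,0)$, $z_k=(k,\lfloor\sqrt k\rfloor)$. Then $\mathfrak M(z_\infty)=\lim_r\xi(r,0)/r=(1,1)$ is finite, yet $A_k\approx(k,\sqrt k)$ gives $\xi(A_k)_1/k\approx 1+\sqrt k\to+\infty$.

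The paper avoids this by taking $z_k\sim kz_\infty$ to mean $z_{k,i}=0$ eventually when $z_{\infty,i}=0$ (so the term for index $i$ drops out entirely), and additionally by restricting the sandwich to indices $(i,j)$ with $\VV_{i,j}\neq 0$, using that $V_{i,j}=0$ a.s.\ whenever $\VV_{i,j}=0$. That stronger convention is not only used in the lemma's proof; it is also what makes Step~1 of the induction in Theorem~\ref{thm:LGN1} close (there the authors deduce $Z^m_{1,\ell}=0$ eventually from $z_m\sim mz_\infty$ and $\mathfrak M_\ell(z_\infty)=0$, precisely this convention). Once you adopt the paper's reading, your multiplicative sandwich goes through as written and your domination trick $\sup_n n^{-1}\sum_{m\le n}V_{i,\cdot}^{(m)}<\infty$ a.s.\ is no longer needed, because the offending contribution is identically zero for $k$ large.
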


    The proof of this lemma is inspired by~\cite{daley1986bisexual}.

    \begin{proof}         
    Since all the variables $V_{i,j}$ are integrable, then thanks to the strong law of large numbers, we have that for all $i\leq p, \ j\leq q$, 
    \[\dfrac{1}{n}\sum_{m=1}^{n} V_{i,j}^{(m)} \xrightarrow[n\to+\infty]{a.s.} \mathbb V_{i,j}.\]
    
    Assume first that $z_{\infty,i}>0$. In this case, since $z_{k,i}\to+\infty$ almost surely we deduce that 
    \[
    \dfrac{1}{z_{k,i}}\sum_{m=1}^{z_{k,i}} V_{i,j}^{(m)} \xrightarrow[k\to+\infty]{a.s.} \VV_{i,j}
    \]
    and
    hence
    \[
    \dfrac{1}{kz_{\infty,i}}\sum_{m=1}^{z_{k,i}} V_{i,j}^{(m)} \xrightarrow[k\to+\infty]{a.s.} \VV_{i,j}.
    \]
    Fix $0<\varepsilon<\min\limits_{i,j/ \VV_{i,j}\neq 0} \VV_{i,j}$. Hence, with probability one there exists $k_0$ (random) such that if $k\geq k_0$, then 
    \[
    kz_{\infty,i}(\VV_{i,j}-\varepsilon)\leq \sum_{m=1}^{z_{k,i}} V_{i,j}^{(m)}\leq kz_{\infty,i}(\VV_{i,j}+\varepsilon).
    \]
    
    Assume now that $z_{\infty,i}=0$. Then, almost surely, there exists $k_0$ such that for all $k\geq k_0$, $z_{k,i}=0$, so that the last inequality also holds true.
    
    We consider again the general case $z_\infty\geq 0$. Summing on $i$ we obtain that, almost surely, there exists $k_0$ such that, for all $k\geq k_0$ and all $j\leq q$,
    \[
    \sum_{\substack{i=1\\ \mathbb V_{i,j}\neq 0}}^pkz_{\infty,i}(\VV_{i,j}-\varepsilon)\leq \sum_{i=1}^p \sum_{m=1}^{z_{k,i}}V_{i,j}^{(m)} \leq \sum_{\substack{i=1\\ \mathbb V_{i,j}\neq 0}}^pkz_{\infty,i}(\VV_{i,j}+\varepsilon),
    \]
    where we used the fact that $V_{i,j}^{(m)}=0$ almost surely if $\VV_{i,j}=0$.
    
    Define the matrices $\VV_{+}^{\varepsilon}:= \left((\VV_{i,j}+\varepsilon)\mathds{1}_{\VV_{i,j}\neq 0}\right)_{1 \leq i\leq p, 1\leq j\leq q}$ and $\mathbb{V}_{-}^{\varepsilon}:= \left((\VV_{i,j}-\varepsilon)\mathds{1}_{\VV_{i,j}\neq 0}\right)_{1\leq i\leq p,1\leq j\leq q}$. 
    Since the function $\xi$ is superadditive, in particular it is non decreasing. Hence, we get
    
    \begin{equation}
    \label{eq1}
    \dfrac{\xi(kz_\infty\VV_-^{\varepsilon})}{k}\leq \dfrac{1}{k}\xi\left(\left(\displaystyle\sum\limits_{i=1}^p\displaystyle\sum\limits_{m=1}^{z_{k,i}}V_{i,j}^{(m)}\right)_{1\leq j\leq q}\right)\leq \dfrac{\xi(kz_\infty\VV_+^{\varepsilon})}{k}. 
    \end{equation}
    
    Let define
    \[\delta = \dfrac{\varepsilon}{\min\limits_{i,j/ \VV_{i,j}\neq 0} \VV_{i,j}},\]
     and note that $\delta <1$ thanks to our choice of $\varepsilon$.
    
    Assume first that $\mathfrak M(z_\infty)<+\infty$ and note that
    \begin{align*}
    \limsup\limits_{k\to+\infty} \dfrac{\xi(kz_\infty\VV_+^{\varepsilon})}{k} 
    &= \limsup\limits_{k\to+\infty} \dfrac{1}{k} \xi\left(\left(\sum\limits_{\substack{i=1\\ 
            \VV_{i,j}\neq 0}}^p kz_{\infty,i}\left(1+\dfrac{\varepsilon}{\VV_{i,j}}\right)\VV_{i,j}\right)_{1\leq j\leq q}\right)\\
    &\leq \lim\limits_{k\to+\infty} \dfrac{1}{k}\xi \left(\left(\sum\limits_{i=1}^p kz_{\infty,i}(1+\delta)\VV_{i,j}\right)_{1\leq j\leq q}\right)\\
    &=(1+\delta)\mathfrak M(z_\infty),
    \end{align*}
     and similarly, $\liminf\limits_{k\to+\infty}\dfrac{\xi(kz_\infty\VV_-^{\varepsilon})}{k}\geq (1-\delta)\mathfrak M(z_\infty).$
    
    Hence, taking $k\to +\infty$ in \eqref{eq1}, we obtain 
    $$(1-\delta)\mathfrak M(z_\infty) \leq\liminf_{k\to+\infty}\dfrac{1}{k}\xi\left(\left(\displaystyle\sum\limits_{i=1}^p\displaystyle\sum\limits_{m=1}^{z_{k,i}}V_{i,j}^{(m)}\right)_{j=1}^{q}\right) \leq (1+\delta)\mathfrak M(z_\infty)$$
    
    Finally, taking $\varepsilon\to 0$, then $\delta$ goes to $0$ and we conclude the desired result when $\mathfrak M(z_\infty)<+\infty$. If $\mathfrak M_{\ell}(z_\infty)=+\infty$ for some $\ell \in \{1,\dots, p\}$, the inequality $\liminf\limits_{k\to+\infty}\dfrac{\xi_{\ell}(kz_\infty\VV_-^{\varepsilon})}{k}\geq (1-\delta)\mathfrak M_\ell(z_\infty)$ still holds and so the result follows in this case.
    \end{proof}

    We now proceed with the proof of Theorem~\ref{thm:LGN1}.

    \begin{proof}[Proof of Theorem~\ref{thm:LGN1}]


 We first prove the almost sure convergence in Step~1,  and then the $L^1$ convergence in Step~2.

    \medskip\noindent\textit{Step 1. Almost sure convergence.}
    The result is trivial for $n=0$. By Lemma~\ref{lem: M(z)}, we have 
    \begin{equation}
    \label{eq:useful-cor2}
    \frac{Z^m_1}{m}=\frac{1}{m}\xi\left(\sum_{i=1}^p\sum_{k=1}^{z_{m,i}}V_{i,\cdot}^{(1,k)}\right)
    \xrightarrow[m\to\infty]{a.s.} \mathfrak M(z_\infty),
    \end{equation}
    If $\mathfrak M_\ell(z_\infty)> 0$ for some $\ell\in \{1,\ldots,p\}$, then this proves that $Z^m_{1,\ell}\sim_{m\to+\infty} m\, \mathfrak M_\ell(z_\infty)$ almost surely.  If $\mathfrak M_\ell(z_\infty) = 0$, then  $\xi_\ell(kz_\infty\VV)$ vanishes for all $k\geq 1$ and hence $Z_{1,\ell}\mathds 1_{Z_0\leq Cz_\infty}=0$ almost surely  for all $C>0$.
    Since $z_m\sim_{m\to+\infty} mz_\infty$, we deduce that there exists a (random) $m_0\geq 1$ such that, for all $m\geq m_0$, $Z^m_{1,\ell}=0$.
 Thus we proved that $Z^m_{1}\sim_{m\to+\infty} m\,\mathfrak M(z_\infty)$ almost surely, which proves the result when $n=1$.

    Assume now that $Z^m_n\sim_{m\to+\infty} m \mathfrak M^{n}(z_\infty)$ a.s. for some $n\geq 1$. Then the previous step with $z_m=Z^m_n$ entails that
    \[
    Z^m_{n+1}
    \sim_{m\to\infty} m\,\mathfrak M(\mathfrak M^{n}(z_\infty))=m\,\mathfrak M^{n+1}(z_\infty) \qquad \text{a.s.}
    \]
    This concludes the proof of the first assertion in Theorem~\ref{thm:LGN1}.
    
    \medskip\noindent\textit{Step 2. Convergence in $L^1$.}
    We prove now the $L^1$-convergence. 
    Denote $\onesq\in \NN^q$, $\mathbf 1_q=(1,\dots,1)$, and fix $z_0\in \NN^p$ such that $z_0\VV\geq \mathbf 1_q$. 
    Consider the bGWbp with initial position $Z_0^m=z_m$, $m\geq 1$, and denote by $W_1^m$ the number of children in the first generation. We have $W_1^m\leq |W_1^m|\mathbf 1_q \leq |W_1^m|z_0\VV$, and so using the second equality in~\eqref{eq:defM},
    \[
    Z_1^m=\xi(W_1^m)
    \leq \xi\left(|W_1^m|z_0\VV \right)
    \leq \mathfrak M\left(|W_1^m|z_0\right).
    \]
    Using Proposition~\ref{prop:post_hom_concave}, we deduce that
    \begin{align}
    \label{eq:Zisbounded}
    Z_1^m&\leq |W_1^m|\mathfrak M(z_0).
    \end{align}
    By assumption, the random vector 
    \[
    U^{(m)}:= \lfloor z_m/m \rfloor+1
    \] 
    is uniformly integrable,  and we have $z_m\leq m U^{(m)}$ almost surely, so that
    \begin{align}
    \label{eq:ineqMFU}
    0\leq |W_1^m|\leq \left|\sum_{i=1}^p \sum_{k=1}^{mU^{(m)}_i} V_{i,\cdot}^{(k,1)}\right|.
    \end{align}

    Since $U^{(m)}$ is independent from the other terms, we have
    \begin{multline}
    \label{eq:decomp1}
    \mathbb E\left(\left|\frac{1}{m}\sum_{j=1}^q\sum_{i=1}^p\sum_{k=1}^{mU^{(m)}_i} \left(V_{i,j}^{(k,1)}-\VV_{i,j}\right)\right|\right)
    \\
    \leq
     \sum_{u\in (\NN\setminus\{0\})^p} \sum_{j=1}^{q}\sum_{i=1}^p  \frac{1}{mu_i}\mathbb E\left(\left|\sum_{k=1}^{m u_i} \left(V_{i,j}^{(k,1)}-\VV_{i,j}\right)\right|\right)\,|u|\,\mathbb P\left(U^{(m)}=u\right).
    \end{multline}
    
    Using the law of large numbers, we deduce that, for each $i\in\{1,\cdots,p\}$ and $j\in\{1,\cdots,q\}$,
    \[
    \frac{1}{m}\EE\left(\left|\sum_{k=1}^{m} \left(V_{i,j}^{(k,1)}-\VV_{i,j}\right)\right|\right)\xrightarrow[m\to+\infty]{}0.
    \]
     In particular,  this ensures that the family 
    \[
    f_{m}:=\max_{u\in (\mathbb N\setminus\{0\})^p} \sum_{j=1}^{q}\sum_{i=1}^p\frac{1}{mu_i}\mathbb E\left(\left|\sum_{k=1}^{m u_i} \left(V_{i,j}^{(k,1)}-\VV_{i,j}\right)\right|\right)
    \] 
    converges to $0$ when $m\to+\infty$. In addition, for all $A>0$, 
    \begin{multline*}
    \sum_{u\in (\NN\setminus\{0\})^p} \sum_{j=1}^{q}\sum_{i=1}^p  \frac{1}{mu_i}\mathbb E\left(\left|\sum_{k=1}^{m u_i} \left(V_{i,j}^{(k,1)}-\VV_{i,j}\right)\right|\right)\,|u|\,\mathbb P\left(U^{(m)}=u\right)\\
    \begin{aligned}
     &\leq 
    \sum_{\substack{u\in (\mathbb N\setminus \{0\})^p\\ |u|\leq A}} f_m |u|\,\mathbb P\left(U^{(m)}=u\right)+\sum_{\substack{u\in (\NN\setminus \{0\})^p\\|u|>A}} \max_{n\in \NN} f_n |u|\,\mathbb P\left(U^{(m)}=u\right)\\
    &\leq f_m\,A+ \mathbb E\left(\left|U^{(m)}\right|\mathds{1}_{|U^{(m)}|>A}\right)\max_{n\in\NN} f_n .
    \end{aligned}
    \end{multline*}
    
    Since the family $(U^{(m)})_{m\geq 0}$ is uniformly integrable and choosing $A$ large enough, $\mathbb E\left(\left|U^{(m)}\right|\mathds{1}_{|U^{(m)}|>A}\right)$ can be made arbitrarily small uniformly in $m$, and, for any fixed $A$, choosing $m$ large enough, the term $f_m\,A$ can be chosen arbitrarily small. Using~\eqref{eq:decomp1}, this implies that
    \begin{align*}
    \mathbb E\left(\left|\frac{1}{m}\sum_{j=1}^q\sum_{i=1}^p\sum_{k=1}^{mU^{(m)}_i} \left(V_{i,j}^{(k,1)}-\VV_{i,j}\right)\right|\right)\xrightarrow[m\to+\infty]{} 0.
    \end{align*}
    
    In particular, this shows that $\frac{1}{m}\sum_{j=1}^q\sum_{i=1}^p\sum_{k=1}^{mU^{(m)}_i} V_{i,j}^{(k,1)}$ converges in $L^1$ and is thus uniformly integrable. By inequalities~\eqref{eq:ineqMFU} and~\eqref{eq:Zisbounded}, this entails that $(Z^m_1/m)_{m\geq 1}$ is uniformly integrable too. Now, since we also proved that $Z^m_1/m$ converges almost surely to $\mathfrak M(z_\infty)$, this implies that $Z^m_1/m$ converges in $L^1$ to $\mathfrak M(z_\infty)$.
    
    As above, the result for general $n\geq 1$ derives by iteration, which concludes the proof of Theorem~\ref{thm:LGN1}.
\end{proof}

We now turn to the proof of Corollary~\ref{cor:M_second_def}.
\begin{proof}[Proof of Corollary~\ref{cor:M_second_def}]
Theorem~\ref{thm:LGN1} yields Corollary~\ref{cor:M_second_def} when $\mathfrak M$ takes finite values. In the situation where  $\mathfrak M$ is not finite valued, we consider the vector $\onesp=(1,\dots, 1)\in \NN^p$ and introduce the superadditive function 
\[\hat{\xi}(x)=|x|\onesp.\]
Then, for  all $\alpha\in \NN$, we define the superadditive mating function
\[\xi_{(\alpha)} (x)=\min \{\xi(x),\alpha \hat{\xi}(x)\}:=\left(\min \{\xi(x)_i,\alpha \hat{\xi}(x)_i\}\right)_{1\leq i \leq p},\]
and we denote by $\mathfrak M_{(\alpha)}$ the function associated if we consider $\xi_{(\alpha)}$ as mating function with the same offspring distribution as the original process. We can check that $\mathfrak M_{(\alpha)}(z) = \min\{\mathfrak M(z),\alpha \hat{\xi}(z\VV)\}$ and so we obtain that $\mathfrak M_{(\alpha)}(z) \nearrow \mathfrak M(z)$ as $\alpha \to +\infty$, for all $z\in \RR^p_+$.
Since clearly $\xi_{(\alpha)}\nearrow \xi$, using the Monotone Convergence Theorem, for all $z\in\NN^p$ and $i\in\{1,\ldots,p\}$,
\begin{equation}
\EE_{(\alpha)}(Z_1|Z_0=z)\xrightarrow[\alpha\to+\infty]{}\EE(Z_1|Z_0=z), \label{eq:convergencebis}
\end{equation}
where $\EE_{(\alpha)}$ is the probability law associated to the process with mating function $\xi_{(\alpha)}$.
In particular, using Corollary~2 for the finite valued $\mathfrak{M}_{(\alpha)}$, for all $z\in\RR_+^p$,
\begin{align*}
\mathfrak M(z)&=\sup_{\alpha>0} \mathfrak M_{(\alpha)}(z)
=\sup_{\alpha>0} \sup_{m\geq 1} \frac{\EE_{(\alpha)}(Z_1|Z_0=\lfloor mz\rfloor )}{m}\\
              &= \sup_{m\geq 1} \sup_{\alpha>0}\frac{\EE_{(\alpha)}(Z_1|Z_0=\lfloor mz\rfloor )}{m}=\sup_{m\geq 1} \frac{\EE(Z_1|Z_0=\lfloor mz\rfloor )}{m}.
\end{align*}
Since $m\mapsto \EE(Z_1|Z_0=\lfloor mz\rfloor )$ defines a superadditive sequence, we deduce that
\begin{align*}
\mathfrak M(z)&=\lim_{m\geq 1} \frac{\EE(Z_1|Z_0=\lfloor mz\rfloor )}{m},
\end{align*}
which concludes the proof of Corollary~\ref{cor:M_second_def}.
\end{proof}

 Let us now prove Lemma~\ref{lem:domM}, hence also concluding the proof of Propositions~\ref{prop:primitivity} and~\ref{prop:anotherproponM}.

\begin{proof}[Proof of Lemma~\ref{lem:domM}]
    Let $z\in \NN^p$.  For $n=1$, we use Corollary~\ref{cor:M_second_def} and obtain that
    \[\mathfrak M(z)=\lim\limits_{k\to\infty}\dfrac{\EE(Z_1|Z_0=kz)}{k}=\sup\limits_{k\in \NN} \dfrac{\EE(Z_1|Z_0=kz)}{k}\geq \EE(Z_1|Z_0=z).\]
    
    Assume now that the inequality is true for some $n\in \NN$. Using the fact that $\mathfrak M$ is increasing (since it is superadditive), we obtain
	\begin{align*}
    \mathfrak M^{n+1}(z)&\geq \mathfrak M(\EE(Z_n|Z_0=z))\\
    &\geq \EE(\mathfrak M(Z_n)|Z_0=z)\\
    &\geq \EE\left(\EE(Z_1|Z_0=z')_{\vert{z'=Z_n}}|Z_0=z\right)\\
    &=\EE(Z_{n+1}|Z_0=z),
    \end{align*}    
    where in the second step we have used Jensen's inequality, since $\mathfrak M$ is concave by Proposition~\ref{prop:post_hom_concave}, and the last inequality is due to the Markov property. The proof is then complete.
\end{proof}

\section{Existence of the eigenelements and proof of Theorem~\ref{thm:ext}}
 \label{sec:eigenpbl}
 
    \subsection{The concave eigenvalue problem}
    \label{sec:krause}
    
    Consider $A$ a real strictly positive $N\times N$ matrix. A well-known result that goes back to Perron \cite{perron1907theorie} states that 
    \[\lim_{n\to \infty} \dfrac{A^nx}{\lambda^n}=c(x)v,\ \forall x\in \RR_+^p,\]
    where $\lambda$ is the greatest eigenvalue of $A$ with $v$ its corresponding eigenvector and $c$ is a suitable function. This result and its consequences are among the main tools used to study the asymptotic behaviour of the classical multi-type Galton-Watson process, applied to the expectation matrix associated with the process. In this section we give similar results: a theorem that goes back to Ulrich Krause \cite{krause1994relative} that provides us with the necessary tools to study the extinction conditions for the multi-type bGWbp.
    
    \begin{thm}[See \cite{krause1994relative} Section~4]
        \label{thm:eigenvalue} Consider $M:\RR_+^p\longrightarrow \RR_+^p$ a concave, primitive and positively homogeneous mapping. Then,
        
    \begin{enumerate}
        \item The eigenvalue problem $M(z)=\lambda z$ has a unique solution $(\lambda^*, z^*) \in \RR\times S^*$, with $\lambda^*>0$. If $(\lambda, x) \in \RR\times \left(\RR_+^p\setminus \{0\}\right)$ is another solution of the problem, then it must hold that $x=rz^*$ for some $r>0$ and $\lambda=\lambda^*$.
        \item The function $L:(\RR_+)^p\longrightarrow \RR_+ z^*$ given by $L(x)=\lim\limits_{k\to \infty} \frac{M^{k}(x)}{(\lambda^*)^k}$ exists on $\RR_+^p$ and holds $L(x)=\mathcal P(x)z^*$ where $\mathcal P:\RR_+^p \longrightarrow \RR_+$ is a concave and positively homogeneous mapping with $\mathcal P(x)>0$ for all $x\in \RR_+^p\setminus \{0\}$.
        \item $\lim\limits_{k\to \infty} \frac{M^{k}(x)}{|M^{k}(x)|}=z^*$ for all $x\in \RR_+^p\setminus\{0\}$.
        \item $\lim\limits_{k\to\infty} \frac{|M^{k+1}(x)|}{|M^{k}(x)|}=\lambda^*=\lim\limits_{k\to\infty}|M^{k}(x)|^{\frac{1}{k}}$ for all $x\in \RR_+^p\setminus \{0\}$.
        \item\label{item:thmKrausev} The convergence toward $L(x)=\mathcal P(x)z^*$ is uniform on $x\in S$.
    \end{enumerate}
        
    \end{thm}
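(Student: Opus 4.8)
This statement is a concave Perron--Frobenius theorem, and the plan is to reconstruct it along the lines of Krause~\cite{krause1994relative}, in three stages: uniqueness of the eigenpair, its existence, and the ``relative stability'' items (2)--(5). The preliminary observation to keep in mind is that a positively homogeneous concave map $M:\RR_+^p\to\RR_+^p$ is automatically superadditive, $M(x+y)=2M(\tfrac{x+y}{2})\ge M(x)+M(y)$, hence monotone, since $M(x+h)\ge M(x)+M(h)\ge M(x)$ for $h\ge0$; moreover each iterate $M^{k}$ is again superadditive, monotone and positively homogeneous, and primitivity means that $M^{n_0}$ maps $\RR_+^p\setminus\{0\}$ into the open cone $(\RR_{>0})^{p}$, so in particular $M(e_i)\neq0$ for every $i$ and thus $M(z)\neq0$ for all $z\in S$.

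For uniqueness, I would take an eigenpair $M(z)=\lambda z$ with $z\in\RR_+^p\setminus\{0\}$; since $M^{n_0}(z)=\lambda^{n_0}z$ must be strictly positive, $z\in S^*$ after normalization and $\lambda>0$. Put $t^*=\sup\{t\ge0:\ z\ge t z^*\}\in(0,1]$; applying $M$ to $z\ge t^*z^*$ and using homogeneity gives $\lambda z\ge t^*\lambda^* z^*$, whence $\lambda\ge\lambda^*$, and the symmetric comparison gives $\lambda\le\lambda^*$, so $\lambda=\lambda^*$. If $t^*<1$ then $u:=z-t^*z^*\ge0$ is non-zero, so $M^{n_0}(u)>0$ strictly, and $(\lambda^*)^{n_0}z=M^{n_0}(z)\ge t^*(\lambda^*)^{n_0}z^*+M^{n_0}(u)\ge(t^*+\varepsilon)(\lambda^*)^{n_0}z^*$ for some $\varepsilon>0$, contradicting maximality of $t^*$; hence $t^*=1$ and $z=z^*$. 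Existence then follows by applying Brouwer's fixed point theorem to the normalized map $z\mapsto M^{n_0}(z)/|M^{n_0}(z)|$, which by primitivity sends $S$ into $S^*$ and is continuous on the relevant set, producing an eigenvector of $M^{n_0}$; homogeneity together with the uniqueness just proved (applied to $M^{n_0}$) promotes it to a vector $z^*\in S^*$ with $M(z^*)=\lambda^*z^*$ and $\lambda^*=|M(z^*)|>0$.

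For the stability items I would pass to the projective picture: equip the rays of $(\RR_{>0})^{p}$ with Hilbert's projective metric, which is complete on $S^*$ and for which the monotone homogeneous map $M$ acts non-expansively. The crux is to show that $M^{n_0}$ is a \emph{strict} contraction there, with a rate independent of the starting point; this is where concavity is genuinely used, together with the fact that $M^{n_0}(S)$ is a compact subset of the open cone and hence has finite projective diameter. Granting this, $M^{k}(x)/|M^{k}(x)|\to z^*$ for every $x\neq0$, uniformly on $S$, which gives (3) and the directional content of (5). For (2), introduce the optimal ratios
\[
\underline\ell_n(x)=\sup\{t\ge0:\ M^n(x)\ge t(\lambda^*)^n z^*\},\qquad \overline\ell_n(x)=\inf\{t\ge0:\ M^n(x)\le t(\lambda^*)^n z^*\},
\]
which are finite and positive for $n\ge n_0$; monotonicity and ray-homogeneity of $M$ make $(\underline\ell_n)$ non-decreasing and $(\overline\ell_n)$ non-increasing with $\underline\ell_n\le\overline\ell_n$, and the contraction forces $\overline\ell_n/\underline\ell_n\to1$, so both converge to a common value $\mathcal P(x)\ge\underline\ell_{n_0}(x)>0$, giving $M^n(x)/(\lambda^*)^n\to\mathcal P(x)z^*=:L(x)$. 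Here $\mathcal P(x)=\lim_n|M^n(x)|/(\lambda^*)^n$ inherits concavity and positive homogeneity from the maps $|M^n(\cdot)|/(\lambda^*)^n$ (each $M^n$ has concave components, by composition of the increasing concave $M_i$ with the concave $M$), and the case of $x$ on the boundary of $\RR_+^p$ reduces to the interior one via $M^{n_0}(x)\in(\RR_{>0})^p$. Item (4) follows from $|M^{k+1}(x)|/|M^{k}(x)|=|M(M^{k}(x)/|M^{k}(x)|)|\to|M(z^*)|=\lambda^*$, using continuity of $M$ at the interior point $z^*$, together with a Cesàro argument for $|M^{k}(x)|^{1/k}$; the uniformity on $S$ in (5) comes from the uniform contraction rate.

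The step I expect to be the main obstacle is precisely establishing that $M^{n_0}$ is a strict, uniform contraction for Hilbert's metric: for a \emph{nonlinear} concave map a finite-diameter image does not by itself yield contraction the way it does in Birkhoff's linear theorem, and one has to exploit concavity through a nonlinear Birkhoff--Hopf type estimate; making this quantitative is also what underlies the uniformity in (5). This is exactly the content of~\cite[Section~4]{krause1994relative}, which one may equally well simply quote.
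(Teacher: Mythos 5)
This theorem is stated in the paper without proof: it is imported directly from Krause~\cite{krause1994relative}, Section~4, and used as an external tool in the proofs of Theorems~\ref{thm:ext}, \ref{thm:integrabilitycriterion}, and~\ref{thm:type_profiles}. Your reconstruction is a sound sketch of the argument found in that source, and you correctly single out the genuinely delicate step --- a quantitative Birkhoff--Hopf type strict-contraction estimate for the nonlinear concave map $M^{n_0}$ in Hilbert's projective metric, where finite projective diameter of $M^{n_0}(S)$ does not by itself yield contraction the way it does in Birkhoff's linear theorem; your closing suggestion that one may simply quote Krause is exactly what the authors do.
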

    \subsection{Proof of Theorem \ref{thm:ext}}
\label{sec:proofMainTheorem}
If $\mathfrak{M}$ is finite, the existence of $\lambda^*$ and $z^*$ are guaranteed by Theorem~\ref{thm:eigenvalue}, Propositions~\ref{prop:post_hom_concave} and Proposition~\ref{prop:primitivity}. Note that, in the following of the proof, we make use of Theorem~\ref{thm:integrabilitycriterion} whose proof is developed in the next section and, except from the existence and uniqueness of $\lambda^*$ and $z^*$ which are yet established, does not use Theorem~\ref{thm:ext}.

If $\mathfrak M$ takes finite values and $\lambda^*\leq 1$, then,  by assertion $(2)$ in Theorem \ref{thm:eigenvalue},  for all $z\in \NN^p$, $(\mathfrak M^n(z))_{n\in \NN}$ is a bounded sequence.  
From Lemma~\ref{lem:domM}, $\mathfrak M^n(z) \geq \EE(Z_n|Z_0=z)$,  hence $\EE(Z_n|Z_0=z)$ is bounded for all $n\in \NN$ and so $Z_n$ does not converge to $+\infty$ with positive probability. The conclusion is then given by Assumption \ref{asu:bigasu}.\ref{asu:transience} since then $\lim\limits_{n\to \infty} |Z_n|$ can only be almost surely $0$, which finishes the proof of the theorem in the case $\lambda^*\leq 1$.
    
    If $\mathfrak M(z)<+\infty$ for all $z\in S$ and $\lambda^*>1$, Theorem~\ref{thm:integrabilitycriterion} entails that,  for all $\varepsilon \in (0,1)$, there exists $n_0\in \NN$ and $r>0$ such that, if $Z_0=z\in \NN^p$ with $|z|>r$, we have that with positive probability $Z_{n_0}\neq 0$ and $Z_n \geq (1-\varepsilon)^{n-n_0}\mathfrak M^{n-n_0}(Z_{n_0})$  for all $n\geq n_0$, with $(1-\varepsilon)^{n-n_0}\mathfrak M^{n-n_0}(Z_{n_0})\neq 0$ (since $\mathfrak M$ is primitive), and so we obtain $q_z<1$.    
    
 Assume now that there exist $z_0\in S$ and $i_0\in \{1,\dots ,p\}$ such that $(\mathfrak M(z_0))_{i_0}=+\infty$. 
 Consider, in the same way as for the proof of Corollary~\ref{cor:M_second_def}, the vector $\onesp=(1,\dots, 1)\in \NN^p$ and the function 
    \[\hat{\xi}(x)=|x|\onesp.\]
    
    For $\alpha\in \NN$ we define the function
    \[\xi_{(\alpha)} (x)=\min \{\xi(x),\alpha \hat{\xi}(x)\},\]
    which is superadditive, and we define $\mathfrak M_{(\alpha)}$ the function associated if we consider $\xi_{(\alpha)}$ as mating function with the same offspring distribution as the original process. We can check that $\mathfrak M_{(\alpha)}(z) = \min\{\mathfrak M(z),\alpha \hat{\xi}(z\VV)\}$ and so we obtain that $\mathfrak M_{(\alpha)}(z) \nearrow \mathfrak M(z)$ as $\alpha \to +\infty$, for all $z\in \RR^p_+$. Note that in particular 
    \begin{equation}
    \label{eq:divergence}
    \left(\mathfrak M_{(\alpha)}(z_0)\right)_{i_0}\xrightarrow[ ]{\alpha\to+\infty}+\infty.
    \end{equation}
    Since clearly $\xi_{(\alpha)}\nearrow \xi$, using the Monotone Convergence Theorem, for all $m\geq 1$ and $i\in\{1,\ldots,p\}$,
    \begin{equation}
    \EE_{(\alpha)}(Z_m|Z_0=ke_i)\xrightarrow[\alpha\to+\infty]{}\EE(Z_m|Z_0=ke_i). \label{eq:convergence}
    \end{equation}
    where $\EE_{(\alpha)}$ is the probability law associated to the process with mating function $\xi_{(\alpha)}$.
    
    By Assumption~\ref{asu:bigasu}.\ref{asu:primitive}, there exists $c_0>0,m\geq 1$ and $k\geq 1$ such that for all $i\in \{1,\dots,p\},$
    \[
    \EE(Z_m|Z_0=ke_i)\geq c_0 \onesp.
    \]
    By~\eqref{eq:convergence}, there exists $\alpha_0>0$ (which depends on $m$) such that for all $\alpha>\alpha_0$ and all $i\in \{1,\dots,p\}$,
    \[
    \EE_{(\alpha)}(Z_m|Z_0=ke_i)\geq \frac{c_0 \onesp}{2}\geq \dfrac{c_0}{2 \max\limits_{j\leq p} z_{0,j}} z_0.
    \]
    
    \noindent  This implies, by Lemma~\ref{lem:domM}, that for all $i\in \{1,\dots, p\}$,
    \[
    \mathfrak M_{(\alpha)}^{m}(e_i)\geq \dfrac{c_0}{2k \max\limits_{j\leq p} z_{0,j}}z_0.
    \]    
    Hence, by~\eqref{eq:divergence},
    \[
    \left(\mathfrak M^{m+1}_{(\alpha)}(e_i)\right)_{i_0} \geq \dfrac{c_0}{2k \max\limits_{j\leq p} z_{0,j}}(\mathfrak M_{(\alpha)}(z_0))_{i_0}\xrightarrow[ ]{\alpha\to +\infty} +\infty.
    \]
    This implies that
    \[
    \inf_{z\in S}\left(\mathfrak M^{m+1}_{(\alpha)}(z)\right)_{i_0}\xrightarrow[ ]{\alpha \to +\infty} +\infty.
    \]
    
    We remark that, since $\mathfrak M_{(\alpha)}$ is bounded over $S$, concave, positively homogeneous and primitive, there exists $\lambda_{\alpha}>0$ and $x_{\alpha}\in S$ such that $\mathfrak M_{(\alpha)}(x_{\alpha})=\lambda_{\alpha}x_{\alpha}$. Using this we have that 
    \[\lambda_{\alpha}^{m+1}=|\mathfrak M_{(\alpha)}^{m+1}(x_{\alpha})|\xrightarrow[ ]{\alpha\to +\infty} +\infty.\]
    
    We conclude that there exists $\alpha_0$ big enough such that $\lambda_{\alpha_0}>1$ and thanks to the previous computations the process with mating function $\xi_{\alpha_0}$ will not be almost surely extinct. Since $\xi_{(\alpha_0)}\leq \xi$, this process is stochastically dominated by the original process, and so we can find $r_{\alpha_0}>0$ such that for all $z\in \NN^p$ with $|z|>r_{\alpha_0}$, given $\{Z_0=z\}$, the original process has a positive probability of survival.
    
\section{Proof of Theorem~\ref{thm:integrabilitycriterion}}
    \label{sec:proof.thm.integrabilitycriterion}

In order to prove Theorem~\ref{thm:integrabilitycriterion}, we first prove that, if $\lambda^*>1$ and under the assumption that $\mathfrak M$ is bounded over $S$, we have that for all $\varepsilon \in (0,1)$, $\delta \in (0,\nicefrac1p]$,
\begin{equation}
\label{eq:first_limit}
\lim_{\substack{|z|\to+\infty\\ z\in U_\delta}} \PP \left(\forall n\in \NN,\,Z_{n+1} \in |(1-\varepsilon)\mathfrak M(Z_n),(1+\varepsilon)\mathfrak M(Z_n)|\right)=1,
\end{equation}
where $U_{\delta}$ is the set given by
\begin{equation}
\label{def:U_delta}
U_\delta=\{z\in \NN^p\,:\,z\geq |z|\delta \onesp\},
\end{equation}
where we recall that $\onesp=(1,\ldots,1)\in\NN^p$ and for all $a,b\in \RR_+^p$, $|a,b|:=\{z\in \RR_+^p,\ a\leq z\leq b\}$. We remark that, for $\delta>0$, $U_\delta$ is non-empty if and only if $\delta \in (0,\nicefrac1p]$.

Then, we prove that for any initial values,  either the process goes to extinction or it reaches a set $U_\delta$ in finite time.
Both results then lead to the proof of Theorem~\ref{thm:integrabilitycriterion}.

The second result is stated in Lemma~\ref{lem:minoration}, the first one is stated in Lemma~\ref{lem:capAn} and is based on Lemmas~\ref{lem:PAn}, \ref{lem:someconv} and \ref{lem:Miteratesgrowth} for which we introduction the following additional notation.

\medskip

For any $\varepsilon \in (0,1)$ and $n\geq 1$,  we consider the sequence of events 
\[A^{\varepsilon}_n:=\{\forall i\in \{1,\ldots,n\}, Z_i\in|(1-\varepsilon)\mathfrak M(Z_{i-1}),(1+\varepsilon)\mathfrak M(Z_{i-1})|\},
\] 
or simply $A_n$ when there is no risk of ambiguity. We also set $A^\varepsilon_0=\Omega$.

%
%
\begin{lem}
 \label{lem:PAn}
   Assume that $\mathfrak M$ is bounded on $S$. For any $\delta \in (0,\nicefrac1p]$ and $\varepsilon \in (0,1)$, there exists $c_0>0$  such that
 for all $z\in U_\delta$ and all $n\in\NN$,
   \begin{align*}
   \mathbb P\left(A_n^\varepsilon\mid Z_0=z\right)\geq 1-\sum_{i=1}^n c_0 \mathbb E\left(\mathds 1_{A^\varepsilon_{i-1}}f(|Z_{i-1}|)\mid Z_0=z\right).
   \end{align*}
    where
   \[
   f(x)=x\sum_{i=1}^p\sum_{j=1}^{q}\mathbb P(V_{i,j}>x)+\sum_{i=1}^p\sum_{j=1}^{q}\frac{\mathbb E(V_{i,j}^2\mathds 1_{V_{i,j}\leq x})}{x}.
   \]
    
\end{lem}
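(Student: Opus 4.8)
The plan is to reduce the statement, via a telescoping decomposition and the Markov property, to a single one-step concentration estimate, which I obtain by a truncation argument after first confining the process to a cone of directions bounded away from $\partial S$.

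\emph{Reduction.} Since $A^\varepsilon_n\subseteq A^\varepsilon_{n-1}$ and $A^\varepsilon_0=\Omega$, I would write $1-\PP(A^\varepsilon_n\mid Z_0=z)=\sum_{i=1}^n\PP(A^\varepsilon_{i-1}\setminus A^\varepsilon_i\mid Z_0=z)$. As $A^\varepsilon_{i-1}$ is $\sigma(Z_0,\dots,Z_{i-1})$-measurable while $Z_i$ is built from $Z_{i-1}$ using fresh randomness, the Markov property gives
\[
\PP(A^\varepsilon_{i-1}\setminus A^\varepsilon_i\mid Z_0=z)=\EE\bigl(\mathds 1_{A^\varepsilon_{i-1}}\,g(Z_{i-1})\mid Z_0=z\bigr),\qquad g(z'):=\PP\bigl(Z_1\notin|(1-\varepsilon)\mathfrak M(z'),(1+\varepsilon)\mathfrak M(z')|\mid Z_0=z'\bigr).
\]
It therefore suffices to find $c_0>0$ with $g(z')\le c_0 f(|z'|)$ for every value $z'$ that $Z_{i-1}$ can take on $A^\varepsilon_{i-1}$, $i\ge1$, when $Z_0\in U_\delta$.

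\emph{Forward invariance.} Write $K_{\delta'}:=\{u\in S:\ u\ge\delta'\onesp\}$ and $m_{\delta'}:=\min_\ell\inf_{u\in K_{\delta'}}\mathfrak M_\ell(u)$, which is positive for $\delta'>0$ by Proposition~\ref{prop:anotherproponM}; let also $M_0:=\sup_{u\in S}|\mathfrak M(u)|<\infty$. If $Z_{i-1}\in U_{\delta'}$, then on $A^\varepsilon_i$ one has $Z_i\ge(1-\varepsilon)\mathfrak M(Z_{i-1})\ge(1-\varepsilon)|Z_{i-1}|m_{\delta'}\onesp$ and $|Z_i|\le(1+\varepsilon)|Z_{i-1}|M_0$, hence $Z_i\in U_{G(\delta')}$ with $G(\delta'):=\tfrac{(1-\varepsilon)m_{\delta'}}{(1+\varepsilon)M_0}>0$ (the case $Z_{i-1}=0$ being trivial). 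Iterating from $\delta_0=\delta$ produces positive constants $\delta_1,\dots,\delta_{n_0-1}$; moreover, since $\mathfrak M^{n_0}$ ($n_0$ the primitivity index of $\mathfrak M$, Proposition~\ref{prop:primitivity}) is concave, positively homogeneous, increasing and satisfies $\mathfrak M^{n_0}(e_\ell)>0$ for all $\ell$, convexity of simplices yields $\mathfrak M^{n_0}(u)\ge c_*\onesp$ on $S$ for some $c_*>0$, while $|\mathfrak M^{n_0}|\le M_0^{n_0}$ there; since $Z_n\ge(1-\varepsilon)^{n_0}\mathfrak M^{n_0}(Z_{n-n_0})$ on $A^\varepsilon_n$, this forces $Z_n\in U_{\delta^*}$ for a fixed $\delta^*>0$ and all $n\ge n_0$. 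Consequently every admissible value $z'$ lies in $U_{\delta_{\min}}$ with $\delta_{\min}:=\min(\delta,\delta_1,\dots,\delta_{n_0-1},\delta^*)>0$, and it remains to bound $g$ on $U_{\delta_{\min}}$; note that there all coordinates of $\mathfrak M(z')$ are finite and strictly positive (Proposition~\ref{prop:anotherproponM} again).

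\emph{One-step estimate.} For $z'\in U_{\delta_{\min}}$ with $|z'|<R_0$ (a threshold to be fixed) there are finitely many values of $f(|z'|)$, all strictly positive — indeed $f(x)=0$ for an integer $x\ge1$ would force every $V_{i,j}$ to vanish a.s., contradicting $\sum_i\VV_{i,j}>0$ — so $g(z')\le1\le c_0 f(|z'|)$ once $c_0$ is large. For $|z'|=x\ge R_0$, set $W_1=\sum_i\sum_{k\le z'_i}V^{(k)}_{i,\cdot}$ and $B=\{V^{(k)}_{i,j}>x\text{ for some }i,j\text{ and }k\le z'_i\}$, so that $\PP(B)\le x\sum_{i,j}\PP(V_{i,j}>x)$; on $B^c$ we have $W_1=\widetilde W_1$, the truncated sum, and $\mathrm{Var}(\widetilde W_{1,j})\le x\sum_i\EE(V_{i,j}^2\mathds 1_{V_{i,j}\le x})$, so Chebyshev's inequality shows that the good event $G=B^c\cap\{|\widetilde W_{1,j}-\EE\widetilde W_{1,j}|\le\theta x\ \forall j\}$ obeys $\PP(G^c)\le\theta^{-2}f(x)$. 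Since $z'\in U_{\delta_{\min}}$ forces $(z'\VV)_j\ge\delta_{\min}v_*\,x$ with $v_*=\min_j\sum_i\VV_{i,j}>0$, on $G$ the additive error becomes multiplicative: $(1-\beta_x)z'\VV\le W_1\le(1+\beta_x)z'\VV$ with $\beta_x\to0$ as $\theta\to0$ and $x\to\infty$. The upper bound gives $Z_1=\xi(W_1)\le\xi(\lfloor(1+\beta_x)z'\VV\rfloor)\le\mathfrak M\bigl((1+\beta_x)z'\bigr)=(1+\beta_x)\mathfrak M(z')\le(1+\varepsilon)\mathfrak M(z')$. For the lower bound, $\mathfrak M$ is finite hence continuous on $S^*$, so Proposition~\ref{prop:M} provides $r_0$ and a freely chosen $\varepsilon'$ with $\xi(\lfloor ru\VV\rfloor)/r\ge\mathfrak M(u)-\varepsilon'\onesp$ for all $u\in K_{\delta_{\min}}$ and $r\ge r_0$; applying this at $u=z'/|z'|$, $r=(1-\beta_x)x\ge r_0$ and using $\mathfrak M_\ell(z')\ge x\,m_{\delta_{\min}}$ gives $Z_{1,\ell}\ge\xi_\ell(\lfloor(1-\beta_x)z'\VV\rfloor)\ge(1-\beta_x-\varepsilon'/m_{\delta_{\min}})\mathfrak M_\ell(z')\ge(1-\varepsilon)\mathfrak M_\ell(z')$ provided $\theta,\varepsilon'$ are small and $R_0$ large. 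Hence on $G$ the vector $Z_1$ lies in $|(1-\varepsilon)\mathfrak M(z'),(1+\varepsilon)\mathfrak M(z')|$, so $g(z')\le\PP(G^c)\le\theta^{-2}f(|z'|)$; a constant $c_0$ dominating both regimes finishes the proof.

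\emph{Main obstacle.} The delicate point is the lower bound on $Z_1=\xi(W_1)$: because $\xi$ is only monotone, concentration of $W_1$ around its mean $z'\VV$ does not control $\xi(W_1)$ directly, and one is forced to feed $W_1\ge(1-\beta_x)z'\VV$ into the asymptotic identity $\xi(\lfloor ru\VV\rfloor)/r\to\mathfrak M(u)$ at scale $r\approx|z'|$, uniformly in the direction $u$ — which is available only on compact subsets of $S^*$, where $\mathfrak M$ is continuous and each coordinate is bounded below by a positive constant. This is exactly why the forward-invariance step, pinning $Z_{i-1}/|Z_{i-1}|$ into such a compact cone, and through it the primitivity of $\mathfrak M$, cannot be dispensed with.
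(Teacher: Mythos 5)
Your proof is correct and follows essentially the same blueprint as the paper: a one-step Chebyshev concentration estimate valid uniformly on a compact cone $U_{\delta'}\cap S\subset S^*$, fed into an iteration where the key obstruction is that the cone parameter could degenerate with $n$, and resolved by pinning the direction of $Z_n$ into a $\delta$-independent cone after $n_0$ steps. The one genuine variation is in how that last point is handled. The paper invokes Krause's Theorem~\ref{thm:eigenvalue}~(\ref{item:thmKrausev}) — uniform convergence of $(\lambda^*)^{-n}\mathfrak M^n$ toward $\mathcal P(\cdot)z^*$ on $S$ — to conclude $\mathfrak M^{n_0}(u)\in|(\lambda^*)^{n_0}\mathcal P(u)z^*/2,\,3(\lambda^*)^{n_0}\mathcal P(u)z^*/2|$ uniformly on $S$, whence the fixed lower bound $\delta'=\min z^*\,\tfrac{(1-\varepsilon)^{n_0}}{3(1+\varepsilon)^{n_0}}$. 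You instead observe directly that $\mathfrak M^{n_0}$ is concave (composition of increasing concave maps), positively homogeneous, and satisfies $\mathfrak M^{n_0}(e_\ell)>0$ for all $\ell$ by primitivity, so on $S$ one has $\mathfrak M^{n_0}(u)\ge\sum_\ell u_\ell\,\mathfrak M^{n_0}(e_\ell)\ge c_*\onesp$ with $c_*=\min_\ell\min_i(\mathfrak M^{n_0}(e_\ell))_i>0$; combined with $|\mathfrak M^{n_0}|\le M_0^{n_0}$ this gives the same $\delta$-independent cone $U_{\delta^*}$ without invoking the Perron--Frobenius-type limit theorem at all. This is a genuine simplification — it buys you a proof of the lemma that does not route through $\lambda^*$, $z^*$, or $\mathcal P$. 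Two further minor streamlinings you made: the telescoping decomposition $1-\PP(A^\varepsilon_n)=\sum_{i=1}^n\PP(A^\varepsilon_{i-1}\setminus A^\varepsilon_i)$ stated up front (the paper builds the same sum inductively), and using $\xi(\lfloor(1+\beta_x)z'\VV\rfloor)\le\mathfrak M((1+\beta_x)z')$ directly from the supremum characterisation~\eqref{eq:defM} for the upper deviation, so that the uniform convergence of Proposition~\ref{prop:M} is only needed for the lower deviation (the paper uses it for both).
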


\begin{proof} We prove this lemma in two steps.\\
  \medskip\noindent \textit{Step 1.} We first consider the case $n=1$. That is, we prove that for $\delta \in (0,\nicefrac1p]$ and $\varepsilon \in (0,1)$, there exists $c_1$ such that, for $z\in U_\delta$,
  \begin{align}
  \label{eq:lemencadrement} 
    \mathbb{P}\left(Z_1\in |(1-\varepsilon)\mathfrak M(z),(1+\varepsilon)\mathfrak M(z)|\mid Z_0=z\right)\geq 1-
    c_1|z|\sum_{i=1}^p\sum_{j=1}^{q}\mathbb P(V_{i,j}>|z|)-c_1\sum_{i=1}^p\sum_{j=1}^{q}\frac{\mathbb E(V_{i,j}^2\mathds 1_{V_{i,j}\leq |z|})}{|z|}.
  \end{align}
   
   For $Z_0=z$, we have $Z_1=\xi(W_{1,1},\dots,W_{1,q})$ with 
   $
   W_{1,j}=\sum_{i=1}^p\sum_{k=1}^{z_i} V_{i,j}^{(k,1)}
   $
   for $1\leq j\leq q$.
   Fix $\delta_1\in(0,1)$ and $r_1>0$ (depending on $\delta_1$) such that, for all $z\in\mathbb N^p$ with $|z|\geq r_1$,
   \[
   (1-\delta_1) \mathbb V_{i,j} \leq \mathbb V_{i,j}^{\leq |z|}:=\mathbb E\left(V_{i,j}\mathds 1_{V_{i,j}\leq |z|}\right)\leq (1+\delta_1) \mathbb V_{i,j}.
   \]

For all   $|z|\geq r_1$ with $z\in U_\delta$, we have
   \begin{align*}
   \mathbb P((W_{1,1},\dots, W_{1,q})\geq  (1-\delta_1)^2z\VV\mid Z_0=z)
   &\geq \mathbb P(( W_{1,1},\dots, W_{1,q})\geq  (1-\delta_1)z\VV^{\leq|z|}\mid Z_0=z)\\
   &\geq 1-\sum_{j=1}^{q} \mathbb P( W_{1,j}< (1-\delta_1)\sum_{i=1}^pz_i\VV_{i,j}^{\leq|z|}\mid Z_0=z)
   \end{align*}
   but for $j\in \{1,\ldots, q\}$,
   \begin{align*}
   \mathbb P( W_{1,j}< (1-\delta_1)\sum_{i=1}^pz_i\VV_{i,j}^{\leq|z|}\mid Z_0=z)
   &\leq \mathbb P(\exists i\in\{1,\ldots,p\},k\in\{1,\ldots,z_i\}\text{ s.t }V_{i,j}^{(k,1)}>|z|\mid Z_0=z)\\
   &\quad+\mathbb P\left(\sum_{i=1}^p\sum_{k=1}^{z_i} \left(V_{i,j}^{(k,1)}\mathds 1_{ V_{i,j}^{(k,1)}\leq |z|}-\VV_{i,j}^{\leq |z|}\right)< -\delta_1\sum_{i=1}^pz_i\VV_{i,j}^{\leq |z|}\mid Z_0=z\right)\\
   &\leq \sum_{i=1}^p z_i\, \mathbb P(V_{i,j}>|z|)+\frac{\text{Var}\left(\sum_{i=1}^p\sum_{k=1}^{z_i} V_{i,j}^{(k,1)}\mathds 1_{ V_{i,j}^{(k,1)}\leq |z|}\mid Z_0=z\right)}{\delta_1^2\left(\sum_{i=1}^pz_i\VV_{i,j}^{\leq |z|}\right)^2}\\
   &\leq |z| \sum_{i=1}^p \mathbb P(V_{i,j}>|z|)+\frac{\sum\limits_{i=1}^p\sum\limits_{k=1}^{z_i}\text{Var}\left( V_{i,j}^{(k,1)}\mathds 1_{ V_{i,j}^{(k,1)}\leq |z|}\mid Z_0=z\right)}{\delta_1^2\delta^2(1-\delta_1)^2\left(\sum\limits_{i=1}^p\VV_{i,j}\right)^2|z|^2}\\
   &\leq |z| \sum_{i=1}^p \mathbb P(V_{i,j}>|z|)+\frac{\sum\limits_{i=1}^p \EE\left( V_{i,j}^2\mathds{1}_{V_{i,j}\leq |z|}\right)}{\delta_1^2\delta^2(1-\delta_1)^2\left(\sum\limits_{i=1}^p\VV_{i,j}\right)^2|z|},
   \end{align*}
   where we used the independence of the random variables $V_{i,j}^{(k,1)}$, the fact that $z_i\geq \delta|z|$ for all $i\in\{1,\ldots,p\}$, and $\VV_{i,j}^{\leq |z|}\geq (1-\delta_1) \VV_{i,j}$. Proceeding similarly for the event $\{( W_{1,1},\dots, W_{1,q})\leq  (1+\delta_1)^2z\VV\}$, we deduce that there exists a constant $c>0$ such that
   \begin{multline}
   \label{eq:step1}
   \mathbb P( W_1\in |(1-\delta_1)^2z\VV,(1+\delta_1)^2z\VV|\mid Z_0=z)\\
   \geq 1-
   c|z|\sum_{i=1}^p \sum_{j=1}^q\mathbb P(V_{i,j}>|z|)-c\sum_{i=1}^p \sum_{j=1}^q\frac{\mathbb E(V_{i,j}^2\mathds 1_{V_{i,j}\leq |z|})}{|z|},
   \end{multline}
   where $ W_1=( W_{1,1},\ldots, W_{1,q})$.

   If we now apply Proposition~\ref{prop:M} with the compact set $U_\delta \cap S$, for all $\varepsilon'>0$, there exists $r_2>0$ such that if $\ell > r_2(1-\delta_1)^2$,
   \[
   \left|\frac{\xi(\ell u \VV)}{\ell}-\mathfrak M(u)\right|\leq \varepsilon',\ \forall u\in U_\delta\cap S,
   \]
   and we deduce that for all $z\in U_\delta$ such that $|z|\geq r_2$,
   \begin{align*}
   \left|\frac{\xi((1-\delta_1)^2z\VV)}{(1-\delta_1)^2|z|}-\frac{\mathfrak M(z)}{|z|}\right|\leq \varepsilon'.
   \end{align*}
   
   Hence, for all $z\in U_\delta$ with $|z|\geq r_2$,
   \begin{align*}
   \frac{\xi((1-\delta_1)^2z\VV)}{(1-\delta_1)^2|z|}\geq \mathfrak M(z/|z|)-\varepsilon'.
   \end{align*}
   In addition, since $\mathfrak M$ is concave on $S$, it is locally Lipschitz on $S^*$ and in particular each of its components are uniformly bounded away from $0$ on $U_\delta \cap S\subset S^*$ by a constant $m_1>0$ (which  depends on $\delta \in (0,\nicefrac1p]$). Hence, for all $z\in U_\delta$ with $|z|\geq r_2$, 
   \begin{align*}
   \frac{\xi((1-\delta_1)^2z\VV)}{(1-\delta_1)^2|z|}\geq \mathfrak M(z/|z|)(1-\varepsilon'/m_1).
   \end{align*}
   Similarly, there exist $r_3>0$ and $m_2>0$ such that, for all $z\in U_\delta$ with $|z|\geq r_3$,
   \begin{align*}
   \frac{\xi((1+\delta_1)^2z\VV)}{(1+\delta_1)^2|z|}\leq \mathfrak M(z/|z|)(1+\varepsilon'/m_2).
   \end{align*}
   Hence, for $z\in U_\delta$ with $|z|\geq r_1\vee r_2 \vee r_3$,
   \begin{multline}
   \{( W_{1,1},\dots, W_{1,q})\in |(1-\delta_1)^2z\VV,(1+\delta_1)^2z\VV|\}\\
   \subset \{Z_1\in|\mathfrak M(z)(1-\varepsilon'/m_1)(1-\delta_1)^2, \mathfrak M(z)(1+\varepsilon'/m_2)(1+\delta_1)^2|\}.
   \end{multline}
   Choosing $\varepsilon'$ and $\delta_1$ small enough, we deduce that there exists $r_4>0$ such that, for all $z\in U_\delta$ with $|z|\geq r_4$,~\eqref{eq:lemencadrement} holds true for some constant $c_1$. Up to a change in the constant $c_1$, we deduce that this is true for all $z\in U_\delta$.
   
    \medskip\noindent \textit{Step 2.} We iterate now the result obtained in the previous step. We have, for all $z\in U_\delta$.
    \begin{align*}
     \mathbb P\left(Z_1\in|(1-\varepsilon)\mathfrak M(z),(1+\varepsilon)\mathfrak M(z)|\mid Z_0=z\right)
    &\geq 1-c_1f(|z|)
    \end{align*}
    for some constant $c_1>0$. Then, observing that $Z_1\in|(1-\varepsilon)\mathfrak M(z),(1+\varepsilon)\mathfrak M(z)|$ implies that $Z_1\in ||z|(1-\varepsilon)\mathfrak M(z/|z|),(1+\varepsilon)|z|\mathfrak M(z/|z|)|$ and hence that
    \begin{align*}
    Z_1/|Z_1|\geq \frac{(1-\varepsilon)\mathfrak M(z/|z|)}{(1+\varepsilon)|\mathfrak M(z/|z|)|}\geq \delta'_1\onesp,
    \end{align*}
    with
    \begin{align*}
    \delta'_1:=\frac{(1-\varepsilon)\min_{u\in U_\delta \cap S,\,i\in\{1,\ldots,p\}} \langle \mathfrak M(u),e_i\rangle}{(1+\varepsilon)\sup\limits_{u\in S}|\mathfrak M(u)|}.
    \end{align*}
    where $\min_{u\in U_\delta\cap S,\,i\in\{1,\ldots,p\}} \langle \mathfrak M(u),e_i\rangle>0$ by Proposition~\ref{prop:anotherproponM}. Now, applying the same reasoning as in \textit{Step 1} but with $\delta_1$ instead of $\delta$, we deduce that there exists a constant $c_2>0$ such that, on the event $\{Z_0=z\}$,
    \begin{align*}
    \mathbb P\left(Z_2\in|(1-\varepsilon)\mathfrak M(Z_1),(1+\varepsilon)\mathfrak M(Z_1)|\mid Z_1\right)
    &\geq \mathds{1}_{Z_1\in|(1-\varepsilon)\mathfrak M(z),(1+\varepsilon)\mathfrak M(z)|} (1-c_2f(|Z_1|))\\
    &=\mathds 1_{A^\varepsilon_1}(1-c_2f(|Z_1|)).
    \end{align*}
    And hence, using Markov's property at time $1$,
    \begin{align*}
    \mathbb P\left(A^\varepsilon_2\mid Z_0=z\right)
    &\geq \mathbb E\left(\mathbb P(Z_2\in|(1-\varepsilon)\mathfrak M(Z_1),(1+\varepsilon)\mathfrak M(Z_1)|\mid Z_1)\mathds{1}_{A^\varepsilon_1}\mid Z_0=z\right)\\
    &\geq \mathbb P(A^\varepsilon_1\mid Z_0=z)-\mathbb{E}\left(\mathds{1}_{A^\varepsilon_1} c_2f(|Z_1|)\mid Z_0=z\right)\\
    &\geq 1-c_1f(|z|)-c_2\mathbb{E}\left(\mathds{1}_{A^\varepsilon_1} f(|Z_1|)\mid Z_0=z\right)
    \end{align*}

    Iterating this procedure, we deduce that there exists a positive sequence $(c_n)_{n\geq 1}$ such that, for all $n\geq 1$,
    \begin{align*}
    \mathbb P\left(A^\varepsilon_n\mid Z_0=z\right)
    &\geq 1 -\sum_{i=1}^n c_i\mathbb E\left(\mathds 1_{A^\varepsilon_{i-1}}f(|Z_{i-1}|)\mid Z_0=z\right).
    \end{align*}    
    According to Theorem~\ref{thm:eigenvalue}~(\ref{item:thmKrausev}), there exists $n_0$ such that
    \begin{align*}
    \sup_{u\in S} \left|(\lambda^*)^{-{n_0}}\mathfrak M^{n_0}(u)-\mathcal P(u)z^*\right|\leq \frac{1}{2}\inf_S \mathcal P\,\min_{i\in\{1,\ldots,p\}} z^*_i,
    \end{align*}
    so that $\mathfrak M^{n_0}(u)\in\left|(\lambda^*)^{{n_0}}\mathcal P(u)z^*/2,3(\lambda^*)^{{n_0}}\mathcal P(u)z^*/2\right|$, $\forall u\in S$.
    Then, under $A^\varepsilon_{n_0}$, we have 
    \[
    Z_{n_0}\in|(1-\varepsilon)^{n_0}\mathfrak M^{n_0}(z),(1+\varepsilon)^{n_0}\mathfrak M^{n_0}(z)|\subset \left|(1-\varepsilon)^{n_0}(\lambda^*)^{n_0}\mathcal P(z)z^*/2,3(1+\varepsilon)^{n_0}(\lambda^*)^{n_0}\mathcal P(z)z^*/2\right|,
    \]
    so that (recall that $|z^*|=1$)
    \begin{align*}
    Z_{n_0}/|Z_{n_0}|\geq \frac{(1-\varepsilon)^{n_0}(\lambda^*)^{n_0}\mathcal P(z)z^*/2}{3(1+\varepsilon)^{n_0}(\lambda^*)^{n_0}\mathcal P(z)/2}
    \geq \min z^*\,\frac{(1-\varepsilon)^{n_0}}{3(1+\varepsilon)^{n_0}}\,\onesp.
    \end{align*}
    Since $\delta':=\min z^*\,\frac{(1-\varepsilon)^{n_0}}{3(1+\varepsilon)^{n_0}}$ does not depend on $\delta$, we deduce from \textit{Step 1} that there exists $c'_1$ which does not depend on $\delta$ such that
    \begin{align*}
    \mathbb P(A^\varepsilon_{n_0+1}\mid Z_0,\ldots,Z_{n_0})\geq \mathds 1_{A^\varepsilon_{n_0}}\left(1-c'_1f(|Z_{n_0}|)\right).
    \end{align*}
    Iterating the procedure of the beginning of the proof, we deduce that there exist $c'_2,\ldots,c'_{n_0}$ which do not depend on $\delta>0$ such that, for all $n\in\{n_0+1,\ldots,2n_0\}$,
        \begin{align*}
    \mathbb P\left(A^\varepsilon_n\mid Z_0=z\right)
    &\geq 1-\sum_{i=1}^{n_0} c_i\mathbb E\left(\mathds 1_{A^\varepsilon_{i-1}}f(|Z_{i-1}|)\mid Z_0=z\right)-\sum_{i=n_0+1}^{n} c'_{i-n_0}\mathbb E\left(\mathds 1_{A^\varepsilon_{i-1}}f(|Z_{i-1}|)\mid Z_0=z\right)
    \end{align*}    
    Under $A^\varepsilon_{2n_0}$, we have 
    \begin{align*}
    Z_{2n_0}\in |(1-\varepsilon)^{n_0}\mathfrak M^{n_0}(Z_{n_0}),(1+\varepsilon)^{n_0}\mathfrak M^{n_0}(Z_{n_0})|\subset \left|(1-\varepsilon)^{n_0}(\lambda^*)^{n_0}\mathcal P(Z_{n_0})z^*/2,3(1+\varepsilon)^{n_0}(\lambda^*)^{n_0}\mathcal P(Z_{n_0})z^*/2\right|,
    \end{align*}
    hence, using the same computations as above, we have 
    \begin{align*}
    \mathbb P(A^\varepsilon_{2n_0+1}\mid Z_0,\ldots,Z_{2n_0})\geq \mathds 1_{A^\varepsilon_{2n_0}}\left(1-c'_1f(|Z_{2n_0}|)\right),
    \end{align*}
    with the same constant $c'_1$. Iterating the procedure of the beginning of the proof, we deduce that, for all $n\in \{2n_0+1,\ldots,3n_0\}$,
    \begin{multline*}
    \mathbb P\left(A^\varepsilon_n\mid Z_0=z\right)
    \geq 1-\sum_{i=1}^{n_0} c_i\mathbb E\left(\mathds 1_{A^\varepsilon_{i-1}}f(|Z_{i-1}|)\mid Z_0=z\right)\\-\sum_{i=n_0+1}^{2n_0} c'_{i-n_0}\mathbb E\left(\mathds 1_{A^\varepsilon_{i-1}}f(|Z_{i-1}|)\mid Z_0=z\right)\\-\sum_{i=2n_0+1}^{n} c'_{i-2n_0}\mathbb E\left(\mathds 1_{A^\varepsilon_{i-1}}f(|Z_{i-1}|)\mid Z_0=z\right)
    \end{multline*}    
    Proceeding by induction and taking $c_0=\max_{i\in\{1,\ldots,n_0\}}c_i\vee c'_i$, this concludes the proof of the lemma.
\end{proof}

We prove now a useful auxiliary lemma.

\begin{lem}
    \label{lem:someconv}
    Let $(x_n)_{n\geq 0}$ be a positive sequence such that, for some $\alpha >1$ and $c>0$,
    \begin{align}
    \label{eq:oneassumption}
    x_n\geq c\alpha^{n-k} x_k,\quad\forall n\geq k\geq 0.
    \end{align}
    Then
    \begin{align*}
    \sum_{n\geq 0}  f(x_n)\leq \frac{2\alpha}{c(\alpha-1)}\sum_{i=1}^p\sum_{j=1}^{q}\mathbb E\left(\frac{V_{i,j}^2}{(cx_0)\vee V_{i,j}}\right)
    \end{align*}
    with $f$ defined in Lemma~\ref{lem:PAn}.
\end{lem}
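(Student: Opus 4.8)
The plan is to bound, for each pair $(i,j)$ separately, the two series arising from the two summands in the definition of $f$, and then sum over $i,j$. In both cases I would exchange the (nonnegative) sum over $n$ with the expectation over $V_{i,j}$ by Tonelli, and reduce to estimating a deterministic geometric-type series once the value $v$ of $V_{i,j}$ is frozen. Two preliminary observations drawn from \eqref{eq:oneassumption} will be used repeatedly: taking $k=0$ gives $x_n\ge c\alpha^n x_0\ge cx_0$ for all $n\ge 0$ (since $\alpha>1$), and hence $x_n\to+\infty$, so that for every $v>0$ the index set $\{n:x_n<v\}$ is finite.

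For the first summand I would write $\sum_{n\ge 0}x_n\,\mathbb P(V_{i,j}>x_n)=\mathbb E\bigl(\sum_{n:\,x_n<V_{i,j}}x_n\bigr)$ and fix $v\geq 0$. If $v\le cx_0$ then $x_n\ge v$ for every $n$, so the inner sum vanishes and, as $(cx_0)\vee v=cx_0$, the bound $\sum_{n:\,x_n<v}x_n\le \tfrac{\alpha}{c(\alpha-1)}\tfrac{v^2}{(cx_0)\vee v}$ holds trivially. If $v>cx_0$ and the inner sum is nonempty, let $N$ be its largest index; for $0\le k\le N$, \eqref{eq:oneassumption} with $n=N$ gives $x_k\le c^{-1}\alpha^{k-N}x_N$, hence $\sum_{n:\,x_n<v}x_n\le\sum_{k=0}^{N}c^{-1}\alpha^{k-N}x_N\le \tfrac{x_N}{c}\sum_{m\ge 0}\alpha^{-m}=\tfrac{\alpha}{c(\alpha-1)}x_N<\tfrac{\alpha}{c(\alpha-1)}v=\tfrac{\alpha}{c(\alpha-1)}\tfrac{v^2}{(cx_0)\vee v}$, using $x_N<v$ and $(cx_0)\vee v=v$. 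Taking expectations yields $\sum_{n\ge 0}x_n\,\mathbb P(V_{i,j}>x_n)\le \tfrac{\alpha}{c(\alpha-1)}\mathbb E\bigl(\tfrac{V_{i,j}^2}{(cx_0)\vee V_{i,j}}\bigr)$.

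For the second summand I would write $\sum_{n\ge 0}x_n^{-1}\mathbb E(V_{i,j}^2\mathds 1_{V_{i,j}\leq x_n})=\mathbb E\bigl(V_{i,j}^2\sum_{n:\,x_n\ge V_{i,j}}x_n^{-1}\bigr)$ and again fix $v>0$ (the contribution of $\{V_{i,j}=0\}$ being null). If $v\le cx_0$ then every $n$ satisfies $x_n\ge v$ and $\sum_{n\ge 0}x_n^{-1}\le\sum_{n\ge 0}(cx_0\alpha^n)^{-1}=\tfrac{\alpha}{c(\alpha-1)}\tfrac{1}{cx_0}$, so $v^2\sum_{n\ge 0}x_n^{-1}\le \tfrac{\alpha}{c(\alpha-1)}\tfrac{v^2}{cx_0}=\tfrac{\alpha}{c(\alpha-1)}\tfrac{v^2}{(cx_0)\vee v}$. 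If $v>cx_0$, let $M$ be the smallest index with $x_M\ge v$; for $n\ge M$, \eqref{eq:oneassumption} with $k=M$ gives $x_n\ge c\alpha^{n-M}x_M\ge c\alpha^{n-M}v$, whence $\sum_{n:\,x_n\ge v}x_n^{-1}\le\sum_{n\ge M}(c\alpha^{n-M}v)^{-1}=\tfrac{\alpha}{c(\alpha-1)v}$ and $v^2\sum_{n:\,x_n\ge v}x_n^{-1}\le\tfrac{\alpha}{c(\alpha-1)}v=\tfrac{\alpha}{c(\alpha-1)}\tfrac{v^2}{(cx_0)\vee v}$. Taking expectations gives the same bound $\tfrac{\alpha}{c(\alpha-1)}\mathbb E\bigl(\tfrac{V_{i,j}^2}{(cx_0)\vee V_{i,j}}\bigr)$ for this term.

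Adding the two estimates and summing over $1\le i\le p$ and $1\le j\le q$ produces exactly $\sum_{n\ge 0}f(x_n)\le\tfrac{2\alpha}{c(\alpha-1)}\sum_{i=1}^{p}\sum_{j=1}^{q}\mathbb E\bigl(\tfrac{V_{i,j}^2}{(cx_0)\vee V_{i,j}}\bigr)$. There is no genuine obstacle; the only point that requires care is the dichotomy $v\le cx_0$ versus $v>cx_0$, which is precisely what forces the maximum $(cx_0)\vee V_{i,j}$ to appear in the denominator, together with the uniform lower bound $x_n\ge cx_0$ obtained from \eqref{eq:oneassumption} by setting $k=0$.
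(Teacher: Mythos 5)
Your proof is correct and follows essentially the same route as the paper's: Tonelli to swap sum and expectation, identification of the crossing index of the sequence $(x_n)$ relative to $V_{i,j}$, and a geometric-series estimate, with the dichotomy $v\le cx_0$ versus $v>cx_0$ playing the role of the paper's bound $\mathds 1_{V_{i,j}>cx_0}\le V_{i,j}/((cx_0)\vee V_{i,j})$. The only blemish is the claimed equality $\sum_{n\ge 0}(cx_0\alpha^n)^{-1}=\tfrac{\alpha}{c(\alpha-1)}\tfrac{1}{cx_0}$, which is off by a factor of $c$ and should be an inequality; it is harmless here since the hypothesis applied with $n=k$ forces $c\le 1$.
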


\begin{proof}
     We first consider the first part and then the second part of $f(x_n)$, for each $i\in\{1,\ldots,p\}$ and $j\in\{1,\ldots,q\}$.
    
    We have, using Fubini's Theorem,
    \begin{align*}
    \sum_{n\geq 0} x_n\mathbb P(V_{i,j}>x_n)
    &=\mathbb E\left(\sum_{n\geq 0} x_n\mathds 1_{V_{i,j}>x_n}\right)\\
    &\leq \mathbb E\left(\mathds 1_{V_{i,j}>\min\limits_{n\in\NN} x_n}\sum_{n=0}^{N_{ij}} x_n\right),
    \end{align*}
    where $N_{ij}:=\max\{n\geq 0, x_n< V_{i,j}\}$, with the convention that $\max \emptyset=-1$ (note that, since $\alpha>1$, $x_n\to+\infty$ so that $N_{ij}<+\infty$). Inequality~\eqref{eq:oneassumption} entails that $\min_{n\in\NN} x_n\geq cx_0$ and that, for all $n\leq N_{ij}$, $x_n\leq \alpha^{n-N_{ij}}x_{N_{ij}}/c\leq \alpha^{n-N_{ij}}V_{i,j}/c$. We deduce that
    \begin{align*}
    \sum_{n\geq 0} x_n\mathbb P(V_{i,j}>x_n)
    &\leq \mathbb E\left(\mathds 1_{V_{i,j}>c x_0} V_{i,j} \sum_{n=0}^{N_{ij}} \alpha^{n-N_{ij}}/c\right)\\
    &\leq \mathbb E\left(\mathds 1_{V_{i,j}>c x_0} V_{i,j} \frac{\alpha}{c(\alpha-1)}\right)\\
    &\leq  \frac{\alpha}{c(\alpha-1)}\mathbb E\left(\frac{V_{i,j}^2}{(cx_0)\vee V_{i,j}}\right),
    \end{align*}
    where we used the fact that $\mathds 1_{V_{i,j}>c x_0}\leq \frac{V_{i,j}}{(cx_0)\vee V_{ij}}$ almost surely.
    
    Using again Fubini's Theorem, we have
    \[
    \sum_{n\geq 0} \frac{\mathbb E\left(V_{i,j}^2\mathds 1_{V_{i,j}\leq x_n}\right)}{x_n}
    =\mathbb E\left(V_{i,j}^2  \sum_{n\geq 0}\frac{\mathds 1_{V_{i,j}\leq x_n}}{x_n}\right)
    \leq \mathbb E\left(V_{i,j}^2  \sum_{n=N'_{ij}}^{+\infty}\frac{1}{x_n}\right),
    \]
    where $N'_{ij}=\min\{n\geq 0, x_n\geq V_{i,j}\}=N_{ij}+1$.  Inequality~\eqref{eq:oneassumption} entails that, for all $n\geq N'_{ij}$, $1/x_n\leq \alpha^{N'_{ij}-n}/(cx_{N'_{ij}})$. Hence, using the fact that $x_{N'_{ij}}\geq V_{i,j}$ by definition of $N'_{ij}$, we obtain
    \[
    \sum_{n\geq 0} \frac{\mathbb E\left(V_{i,j}^2\mathds 1_{V_{i,j}\leq x_n}\right)}{x_n}
    \leq \mathbb E\left(V_{i,j}^2\frac{1}{x_{N'_{ij}}\vee V_{i,j}} \sum_{n=N'_{ij}}^{+\infty}\frac{\alpha^{N'_{ij}-n}}{c}\right)
    \leq \mathbb E\left(\frac{V_{i,j}^2}{(cx_0)\vee V_{i,j}}\frac{\alpha}{c(\alpha-1)}\right),
    \]
    where we used the fact that $x_{N'_{ij}}\geq cx_0$ by~\eqref{eq:oneassumption}.
    
    Summing over $i\in\{1,\ldots,p\}$ and $j\in\{1,\ldots,q\}$, this concludes the proof of Lemma~\ref{lem:someconv}.
\end{proof}

Now we state a second auxiliary lemma, where we prove that for all $z\in \NN^p$, the sequence $\left(\mathfrak M^n(z)\right)_{n\in\NN}$ holds the property~\eqref{eq:oneassumption}.
\begin{lem}
    \label{lem:Miteratesgrowth}
     Assume that $\mathfrak M$ is bounded on $S$ and $\lambda^*>1$. There exists a constant $c_0\in (0,1]$ and $\lambda\in(1,\lambda^*)$ such that, for all $z\in \NN^p$ and all $n\geq 1$ and $k\in\{0,\ldots,n\}$,
     \begin{align*}
     |\mathfrak M^n(z)|\geq c_0 \lambda^{n-k}  |\mathfrak M^{k}(z)|.
     \end{align*}
\end{lem}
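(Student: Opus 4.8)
The plan is to obtain the geometric growth by combining a crude one-step lower bound (which is enough over a bounded number of iterations) with the uniform Perron--Frobenius convergence of Theorem~\ref{thm:eigenvalue}(\ref{item:thmKrausev}) (which handles all long time ranges). The crux is that after a \emph{fixed} number $n_1$ of iterations the vector $\mathfrak M^{n_1}(v)$ dominates, componentwise, a positive multiple of the eigenvector $z^*$; from then on each further application of $\mathfrak M$ multiplies that multiple exactly by $\lambda^*$, which turns ``eventually comparable to $(\lambda^*)^n$'' into the desired bound with a constant independent of the base point.

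First I would collect the ingredients. Putting $b:=\inf_{z\in S}|\mathfrak M(z)|$, which is positive by Proposition~\ref{prop:anotherproponM}, positive homogeneity gives $|\mathfrak M(w)|\ge b|w|$ and hence $|\mathfrak M^m(w)|\ge (b\wedge 1)^m|w|$ for all $w\in\RR_+^p$ and $m\ge 0$. The map $\mathcal P$ of Theorem~\ref{thm:eigenvalue}(2) is concave, positively homogeneous and strictly positive off the origin; being concave and positively homogeneous it is superadditive, so $\mathcal P(w)\ge\sum_i w_i\,\mathcal P(e_i)$ and therefore $p_*:=\inf_{z\in S}\mathcal P(z)\ge\min_i\mathcal P(e_i)>0$. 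Moreover $\mathfrak M$ is non-decreasing (superadditivity together with $\mathfrak M\ge 0$), and iterating $\mathfrak M(z^*)=\lambda^*z^*$ gives $\mathfrak M^m(z^*)=(\lambda^*)^m z^*$ for every $m$.

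Next I would fix $\lambda:=(1+\lambda^*)/2\in(1,\lambda^*)$ and $\varepsilon:=\tfrac12 p_*\min_i z^*_i>0$ (recall $z^*\in S^*$), and invoke Theorem~\ref{thm:eigenvalue}(\ref{item:thmKrausev}) to obtain $n_1\ge 1$ with $|\mathfrak M^k(u)/(\lambda^*)^k-\mathcal P(u)z^*|\le\varepsilon$ for all $u\in S$ and $k\ge n_1$. Comparing components and using $\mathcal P(u)\ge p_*$ gives $\mathfrak M^k(u)\ge\tfrac12 p_*(\lambda^*)^k z^*$ for all $u\in S$, $k\ge n_1$, and then positive homogeneity yields $\mathfrak M^k(v)\ge\tfrac12 p_*(\lambda^*)^k|v|\,z^*$ for all $v\in\RR_+^p$ and $k\ge n_1$.

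Finally, given $z\in\NN^p$ and $0\le k\le n$, I would set $v:=\mathfrak M^k(z)$ (the inequality being trivial when $v=0$) and split on $n-k$. If $n-k< n_1$, then $|\mathfrak M^n(z)|=|\mathfrak M^{n-k}(v)|\ge (b\wedge 1)^{n-k}|v|\ge (b\wedge 1)^{n_1}|v|\ge \big((b\wedge 1)^{n_1}\lambda^{-n_1}\big)\lambda^{n-k}|\mathfrak M^k(z)|$. If $n-k\ge n_1$, then $\mathfrak M^{n_1}(v)\ge c\,z^*$ with $c:=\tfrac12 p_*(\lambda^*)^{n_1}|v|$, and applying the non-decreasing map $\mathfrak M^{n-k-n_1}$ together with $\mathfrak M^m(z^*)=(\lambda^*)^m z^*$ gives $\mathfrak M^n(z)=\mathfrak M^{n-k-n_1}(\mathfrak M^{n_1}(v))\ge c(\lambda^*)^{n-k-n_1}z^*$, so that, since $|z^*|=1$ and $\lambda\le\lambda^*$, $|\mathfrak M^n(z)|\ge c(\lambda^*)^{n-k-n_1}=\tfrac12 p_*(\lambda^*)^{n-k}|v|\ge\tfrac12 p_*\,\lambda^{n-k}|\mathfrak M^k(z)|$. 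Taking $c_0:=\min\{(b\wedge 1)^{n_1}\lambda^{-n_1},\,p_*/2\}\in(0,1]$ concludes. The only real obstacle is exactly this last point --- upgrading the fixed-horizon estimate coming from uniform convergence to one valid for every horizon $n-k$ with a constant independent of the intermediate state $\mathfrak M^k(z)$ --- and the mechanism that achieves it is the componentwise domination of $\mathfrak M^{n_1}(v)$ by a multiple of $z^*$ followed by the exploitation of the exact eigenrelation.
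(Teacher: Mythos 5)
Your proof is correct, and it follows essentially the same strategy as the paper's: a crude short-horizon bound coming from $\inf_{u\in S}|\mathfrak M(u)|>0$ (Proposition~\ref{prop:anotherproponM}), combined with the uniform Perron--Frobenius convergence of Theorem~\ref{thm:eigenvalue}(\ref{item:thmKrausev}) for the long-horizon regime.

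The mechanism you use in the long-horizon part is, however, a genuinely nicer variant of the paper's. The paper fixes $\delta$ with $\lambda=\tfrac{1-\delta}{1+\delta}\lambda^*>1$, shows that any vector lying in the two-sided cone $\left|(1-\delta)xz^*,(1+\delta)xz^*\right|$ gains a factor $\lambda$ under one application of $\mathfrak M$, and then iterates this one-step estimate once the iterate enters the cone. You instead extract only the one-sided conclusion of the uniform convergence --- namely that $\mathfrak M^{n_1}(v)\geq \tfrac12 p_*(\lambda^*)^{n_1}|v|\,z^*$ --- and then let the monotonicity of $\mathfrak M$ together with the exact eigenrelation $\mathfrak M^m(z^*)=(\lambda^*)^m z^*$ propagate the growth forward. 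This avoids having to control both sides of the cone, and it even yields the sharper exponential rate $\lambda^*$ (rather than $\lambda<\lambda^*$) in that regime, with the slack absorbed into the multiplicative constant $c_0$. Your split on $n-k$ (rather than on $k\geq n_0$ vs.\ $k<n_0$, which in the paper implicitly requires a chaining step for the mixed case $k<n_0\leq n$) also makes the case analysis a bit cleaner. Both arguments are valid; yours is a legitimate and slightly tighter variant.

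One small point to make the write-up airtight: the justification $|\mathfrak M^m(w)|\geq (b\wedge 1)^m|w|$ actually gives $|\mathfrak M^m(w)|\geq b^m|w|$, which is stronger; using $(b\wedge 1)^m$ is fine (and convenient since you then want a lower bound over $m<n_1$), but you may as well note that the stronger $b^m$ holds directly by iterating $|\mathfrak M(w)|\geq b|w|$.
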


\begin{proof}
     Let $\delta\in(0,1)$ such that $\lambda:=\frac{1-\delta}{1+\delta}\lambda^*>1$. Fix $z\in \NN^p\setminus\{0\}$. If there exists $x>0$ such that $z\in|(1-\delta)xz^*,(1+\delta)x z^*|$, then $|z|\leq |(1+\delta)x z^*|=(1+\delta)x $, and, since $M$ is increasing and positively homogeneous,
     \begin{align}
     \label{eq:Mincr}
     |\mathfrak M(z)|\geq |\mathfrak M((1-\delta)xz^*)|=|(1-\delta)x\mathfrak M(z^*)|=|(1-\delta)x\lambda^* z^*|= (1-\delta)x\lambda^*\geq \lambda|z|.
     \end{align}
     Moreover,  Theorem~\ref{thm:eigenvalue}~(\ref{item:thmKrausev}) 
     entails that there exists $n_0\geq 1$ such that, for any $z\in S$ and $n\geq n_0$,
     \[
     \mathfrak M^n(z)\in |(1-\delta)(\lambda^*)^{n} C(z) z^*,(1+\delta)(\lambda^*)^{n} C(z) z^*|,
     \]
     so that, according to~\eqref{eq:Mincr}, for $n\geq k \geq n_0$ 
     \[
     |\mathfrak M^{n+1}(z)|\geq\lambda|\mathfrak M^n(z)|\geq \lambda^{n+1-k} |\mathfrak M^{k}(z)|\geq \lambda^{n+1-n_0} |\mathfrak M^{n_0}(z)|,
     \]
     By homogeneity of $\mathfrak M$, this extends to all $z\in \NN^p$. 

     For all $n\in\{0,\ldots,n_0\}$ and all $z\in (\NN\setminus\{0\})^p$, we have, for all $k\in\{0,\ldots,n\}$,
     \begin{align*}
     |\mathfrak M^{n+1}(z)|&\geq  \left|\mathfrak M\left(\frac{\mathfrak M^n(z)}{|\mathfrak M^n(z)|}\right)\right|\,|\mathfrak M^n(z)|\geq \inf_S |\mathfrak M|\,|\mathfrak M^n(z)|\geq (\inf_S |\mathfrak M|)^{n+1-k}\,|\mathfrak M^{k}(z)|\\
                   &\geq c_1^{n+1-k} \lambda^{n+1-k} |\mathfrak M^{k}(z)|,
     \end{align*}
    where $c_1= \lambda^{-1}\inf_S |\mathfrak M| $. Setting $c_0=1\wedge c_1^{n_0}$ concludes the proof of Lemma~\ref{lem:Miteratesgrowth}.
\end{proof}

We are now in position to compute the limit~\eqref{eq:first_limit}.

    \begin{lem}
        \label{lem:capAn}
        Assume that $\mathfrak M$ is bounded on $S$ and $\lambda^*>1$. For any $\delta\in (0,\nicefrac1p]$ and $\varepsilon \in (0,1)$, we have
        \begin{align*}
        \mathbb P\left(\bigcap_{n\geq 1} A_n^\varepsilon\mid Z_0=z\right)\xrightarrow[|z|\to+\infty,\,z\in U_\delta]{} 1.
        \end{align*}
    \end{lem}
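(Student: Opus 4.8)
\emph{Proof plan for Lemma~\ref{lem:capAn}.} Since $\mathfrak M$ is finite and primitive, the eigenelements $\lambda^*,z^*$ of Theorem~\ref{thm:eigenvalue} are available, as are the constant $c_0\in(0,1]$ and the rate $\lambda\in(1,\lambda^*)$ of Lemma~\ref{lem:Miteratesgrowth}. The first step is a reduction to small $\varepsilon$: if $0<\varepsilon'<\varepsilon$, then for every $w\in\NN^p$ one has $|(1-\varepsilon')\mathfrak M(w),(1+\varepsilon')\mathfrak M(w)|\subseteq|(1-\varepsilon)\mathfrak M(w),(1+\varepsilon)\mathfrak M(w)|$, hence $A_n^{\varepsilon'}\subseteq A_n^{\varepsilon}$ for all $n$ and $\mathbb P(\bigcap_n A_n^{\varepsilon}\mid Z_0=z)\ge\mathbb P(\bigcap_n A_n^{\varepsilon'}\mid Z_0=z)$. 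It thus suffices to establish the conclusion for $\varepsilon$ small enough that $\alpha:=(1-\varepsilon)\lambda>1$, the general case following by choosing such an $\varepsilon'<\varepsilon$.

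Fix such an $\varepsilon$ and $\delta\in(0,\nicefrac1p]$. The events $A_n^{\varepsilon}$ decrease in $n$, so by continuity of $\mathbb P(\cdot\mid Z_0=z)$ from above and Lemma~\ref{lem:PAn},
\[
\mathbb P\Bigl(\bigcap_{n\ge 1}A_n^{\varepsilon}\ \Big|\ Z_0=z\Bigr)\ \ge\ 1-\kappa\sum_{i\ge 1}\mathbb E\bigl(\mathds 1_{A^{\varepsilon}_{i-1}}f(|Z_{i-1}|)\mid Z_0=z\bigr),\qquad z\in U_\delta,
\]
where $\kappa>0$ is the constant of Lemma~\ref{lem:PAn} (depending on $\delta$ and $\varepsilon$, but not on $z$). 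It remains to show that this series tends to $0$ as $|z|\to+\infty$ with $z\in U_\delta$.

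On the event $A^{\varepsilon}_{i-1}$ the trajectory is squeezed: iterating $Z_k\in|(1-\varepsilon)\mathfrak M(Z_{k-1}),(1+\varepsilon)\mathfrak M(Z_{k-1})|$ and using that $\mathfrak M$ is non-decreasing and positively homogeneous (Proposition~\ref{prop:post_hom_concave}) gives $(1-\varepsilon)^{i-1}\mathfrak M^{i-1}(z)\le Z_{i-1}\le(1+\varepsilon)^{i-1}\mathfrak M^{i-1}(z)$, so with $a_i:=(1-\varepsilon)^{i-1}|\mathfrak M^{i-1}(z)|$ (which is $\ge c_0\alpha^{i-1}|z|>0$ by Lemma~\ref{lem:Miteratesgrowth}) and $\rho:=(1+\varepsilon)/(1-\varepsilon)$ one has $|Z_{i-1}|\in[a_i,\rho a_i]$ there. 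Because $f$ fails to be monotone, I would exploit its two-term form: on $[a,\rho a]$ one has $y\,\mathbb P(V_{i,j}>y)\le\rho a\,\mathbb P(V_{i,j}>a)$ and $\mathbb E(V_{i,j}^2\mathds 1_{V_{i,j}\le y})/y\le\mathbb E(V_{i,j}^2\mathds 1_{V_{i,j}\le\rho a})/a$, whence $f(y)\le\rho\,(f(a)+f(\rho a))$ for $y\in[a,\rho a]$. Applying this with $a=a_i$ and taking expectations (the bound is deterministic) yields
\[
\sum_{i\ge 1}\mathbb E\bigl(\mathds 1_{A^{\varepsilon}_{i-1}}f(|Z_{i-1}|)\mid Z_0=z\bigr)\ \le\ \rho\sum_{n\ge 0}\Bigl(f\bigl((1-\varepsilon)^n|\mathfrak M^n(z)|\bigr)+f\bigl((1+\varepsilon)^n|\mathfrak M^n(z)|\bigr)\Bigr).
\]
Now I would apply Lemma~\ref{lem:someconv} to the deterministic sequences $x_n=(1-\varepsilon)^n|\mathfrak M^n(z)|$ and $y_n=(1+\varepsilon)^n|\mathfrak M^n(z)|$, which by Lemma~\ref{lem:Miteratesgrowth} satisfy $x_n\ge c_0\alpha^{n-k}x_k$ with $\alpha>1$ and $y_n\ge c_0((1+\varepsilon)\lambda)^{n-k}y_k$ with $(1+\varepsilon)\lambda>1$, both starting at $|z|$. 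This bounds both series by $C_\varepsilon\sum_{i,j}\mathbb E\bigl(V_{i,j}^2/((c_0|z|)\vee V_{i,j})\bigr)$ for a constant $C_\varepsilon$ independent of $z$ and of $\delta$; since $V_{i,j}^2/((c_0|z|)\vee V_{i,j})\le V_{i,j}\in L^1$ and $\to 0$ almost surely as $|z|\to+\infty$, dominated convergence makes this sum vanish in the limit, and the two displays give $\mathbb P(\bigcap_n A_n^{\varepsilon}\mid Z_0=z)\to 1$.

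The main obstacle is the non-monotonicity of $f$: the lower bound $|Z_{i-1}|\ge(1-\varepsilon)^{i-1}|\mathfrak M^{i-1}(z)|$ does not by itself control $f(|Z_{i-1}|)$, and replacing $f$ by a monotone majorant would require a $V\log V$ moment not assumed here. One must therefore combine the two-sided squeeze on the trajectory with the explicit expression of $f$ to reduce to $f$ evaluated along a genuinely geometric deterministic sequence, where Lemma~\ref{lem:someconv} applies; the related minor point that $(1-\varepsilon)\lambda>1$ only for small $\varepsilon$ is what is handled at the outset by the legitimate enlargement of $\varepsilon$.
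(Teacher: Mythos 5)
Your overall plan matches the paper's: combine Lemma~\ref{lem:PAn} with a bound on the error series via Lemma~\ref{lem:someconv}, and note the harmless reduction to $\varepsilon$ small enough that $\alpha:=(1-\varepsilon)\lambda>1$. However there is a genuine error in the series bound, and it comes from a detour that the paper does not take.

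On $A_{i-1}^{\varepsilon}$, iterating the squeeze gives exactly what you wrote, namely $(1-\varepsilon)^{i-1}\mathfrak M^{i-1}(z)\le Z_{i-1}\le(1+\varepsilon)^{i-1}\mathfrak M^{i-1}(z)$. With $a_i=(1-\varepsilon)^{i-1}|\mathfrak M^{i-1}(z)|$, the upper endpoint is $(1+\varepsilon)^{i-1}|\mathfrak M^{i-1}(z)|=\rho^{\,i-1}a_i$, \emph{not} $\rho\,a_i$. The window $[a_i,\rho^{\,i-1}a_i]$ widens geometrically in $i$, so your pointwise bound $f(y)\le\rho\bigl(f(a)+f(\rho a)\bigr)$ valid for $y\in[a,\rho a]$ does not cover the range of $|Z_{i-1}|$ once $i\ge 3$. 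Carrying the correct width through gives at best $\mathds 1_{A_{i-1}^{\varepsilon}}f(|Z_{i-1}|)\le\rho^{\,i-1}\bigl(f(a_i)+f(\rho^{\,i-1}a_i)\bigr)$, and the extra $\rho^{\,i-1}$ is not absorbed by Lemma~\ref{lem:someconv}, so the argument as written breaks.

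The deeper issue is a misreading of Lemma~\ref{lem:someconv}: you treat it as if it bounded $f(x_n)$ term by term, which would indeed require either monotonicity of $f$ or a two-sided deterministic squeeze. It does neither. It bounds the \emph{whole sum} $\sum_n f(x_n)$ via Fubini and a geometric rearrangement, using only the one-sided growth $x_n\ge c\alpha^{n-k}x_k$ and the starting value $x_0$. This makes it directly applicable, pathwise, to the random sequence $(|Z_n|)_n$: on $\bigcap_n A_n^{\varepsilon}$, the lower side of the squeeze together with Lemma~\ref{lem:Miteratesgrowth} gives $|Z_n|\ge c_0\alpha^{n-k}|Z_k|$ for all $n\ge k\ge 0$, so Lemma~\ref{lem:someconv} yields, pathwise (and hence for the indicator-truncated sum as well),
\begin{align*}
\sum_{n\ge 1}\mathds 1_{A_{n-1}^{\varepsilon}}f(|Z_{n-1}|)\ \le\ \frac{2\alpha}{c_0(\alpha-1)}\sum_{i=1}^p\sum_{j=1}^q\mathbb E\left(\frac{V_{i,j}^2}{(c_0|z|)\vee V_{i,j}}\right),
\end{align*}
a bound that depends on the path only through $|Z_0|=|z|$. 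No upper bound on $|Z_{n}|$ is needed, and the non-monotonicity of $f$ is a non-issue. Your instinct to ``replace $f$ by a monotone majorant'' (and the associated worry about a $V\log V$ moment) is precisely the trap that Lemma~\ref{lem:someconv} is designed to sidestep.
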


\begin{proof}
 Take $c_0>0$ and $\lambda\in(1,\lambda^*)$ from Lemma~\ref{lem:Miteratesgrowth}.   We assume without loss of generality that $\alpha:=(1-\varepsilon)\lambda>1$.
 For all $i\geq k\geq 1$, on the event $A_i$, we have then
 \[
 |Z_i|\geq |(1-\varepsilon)^k\mathfrak M^k(Z_{i-k})|\geq  c_0 \lambda^k (1-\varepsilon)^k |Z_{i-k}|= c_0 \alpha^k |Z_{i-k}|.
 \]
Hence, according to Lemma~\ref{lem:someconv}, almost surely, on the event $\{Z_0=z\}$,
\[
\sum_{n\geq 1}\mathds 1_{A_{n-1}}f(|Z_{n-1}|)\leq \frac{2\alpha}{c_0(\alpha-1)}\sum_{i=1}^p\sum_{j=1}^{q}\mathbb E\left(\frac{V_{i,j}^2}{(c_0|z|)\vee V_{i,j}}\right).
\]
We deduce that
\begin{align*}
\sum_{n\geq 1} \mathbb E\left(\mathds 1_{A_{n-1}}f(|Z_{n-1}|)\mid Z_0=z\right)&\leq \frac{2\alpha}{c_0(\alpha-1)}\sum_{i=1}^p\sum_{j=1}^{q}\mathbb E\left(\frac{V_{i,j}^2}{(c_0|z|)\vee V_{i,j}}\right).
\end{align*}
Letting $n\to+\infty$ in Lemma~\ref{lem:PAn}, we  obtain that there exists $c'_0>0$ such that, for all $z\geq \delta|z|\onesp$ (recall that $A_{n-1}\subset A_n$ for all $n\geq 1$), 
\begin{align*}
\mathbb P\left(\bigcap_{n\geq 1} A_n\mid Z_0=z\right)&=\lim_{n\to+\infty} \mathbb P\left(\bigcap_{k= 1}^n A_n\mid Z_0=z\right)\\
&\geq 1-c'_0\sum_{n\geq 1} \mathbb E\left(\mathds 1_{A_{n-1}}f(|Z_{n-1}|)\mid Z_0=z\right)\\
&\geq 1-\frac{2c'_0\alpha}{c_0(\alpha-1)}\sum_{i,j}\mathbb E\left(\frac{V_{i,j}^2}{(c_0|z|)\vee V_{i,j}}\right).
\end{align*}
But $V_{i,j}$ is integrable for all $i,j$ and hence, by dominated convergence theorem,
\begin{align*}
\mathbb E\left(\frac{V_{i,j}^2}{(c_0|z|)\vee V_{i,j}}\right)\xrightarrow[|z|\to+\infty]{}0.
\end{align*}
This concludes the proof of Lemma~\ref{lem:capAn}.
\end{proof}

Lemma~\ref{lem:capAn} is useful for large starting values $z$ such that $z\geq \delta|z|\onesp$. In order to use it for all initial positions, we show that such values are eventually reached by the process in finite time.
\begin{lem}
    \label{lem:minoration}
    Assume that $\mathfrak M$ is bounded on $S$. There exist $\delta_0\in (0,\nicefrac1p]$ and $n_0\geq 1$ such that, for all $r>0$,
    for all $\eta \in (0,1)$, there exists $\rho>0$ such that
    \[
    \inf_{z\in \mathbb N^p,\ |z|\geq \rho} \mathbb P\left(\tau_{\delta_0,r}\leq n_0\mid Z_0=z\right)\geq 1-\eta,
    \]
    where
    \[
    \tau_{\delta_0,r}=\inf\{n\geq 0,\ Z_n\geq |Z_n|\delta_0\onesp\text{ and }|Z_n|\geq r\}.
    \] 
    In addition,
    \[
    \inf_{z\in \mathbb N^p} \mathbb P\left(\tau_{\delta_0,r}\wedge T_0<+\infty\mid Z_0=z\right)=1,
    \]
    with $T_0:=\inf \{n\geq 0, \, |Z_n|=0 \}$ the extinction time of the process.
\end{lem}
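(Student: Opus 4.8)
The plan is to fix a pair $(n_0,\delta_0)$, depending only on the model, using the primitivity of $\mathfrak M$; to prove the first (quantitative) assertion by a compactness argument built on the law of large numbers of Theorem~\ref{thm:LGN1}; and then to deduce the second assertion from the first by a Borel--Cantelli-type argument using the Markov property and the transitivity Assumption~\ref{asu:bigasu}.\ref{asu:transience}. Since $\mathfrak M$ is bounded on $S$ and positively homogeneous, $|\mathfrak M^n(z)|\le C^n|z|$ for some $C>0$, and $\mathfrak M$ is primitive by Proposition~\ref{prop:primitivity}: I would first fix $n_0$ with $\mathfrak M^{n_0}(e_i)>0$ (all components) for every $i$. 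As $\mathfrak M^{n_0}$ is superadditive and positively homogeneous, $\mathfrak M^{n_0}(u)\ge\sum_i u_i\mathfrak M^{n_0}(e_i)\ge m$ componentwise for all $u\in S$, where $m_j:=\min_i\mathfrak M^{n_0}_j(e_i)>0$, while $|\mathfrak M^{n_0}(u)|\le C^{n_0}$ on $S$; I would then take $\delta_0:=\min_j m_j/(4(C^{n_0}+1))$, which lies in $(0,\nicefrac1p]$.

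For the first assertion, fix $r>0$, $\eta\in(0,1)$ and argue by contradiction: suppose there are $z^{(k)}\in\NN^p$ with $|z^{(k)}|\to\infty$ and $\PP(\tau_{\delta_0,r}\le n_0\mid Z_0=z^{(k)})<1-\eta$. Passing to a subsequence, $z^{(k)}/|z^{(k)}|\to u\in S$; write $J=\{i:u_i>0\}$, and for a small fixed $\varepsilon>0$ set $\hat z^{(k)}_i=z^{(k)}_i\mathds 1_{\{i\in J\}}$ and $\check z^{(k)}_i=z^{(k)}_i\mathds 1_{\{i\in J\}}+\lceil\varepsilon|z^{(k)}|\rceil\mathds 1_{\{i\notin J\}}$, so that $\hat z^{(k)}\le z^{(k)}\le\check z^{(k)}$ for $k$ large, with $\hat z^{(k)}\sim|z^{(k)}|\,u$ (exact zeros on the vanishing components of $u$) and $\check z^{(k)}\sim|z^{(k)}|\,u^\varepsilon$, where $u^\varepsilon:=u+\varepsilon\sum_{i\notin J}e_i$ has strictly positive entries. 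Applying Theorem~\ref{thm:LGN1} (to completions of the subsequences $(\hat z^{(k)})$ and $(\check z^{(k)})$ into full sequences), the processes started from $\hat z^{(k)}$ and from $\check z^{(k)}$, divided by $|z^{(k)}|$, converge a.s.\ at time $n_0$ to $\mathfrak M^{n_0}(u)$ and $\mathfrak M^{n_0}(u^\varepsilon)$ respectively. Since the process is monotone in its initial condition (couple with a common offspring array: $W_1$ is componentwise increasing in $Z_0$ and $\xi$ is nondecreasing), $Z^{z^{(k)}}_{n_0}$ stochastically dominates the process started from $\hat z^{(k)}$ and is dominated by the one started from $\check z^{(k)}$. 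As $\{Z_{n_0}\ge a\}$ is increasing and $\{|Z_{n_0}|\le b\}$ is decreasing, this gives that, with $\PP(\cdot\mid Z_0=z^{(k)})$-probability tending to $1$, both $Z^{z^{(k)}}_{n_0}\ge\tfrac12|z^{(k)}|\,m$ and $|Z^{z^{(k)}}_{n_0}|\le(|\mathfrak M^{n_0}(u^\varepsilon)|+1)|z^{(k)}|$ hold; on that event $Z^{z^{(k)}}_{n_0,j}/|Z^{z^{(k)}}_{n_0}|\ge m_j/\bigl(2(C^{n_0}(1+\varepsilon p)+1)\bigr)\ge\delta_0$ for $\varepsilon$ small enough, and $|Z^{z^{(k)}}_{n_0}|\to+\infty\ge r$, so $\tau_{\delta_0,r}\le n_0$. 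Thus $\PP(\tau_{\delta_0,r}\le n_0\mid Z_0=z^{(k)})\to1$, contradicting the choice of the $z^{(k)}$.

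For the second assertion, I would apply the first with $\eta=\tfrac12$ to obtain $\rho>0$ such that $\PP(\tau_{\delta_0,r}\le n_0\mid Z_0=z)\ge\tfrac12$ whenever $|z|\ge\rho$. Fix $z\in\NN^p$ and set $B=\{\tau_{\delta_0,r}=+\infty\}\cap\{T_0=+\infty\}$. On $B$ one has $|Z_n|\ge1$ for all $n$, hence by Assumption~\ref{asu:bigasu}.\ref{asu:transience}, $\PP(\cdot\mid Z_0=z)$-a.s.\ on $B$, $|Z_n|\to+\infty$; in particular $|Z_{k(n_0+1)}|\ge\rho$ for all $k$ large, while $\tau_{\delta_0,r}=+\infty$ forces the events $E_k:=\{\exists\,j\in\{0,\dots,n_0\}:\,Z_{k(n_0+1)+j}\in U_{\delta_0}\ \text{and}\ |Z_{k(n_0+1)+j}|\ge r\}$ to fail for every $k$. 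So $B$ lies, up to a negligible set, in $\bigcup_K\bigcap_{k\ge K}G_k$ with $G_k:=\{|Z_{k(n_0+1)}|\ge\rho\}\cap E_k^c$. Since $\mathds 1_{G_k}$ is $\mathcal F_{(k+1)(n_0+1)}$-measurable and, by the Markov property together with the first assertion, $\PP(E_k^c\mid\mathcal F_{k(n_0+1)})\le\tfrac12$ on $\{|Z_{k(n_0+1)}|\ge\rho\}$, conditioning successively on $\mathcal F_{(K+L)(n_0+1)},\mathcal F_{(K+L-1)(n_0+1)},\dots$ yields $\PP\bigl(\bigcap_{k=K}^{K+L}G_k\mid Z_0=z\bigr)\le2^{-L}$ for all $L\ge0$, whence $\PP(B\mid Z_0=z)=0$. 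As $z$ was arbitrary, $\inf_z\PP(\tau_{\delta_0,r}\wedge T_0<+\infty\mid Z_0=z)=1$. The main obstacle is the first assertion: the pair $(n_0,\delta_0)$ must be chosen once and for all and must work uniformly over all sufficiently large $z$ and over all directions; primitivity of $\mathfrak M$ supplies such a pair, and the two-sided sandwich $\hat z^{(k)}\le z^{(k)}\le\check z^{(k)}$ is essential, both because the cone membership $Z_{n_0}\in U_{\delta_0}$ is not a monotone event and because a limiting direction $u\in S$ may have vanishing components even when the $z^{(k)}$ do not.
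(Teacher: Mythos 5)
Your proof is correct, but takes a genuinely different route on both parts. For the first (quantitative) assertion, the paper constructs the estimate directly: it fixes $n_0,\delta_1$ by primitivity, splits the initial condition $z=\sum_i z_ie_i$ and uses the auxiliary Lemma~\ref{lem:superaddi} (superadditivity plus independence of the offspring arrays) to bound $\PP(Z_{n_0}\geq\sum_i\mathds 1_{z_i\geq k_0}z_i\tfrac{\delta_1}2\onesp\mid Z_0=z)$ from below by a product over $i$ of single-axis probabilities, each controlled by Theorem~\ref{thm:LGN1} applied to $z_m=ke_i$; the complementary upper bound on $|Z_{n_0}|$ comes from domination by the process started from $|z|\onesp$. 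Your compactness-and-contradiction argument replaces this explicit additive splitting with an extraction of a limiting direction $u\in S$ plus a two-sided stochastic sandwich $\hat z^{(k)}\le z^{(k)}\le\check z^{(k)}$ matched to the possibly vanishing coordinates of $u$ — this is necessary precisely because $U_{\delta_0}$-membership is not monotone and $u$ may lie on the boundary of $S$, as you point out. Both routes rest on Theorem~\ref{thm:LGN1}, but the paper's is more constructive (explicit thresholds $k_1,k_2$, no subsequence extraction), while yours avoids Lemma~\ref{lem:superaddi} at the cost of some bookkeeping with subsequences and the completion into a full sequence. For the second assertion, the paper uses the finiteness of $\{z:0<|z|\le\rho\}$ to get a uniform time $n_1$ after which the process has either died or reached level $\rho$, then concludes by the strong Markov property at $\tau'_\rho\wedge T_0$ and letting $\eta\to0$; you instead run a block-wise conditioning argument giving $\PP\bigl(\bigcap_{k=K}^{K+L}G_k\mid Z_0=z\bigr)\le 2^{-L}$ and conclude in a Borel--Cantelli spirit. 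Both are sound; the paper's is shorter because of the finite-state uniformization, while yours makes the geometric decay along blocks explicit.
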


\begin{proof}
Let $n_0\geq 1$ and $\delta_1>0$ such that, for all $i\in\{1,\ldots,p\}$, $\mathfrak M^{n_0}(e_i)>\delta_1\onesp$. Such $n_0$ and $\delta_1$ exist as $\mathfrak M$ is primitive by Proposition~\ref{prop:primitivity}.
In order to prove the first inequality of the lemma, we show that there exists $\alpha>0$ such that, for all $\eta \in (0,1)$, there exists $k_1>0$ such that
\begin{align}
    \label{eq:boundzn0overz}
    \mathbb P\left(Z_{n_0}\geq |z| \delta_2\onesp\mid Z_0=z\right)\geq 1-\eta/2\,, \qquad \forall |z|\geq k_1
\end{align}
with $\delta_2:=\nicefrac{\delta_1}{2p}$, and there exists $k_2>0$ such that
\begin{align}
    \label{eq:boundzn0overz2}
    \mathbb P\left(\alpha\,|z|>|Z_{n_0}|\mid Z_0=z\right)\geq 1- \eta/2\, \qquad \forall |z|\geq k_2.
 \end{align}

Once this is proved, we set  $\delta_0=\delta_2/\alpha$ (which does not depend on $\eta$). Then for $r>0$, setting $\rho = k_1\vee k_2 \vee \nicefrac r{\delta_2 p}$, 
we obtain from the general property $\mathbb P(A\cap B)\geq \mathbb P(A)+\mathbb P(B)-1$ that, for all $z$ such that $|z|\geq \rho$,
\begin{align*}
    \mathbb P\left(\tau_{\delta_0,r}\leq n_0\mid Z_0=z\right)
  & \geq \mathbb P \left(Z_{n_0}\geq |Z_{n_0}|\delta_0\onesp\text{ and }|Z_{n_0}|\geq r \text{ and } |Z_{n_0}| < \alpha\,|z| \mid Z_0=z\right)
 \\
  & \geq \mathbb P \left(Z_{n_0}\geq \alpha |z|\,\delta_0\onesp \text{ and } |Z_{n_0}| < \alpha\,|z| \mid Z_0=z\right)
  \\
  &\geq \mathbb P \left(Z_{n_0}\geq  |z|\delta_2\onesp \mid Z_0=z\right)  +\mathbb P \left(|Z_{n_0}| < \alpha\,|z| \mid Z_0=z\right)-1\\
  &\geq 1 - \eta,
    \end{align*}
    where we used for the second inequality that $|\alpha|z|\delta_0\onesp|\geq r$ for all $|z|\geq \rho$.
   
Let us prove \eqref{eq:boundzn0overz} and \eqref{eq:boundzn0overz2}. By definition of $n_0\geq 1$ and $\delta_1>0$ and by Theorem~\ref{thm:LGN1}, we deduce that there exists $k_0\geq 1$ such that, for all $k\geq k_0$ and all $i\in\{1,\ldots,p\}$, 
    \begin{align*}
    \mathbb P(Z_{n_0}\geq k\delta_1\onesp/2\mid Z_0=ke_i)\geq (1-\eta/2)^{\nicefrac1p}.
    \end{align*}

     Using the auxiliary lemma in Appendix~\ref{sec:annex1}, we obtain
    \begin{align*}
    \mathbb P\left(Z_{n_0}\geq \sum_{i=1}^p \mathds{1}_{z_i\geq k_0}z_i \delta_1\onesp/2\mid Z_0=z\right)&\geq\prod_{i=1}^p \mathbb P\left(Z_{n_0} \geq \mathds{1}_{z_i\geq k_0} z_i \delta_1\onesp/2 \mid Z_0=z_ie_i\right)\\
    &\geq \prod_{i=1}^p ( 1-\eta/2)^{\nicefrac1p}=1-\eta/2.
    \end{align*}

     If in addition $|z|\geq k_1:=pk_0$, then 
    \[
    \frac{\delta_1}{2}\sum_{i=1}^p \mathds{1}_{z_i\geq k_0}z_i\geq \frac{\delta_1}{2}(|z|-(p-1)k_0)
    =\frac{\delta_1}{2}|z|\left(1-\frac{(p-1)k_0}{|z|}\right)\geq \frac{\delta_1}{2\,p} |z|=\delta_2\,|z|
    \]
   and \eqref{eq:boundzn0overz} holds.
   
   Set $\alpha = 2|\mathfrak M^{n_0}(\onesp)|$ (note that by definition of $n_0$, we have $\mathfrak M^{n_0}(\onesp)>0$). 
By superadditivity, we have
\[
\PP(|Z_{n_0}|\geq \alpha |z| \mid Z_0=z)\leq \PP(|Z_{n_0}| \geq \alpha|z| \mid Z_0=|z|\onesp).
\]
Now Theorem~\ref{thm:LGN1} entails that 
 \[
\PP(|Z_{n_0}| \geq \alpha m\mid Z_0=m\onesp)= \PP\left(\left.\frac{|Z_{n_0}|}{m} \geq 2|\mathfrak M^{n_0}(\onesp)| \,\right|\,Z_0=m\onesp\right)\xrightarrow[m\to+\infty]{} 0.
 \]
   Using this, we conclude that for all $\eta\in (0,1)$, there exists $k_2>0$ such that, for all $|z|\geq k_2$,
   \[
   \PP(|Z_{n_0}|\geq \alpha |z|\mid Z_0=z)\leq \eta/2.
   \]
    and  \eqref{eq:boundzn0overz2} holds, which concludes the proof of the first part of the lemma.

    \medskip
    
    In addition, setting $\tau'_{\rho}=\inf\{n\geq 0,|Z_n|\geq \rho\}$, we have for all starting point in $z\in \NN^p\setminus\{0\}$ such that $|z|\leq \rho$, $\mathbb P(\tau'_{\rho}\wedge T_0<+\infty\mid Z_0=z)=1$ (by Assumption~\ref{asu:transience} and because $Z_n\in \NN^p$ almost surely), we deduce that, for all $\eta\in(0,1)$, there exists $n_1\geq 1$ such that
    \begin{align*}
    \mathbb P(\tau'_{\rho}\wedge T_0\leq n_1\mid Z_0=z)\geq 1-\eta,\quad \forall |z|\leq \rho.
    \end{align*}
    Using the strong Markov property at time $\tau'_{\rho}\wedge T_0$, we deduce that
    \begin{align*}
    \mathbb P\left(\tau_{\delta_0,r}\wedge T_0\leq n_0+n_1\mid Z_0=z\right)\geq (1-\eta)^2,\quad\forall z\in\mathbb N^p\setminus\{0\}.
    \end{align*}
    Since this is true for all $\eta\in(0,1)$ this concludes the proof of Lemma~\ref{lem:minoration}.
\end{proof}

We are now in position to conclude the proof of Theorem~\ref{thm:integrabilitycriterion}. Consider $\delta_0\in (0,\nicefrac1p]$ and $n_0\in \NN^*$ given by Lemma~\ref{lem:minoration}. According to Lemma~\ref{lem:capAn}, for all $\varepsilon\in (0,1)$, for all $\eta\in (0,1)$, there exists $r>0$ such that, for all $z\in U_{\delta_0}$ with $|z|\geq r$,
\begin{align}
\label{eq:eqstart}
\mathbb P\left(\forall n\geq 0,\ Z_{n+1}\in|(1-\varepsilon)\mathfrak M(Z_n),(1+\varepsilon)\mathfrak M(Z_n)|\mid Z_0=z\right)\geq 1-\eta/2.
\end{align}
In addition, Lemma~\ref{lem:minoration} entails that there exists $\rho>0$ such that, for all $|z|\geq \rho$,
\[
\mathbb P\left(\tau_{\delta_0,r}\leq n_0\mid Z_0=z\right)\geq 1-\eta/2,
\]
and hence, using the strong Markov property, we deduce that for all $|z|\geq \rho$,
\begin{align*}
\mathbb P\left(\tau_{\delta_0,r}\leq n_0\text{ and }\forall n\geq \tau_{\delta_0,r},\  Z_{n+1}\in|(1-\varepsilon)\mathfrak M(Z_n),(1+\varepsilon)\mathfrak M(Z_n)|\mid Z_0=z\right)\geq 1-\eta.
\end{align*}
Finally, this implies that 
\begin{align*}
\mathbb P\left(Z_{n_0}\neq 0\text{ and }\forall n\geq n_0,\  Z_{n+1}\in|(1-\varepsilon)\mathfrak M(Z_n),(1+\varepsilon)\mathfrak M(Z_n)|\mid Z_0=z\right)\geq 1-\eta.
\end{align*}
This concludes the proof of 
the first part of Theorem~\ref{thm:integrabilitycriterion}.

\smallskip
We now prove Corollary~\ref{cor:integrabilitycriterion}, the result holds true trivially if $z=0$. Hence, using the Markov property at time $\tau_{\delta_0,r}\wedge T_0$, we deduce from~\eqref{eq:eqstart} and the fact that $\mathbb P\left( \tau_{\delta_0,r}\wedge T_0<+\infty\right)=1$, that for all $z\in \mathbb N^p$,
\begin{align*}
\mathbb P\left( \tau_{\delta_0,r}\wedge T_0<+\infty\text{ and } \forall n\geq \tau_{\delta_0,r}\wedge T_0,\ Z_{n+1}\in|(1-\varepsilon)\mathfrak M(Z_n),(1+\varepsilon)\mathfrak M(Z_n)|\mid Z_0=z\right)\geq 1-\eta/2.
\end{align*}
Hence, 
\begin{align*}
\mathbb P\left(\exists N\geq 0\text{ such that, } \forall n\geq N,\ Z_{n+1}\in|(1-\varepsilon)\mathfrak M(Z_n),(1+\varepsilon)\mathfrak M(Z_n)|\mid Z_0=z\right)\geq 1-\eta/2.
\end{align*}
Since the left hand term does not depend on $\eta$, this conclude the proof of Corollary~\ref{cor:integrabilitycriterion}.

\smallskip 
For the second part of Theorem~\ref{thm:integrabilitycriterion}, we simply observe that, $\mathbb P(\cdot\mid Z_0=z)$-almost surely, for all $\varepsilon>0$, there exists $N\geq 0$ (random) such that for all $m,k\geq 0$, for all $n\geq N$, $Z_{n+m}\in|(1-\varepsilon)^m \mathfrak M^m(Z_n),(1+\varepsilon)^m \mathfrak M^m(Z_n)|$, hence
\begin{align*}
\frac{Z_{n+m+k}}{|Z_{n+m}|}&\in \left|\frac{(1-\varepsilon)^{m+k} \mathfrak M^{m+k}(Z_n)}{(1+\varepsilon)^m |\mathfrak M^m(Z_n)|},\frac{(1+\varepsilon)^{m+k} \mathfrak M^{m+k}(Z_n)}{(1-\varepsilon)^m |\mathfrak M^m(Z_n)|}\right|\\
&\subset \left|\frac{(1-\varepsilon)^{m+k} \mathfrak M^{m+k}(Z_n/|Z_n|)}{(1+\varepsilon)^m |\mathfrak M^m(Z_n/|Z_n|)|},\frac{(1+\varepsilon)^{m+k} \mathfrak M^{m+k}(Z_n/|Z_n|)}{(1-\varepsilon)^m |\mathfrak M^m(Z_n/|Z_n|)|}\right|\\
&\subset \left|\frac{(1-\varepsilon)^{m+k} }{(1+\varepsilon)^m }\inf_{u\in S}\frac{ \mathfrak M^{m+k}(u)}{|\mathfrak M^m(u)|},\frac{(1+\varepsilon)^{m+k} }{(1-\varepsilon)^m }\sup_{u\in S}\frac{ \mathfrak M^{m+k}(u)}{|\mathfrak M^m(u)|}\right|,
\end{align*} 
where the infimum and supremum should be understood component-wise.
But, according to Theorem~\ref{thm:eigenvalue}~(\ref{item:thmKrausev}), $\frac{ \mathfrak M^{m+k}(u)}{|\mathfrak M^m(u)|}$ converges uniformly in $u\in S$ toward $(\lambda^*)^k z^*$ when $m\to+\infty$, and hence, for all $\varepsilon'>0$, choosing $m$ large enough, we have, for all $n\geq N$,
\begin{align*}
\frac{Z_{n+m+k}}{|Z_{n+m}|}&\in \left|\frac{(1-\varepsilon)^{m+k} }{(1+\varepsilon)^m }(1-\varepsilon')(\lambda^*)^k z^*,\frac{(1+\varepsilon)^{m+k} }{(1-\varepsilon)^m }(1+\varepsilon')(\lambda^*)^k z^*\right|.
\end{align*}
We deduce that (again the $\liminf$ and $\limsup$ should be understood component-wise)
\begin{align*}
\frac{(1-\varepsilon)^{m+k} }{(1+\varepsilon)^m }(1-\varepsilon')(\lambda^*)^k z^*\leq \liminf_{n\to+\infty} \frac{Z_{n+m+k}}{|Z_{n+m}|}\leq \limsup_{n\to+\infty} \frac{Z_{n+m+k}}{|Z_{n+m}|}\leq \frac{(1+\varepsilon)^{m+k} }{(1-\varepsilon)^m }(1+\varepsilon')(\lambda^*)^k z^*.
\end{align*}
Taking, first the limit when $\varepsilon\to 0$ ($m$ depends on $\varepsilon'$ but not on $\varepsilon$), then the limit when $\varepsilon'\to 0$, concludes the proof of Theorem~\ref{thm:integrabilitycriterion}.

\section{Identification of an intrinsic supermartingale and proof of Theorem~\ref{thm:type_profiles}}
\label{sec:supermartingale}

In this section we prove Theorem~\ref{thm:type_profiles}. In order to do so, we start by introducing the process
\begin{equation}
\label{Cn}
C_n=\dfrac{\mathcal P (Z_n)}{(\lambda^*)^n},
\end{equation}
where $\mathcal P$ is the function defined by \eqref{eq:function_P}.

We claim that a convergence (almost surely or in $L^1$) of the process $(C_n)_{n\in\NN}$ to a non-negative random variable $\mathcal C$ implies the convergence of the process $\left(\frac{Z_n}{(\lambda^*)^n}\right)_{n\in\NN}$. On the event of extinction, the result is trivial. On the event of survival, as $\mathcal P$ is positively homogeneous by Theorem~\ref{thm:eigenvalue}, we have 
\[
\dfrac{Z_n}{(\lambda^*)^n}=C_n\dfrac{Z_n}{|Z_n|} \dfrac{1}{\mathcal P\left(\frac{Z_n}{|Z_n|}\right)}.
\]
Applying Theorem~\ref{thm:integrabilitycriterion} and if $C_n\to \mathcal C$ we deduce that 
\[
\dfrac{Z_n}{(\lambda^*)^n}\longrightarrow \mathcal C z^*\dfrac{1}{\mathcal P(z^*)}=\mathcal Cz^*,\text{ as }n\to+\infty,
\]
and so the proof is complete. We also remark that the convergence of the sequence $\left(\frac{Z_n}{(\lambda^*)^n}\right)_{n\in\NN}$ is of the same type as the convergence of $(C_n)_{n\in\NN}$.

We divide this section into five parts. The first and second parts are dedicated to the proof of both results in Theorem~\ref{thm:type_profiles}. In the third part, we prove the convergence in $L^1$ under stronger assumptions and prove that, under these assumptions, the limit is non-degenerate at $0$. In the fourth part, we prove Proposition~\ref{prop:suffconditions_convergence} and Proposition~\ref{lem:cows_bulls}.

\subsection{Almost sure convergence.} In this section, we prove the first statement of Theorem~\ref{thm:type_profiles}. The proof comes from the following result.

\begin{lem}\label{lem:surmartingale} The sequence $(C_n)_{n\in\mathbb{N}}$ is a supermartingale with respect to $(\mathcal{F}_n)_{n\in \NN}$, the natural filtration of $(Z_n)_{n\in\NN}$.
\end{lem}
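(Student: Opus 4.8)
The plan is to show that $\EE(C_{n+1}\mid \mathcal F_n)\leq C_n$ almost surely, i.e. that $\EE(\mathcal P(Z_{n+1})\mid \mathcal F_n)\leq \lambda^*\,\mathcal P(Z_n)$. By the Markov property it suffices to prove that, for every deterministic $z\in\NN^p$,
\[
\EE\big(\mathcal P(Z_1)\mid Z_0=z\big)\leq \lambda^*\,\mathcal P(z).
\]
First I would record the two structural facts about $\mathcal P$ supplied by Theorem~\ref{thm:eigenvalue}: $\mathcal P$ is concave and positively homogeneous on $\RR_+^p$, and $\mathcal P(\mathfrak M(x))=\lambda^*\mathcal P(x)$ for all $x$ (this last identity because $L(\mathfrak M(x))=\lim_k \mathfrak M^k(\mathfrak M(x))/(\lambda^*)^k = \lambda^* L(x)$, and $L=\mathcal P(\cdot)z^*$). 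So the target inequality is exactly $\EE(\mathcal P(Z_1)\mid Z_0=z)\leq \mathcal P(\mathfrak M(z))$.

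The key step is a Jensen-type estimate. Given $Z_0=z$, write $Z_1=\xi(W_1)$ with $W_{1,j}=\sum_{i=1}^p\sum_{k=1}^{z_i}V_{i,j}^{(k,1)}$. Since $\mathcal P$ is concave, for any $\RR_+^q$-valued integrable random vector $W$ and any deterministic bound we would like to compare $\EE(\mathcal P(\xi(W)))$ with $\mathcal P(\EE(W)\,\cdots)$ — but $\xi$ itself need not be concave, so one cannot apply Jensen directly to $\mathcal P\circ\xi$. The route around this is to use Corollary~\ref{cor:M_second_def}/the bound $\xi(w)\le \mathfrak M(\lfloor w \rfloor)$-type domination: more precisely, I would use that $\mathcal P\circ\mathfrak M$ is concave (composition of the concave nondecreasing-in-each-coordinate $\mathcal P$ with the concave $\mathfrak M$) together with Lemma~\ref{lem:domM}. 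Concretely, $\mathcal P$ is nondecreasing (it is the increasing limit-direction functional; alternatively $\mathcal P(x)\le\mathcal P(y)$ whenever $x\le y$ because $\mathfrak M$ is nondecreasing and $\mathcal P$ inherits this), hence $\mathcal P(Z_1)=\mathcal P(\xi(W_1))\le \mathcal P(\mathfrak M(W_1'))$ for a suitable dominating vector $W_1'$ built from $|W_1|$ exactly as in the $L^1$ step of the proof of Theorem~\ref{thm:LGN1} (equation~\eqref{eq:Zisbounded}: $Z_1\le |W_1|\mathfrak M(z_0)$). Then I apply conditional Jensen to the concave function $\mathcal P\circ\mathfrak M$: $\EE(\mathcal P(\mathfrak M(W_1))\mid Z_0=z)\le \mathcal P(\mathfrak M(\EE(W_1\mid Z_0=z)))=\mathcal P(\mathfrak M(z\VV))$, and finally $\mathcal P(\mathfrak M(z\VV))\le \mathcal P(\mathfrak M(\mathfrak M(z)))=\lambda^*\mathcal P(\mathfrak M(z))=(\lambda^*)^2\mathcal P(z)$ — wait, that is off by a factor; the correct chain is $\xi(z\VV)\le \mathfrak M(z)$ (second equality in \eqref{eq:defM} with $m=1$), so directly $\EE(\mathcal P(Z_1)\mid Z_0=z)\le \mathcal P(\mathfrak M(\EE(W_1)))=\mathcal P(\mathfrak M(z\VV))$ and then monotonicity of $\mathcal P$ with $\mathfrak M(z\VV)\le$ ... — the clean statement to push through is $\EE(\mathcal P(Z_1)\mid Z_0=z)\le \mathcal P(\EE(\xi(W_1)))\le \mathcal P(\xi(\EE W_1))$ is false, so instead I use $\mathcal P(Z_1)\le \mathcal P(\mathfrak M(|W_1|z_0))=|W_1|\,\mathcal P(\mathfrak M(z_0))$ by homogeneity, take expectations to get $\le \EE(|W_1|\mid Z_0=z)\mathcal P(\mathfrak M(z_0))=|z\VV|\,\mathcal P(\mathfrak M(z_0))$; this gives integrability of $C_{n+1}$ but not the supermartingale inequality, so the actual argument must instead invoke Lemma~\ref{lem:domM} at the level of $\mathcal P$: $\EE(\mathcal P(Z_1)\mid Z_0=z)\le \mathcal P(\EE(Z_1\mid Z_0=z))$ by conditional Jensen for the concave $\mathcal P$, then $\le \mathcal P(\mathfrak M(z))=\lambda^*\mathcal P(z)$ by Lemma~\ref{lem:domM} and monotonicity of $\mathcal P$.

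So the streamlined proof is: (i) $C_n$ is integrable — bound $\mathcal P(Z_n)\le c|Z_n|$ since $\mathcal P$ is concave positively homogeneous hence sublinear, and $\EE|Z_n|<\infty$ by the branching structure and integrability of $V$; (ii) conditional Jensen applied to the concave map $\mathcal P$ gives $\EE(\mathcal P(Z_{n+1})\mid\mathcal F_n)\le \mathcal P\big(\EE(Z_{n+1}\mid\mathcal F_n)\big)$; (iii) the Markov property gives $\EE(Z_{n+1}\mid\mathcal F_n)=\EE(Z_1\mid Z_0=z)\big|_{z=Z_n}$, which is $\le \mathfrak M(Z_n)$ by Lemma~\ref{lem:domM}; (iv) $\mathcal P$ is nondecreasing, so $\mathcal P(\EE(Z_{n+1}\mid\mathcal F_n))\le \mathcal P(\mathfrak M(Z_n))=\lambda^*\mathcal P(Z_n)$ using the eigen-identity $\mathcal P\circ\mathfrak M=\lambda^*\mathcal P$; (v) divide by $(\lambda^*)^{n+1}$ to conclude $\EE(C_{n+1}\mid\mathcal F_n)\le C_n$.

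The main obstacle is establishing the two properties of $\mathcal P$ that steps (iv) rely on and that are only implicitly available: that $\mathcal P$ is monotone nondecreasing, and the identity $\mathcal P(\mathfrak M(x))=\lambda^*\mathcal P(x)$. Monotonicity follows because $\mathfrak M$ is nondecreasing in each coordinate (inherited from $\xi$), hence $\mathfrak M^k$ is too, so $x\le y \Rightarrow \mathfrak M^k(x)\le\mathfrak M^k(y) \Rightarrow |\mathfrak M^k(x)|\le|\mathfrak M^k(y)|$, and dividing by $(\lambda^*)^k$ and letting $k\to\infty$ gives $\mathcal P(x)\le\mathcal P(y)$. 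The eigen-identity follows from the definition \eqref{eq:function_P}: $\mathcal P(\mathfrak M(x))=\lim_k |\mathfrak M^k(\mathfrak M(x))|/(\lambda^*)^k=\lim_k |\mathfrak M^{k+1}(x)|/(\lambda^*)^k=\lambda^*\lim_k|\mathfrak M^{k+1}(x)|/(\lambda^*)^{k+1}=\lambda^*\mathcal P(x)$. Once these are in hand the remaining steps are routine; I would spell out (i) carefully to make sure $C_0=\mathcal P(z)<\infty$ is used correctly and that the sublinear bound on $\mathcal P$ genuinely gives integrability of every $C_n$.
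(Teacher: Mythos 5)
Your final ``streamlined proof'' (steps (i)--(v)) is exactly the paper's argument: conditional Jensen applied to the concave, nondecreasing $\mathcal P$, the bound $\EE(Z_{n+1}\mid\mathcal F_n)\le\mathfrak M(Z_n)$ from Lemma~\ref{lem:domM}, and the eigen-identity $\mathcal P\circ\mathfrak M=\lambda^*\mathcal P$. The exploratory middle section (trying to apply Jensen to $\mathcal P\circ\xi$, detouring through $|W_1|$ dominations) is a dead end that you correctly abandon, and the explicit verifications of monotonicity of $\mathcal P$ and of the identity $\mathcal P\circ\mathfrak M=\lambda^*\mathcal P$ are sound and merely make explicit what the paper leaves to Theorem~\ref{thm:eigenvalue}.
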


\begin{proof}
First, from the definition  of $\mathcal P$ and Theorem~\ref{thm:eigenvalue}, it follows that $\mathcal P \circ \mathfrak M = \lambda^* \mathcal P$. Then we recall that, by Lemma~\ref{lem:domM}, we have for all $n\in\NN$,
\[\EE(Z_{n+1} | \mathcal{F}_n) \leq \mathfrak M(Z_n).\]

Hence, Jensen's inequality together with the fact that $\mathcal P$ is increasing (since $\mathfrak M$ is) and concave imply that
\[\EE(C_{n+1}|\mathcal{F}_n)=\dfrac{\EE(\mathcal P(Z_{n+1})|\mathcal{F}_n)}{(\lambda^*)^{n+1}}\leq \dfrac{\mathcal P(\EE(Z_{n+1}|\mathcal{F}_n))}{(\lambda^*)^{n+1}}\leq \dfrac{\mathcal P(\mathfrak M(Z_n))}{(\lambda^*)^{n+1}}=\dfrac{\mathcal P(Z_n)}{(\lambda^*)^n}=C_n.\]

\end{proof}
    
    Since $(C_n)_{n\in\NN}$ is a non-negative supermartingale, then it exists a non-negative random variable $\mathcal C$ such that $C_n\xrightarrow[n\to+\infty]{\text{a.s.}}\mathcal C$.

    We now prove the convergence of the process $\left(\dfrac{W_n}{(\lambda^*)^n}\right)_{n\in\NN}$ in Theorem~\ref{thm:type_profiles}.
    We follow the proof for the single-type case presented in \cite{gonzalez1996limit}
    
    Let $N_n=\dfrac{Z_n}{(\lambda^*)^n}, S_n=\dfrac{W_n}{(\lambda^*)^n}$ and $ \tilde{\mathcal F}_n=\sigma (Z_0, V^{(k,\ell)}_{i,j}, \, 1\leq i\leq p, 1\leq j \leq q, k\in \NN^*, 1\leq \ell \leq n).$ We consider $Z_0=z$ and define for $n\geq 1$, $\hat{S}_{n}=(\hat{S}_{n,1},\dots, \hat{S}_{n,q})$ with for $j\in\{1,\dots,q\}$,
    
    \[\hat{S}_{n,j} = \dfrac{1}{(\lambda^*)^n}\sum_{i=1}^p\sum_{k=1}^{Z_{n-1,i}}V^{(k,n)}_{i,j}\mathds{1}_{V^{(k,n)}_{i,j}\leq (\lambda^*)^{n-1}}\]
    Note that for $j\in\{1,\dots,q\},$
    \begin{equation}
    \label{S_hat}
    \mathbb E(\hat{S}_{n+1,j}\mid  \tilde{\mathcal F}_n)=\dfrac{1}{\lambda^*} \sum_{i=1}^pN_{n,i}\mathbb E (V_{i,j}\mathds{1}_{V_{i,j}\leq (\lambda^*)^n}).
    \end{equation}
    Since $\lambda^*>1$, we have that $V_{i,j}\mathds{1}_{V_{i,j}\leq (\lambda^*)^n}\to V_{i,j}$ almost surely as $n\to +\infty$. Hence, by the Monotone Convergence Theorem, $\EE(V_{i,j}\mathds{1}_{V_{i,j}\leq (\lambda^*)^n})\to \EE(V_{i,j}).$ Then, taking $n\to +\infty$, we have that 
    \begin{equation}
    \label{convergence_expectation}
    \mathbb E(\hat{S}_{n+1}|\tilde{\mathcal F}_n) \longrightarrow \dfrac{1}{\lambda^*}\mathcal{C}z^*\mathbb V\text{ a.s.}
    \end{equation}
    
    We consider now the martingale given by
    \[\left(\sum_{n=1}^m \left(\hat{S}_{n}-\mathbb E(\hat{S}_{n}\mid \tilde{\mathcal F}_{n-1})\right)\right)_{m\in \mathbb N}.\]
    We have that for all $j\in\{1,\dots, q\}$ and $n\in \mathbb N$
    \begin{align*}
    \text{Var} \left(\hat{S}_{n+1,j} - \mathbb E (\hat{S}_{n+1,j}\mid \tilde{\mathcal F}_n)\right)& = \text{Var} \left(\dfrac{1}{(\lambda^*)^{n+1}}\sum_{i=1}^p\sum_{k=1}^{Z_n,i}\left(V_{i,j}^{(k,n)}\mathds{1}_{V_{i,j}^{(k,n)}\leq (\lambda^*)^n} - \mathbb E\left(V_{i,j}^{(k,n)}\mathds{1}_{V_{i,j}^{(k,n)}\leq (\lambda^*)^n}\right)\right)\right)\\
    &=\dfrac{1}{(\lambda^*)^{-2(n+1)}}\sum_{i=1}^p \mathbb E(Z_{n,i}) \text{Var}\left(V_{i,j}\mathds{1}_{V_{i,j}\leq (\lambda^*)^n}\right)\\
    &\leq (\lambda^*)^{-n-2} \sum_{i=1}^p \dfrac{\mathfrak M^n_i(z)}{(\lambda^*)^n} \text{Var}\left(V_{i,j}\mathds{1}_{V_{i,j}\leq (\lambda^*)^n}\right)\\
    &\leq (\lambda^*)^{-n-2} \sum_{i=1}^p \dfrac{\mathfrak M^n_i(z)}{(\lambda^*)^n}\int_0^{+\infty}x^2 \mathds{1}_{x\leq (\lambda^*)^n} \,\mathrm dF_{i,j}(x),
    \end{align*}
    where $F_{i,j}(x)=\mathbb P (V_{i,j}\leq x).$
    
    Since $\left(\dfrac{\mathfrak M^n(z)}{(\lambda^*)^n}\right)_{n\in \NN}$ is convergent, then it is bounded, and so there exists $C>0$ such that
    
    \begin{align*}
    \sum_{n\in \NN} \text{Var}\left(\hat{S}_{n+1,j} - \mathbb E(\hat{S}_{n+1,j}\mid \tilde{\mathcal F}_n)\right) & \leq C \sum_{i=1}^p \int_0^{+\infty} x^2 \sum_{n\in\NN} \dfrac{1}{(\lambda^*)^n}\mathds{1}_{x\leq (\lambda^*)^n} \,\mathrm dF_{i,j}(x)\\
    &=C\sum_{i=1}^p\int_0^{+\infty} x^2O(x^{-1}) \,\mathrm dF_{i,j}(x) <+\infty,
    \end{align*}
    and so applying the Martingale Convergence Theorem, we have that $\sum_{n\in\NN} \left(\hat{S}_{n+1,j}-\mathbb E(\hat{S}_{n+1,j}\mid \tilde{\mathcal F}_n) \right)$
    is convergent almost surely and in $L^1$. This implies that $\hat{S}_{n+1,j} - \mathbb E(\hat{S}_{n+1,j}\mid \tilde{\mathcal F}_n)\to 0$ almost surely. Thanks to \eqref{convergence_expectation}, we have that $\hat{S}_{n+1}\to (\lambda^*)^{-1}\mathcal C z^* \mathbb V$. We finish by proving that $(S_n)_{n\in\NN}$ is an equivalent sequence. Fix $j\in \{1,\dots,q\}$
    \begin{align*}
    \sum_{n\in\NN} \mathbb P (S_{n+1,j}\neq \hat{S}_{n+1,j}) & =\sum_{n\in\NN} \mathbb E(\mathbb P (\exists i \leq p, \exists k\leq Z_{n,i}, V_{i,j}^{(k,n)}>(\lambda^*)^n\mid \tilde{\mathcal F}_n))\\
    &\leq \sum_{n\in\NN}\sum_{i=1}^p \mathbb E\left( \sum_{k=1}^{Z_{n,i}} \mathbb P (V_{i,j}>(\lambda^*)^n)\right)\\
    &\leq C\sum_{i=1}^p \sum_{n\in\NN} (\lambda^*)^n\mathbb P(V_{i,j}>(\lambda^*)^n)\\
    &\leq C\sum_{i=1}^p \int\limits_0^{+\infty} \sum_{n\in\NN} (\lambda^*)^n \mathds{1}_{x>(\lambda^*)^n}\,\mathrm dF_{i,j}(x)\\
    &=\sum_{i=1}^p \int\limits_0^{+\infty} O(x)\,\mathrm dF_{i,j}(x) <+\infty.
    \end{align*}
    The conclusion then follows by the Borel-Cantelli lemma.

\subsection{Extinction vs $\{\mathcal C=0\}$.}\label{sec:extinction_event} In this section, we prove the second part of Theorem~\ref{thm:type_profiles}.

The inclusion $\{\exists n\in\mathbb N,\:Z_n=0\} \subset \{\mathcal C=0\}$ is obvious. We then consider the case $\lambda^*>1$ and show that $\{\forall n,\ Z_n\neq 0)\subset \{\mathcal C>0\}$.

For all $\varepsilon>0$, we set 
\[
\tau_{\varepsilon}=\inf\{n\geq 0,\ \frac{\mathcal P(Z_n)}{(\lambda^*)^n}\leq \varepsilon\}.
\]

By assumption $\mathcal C$ is non-degenerate at $0$ for all $z$ such that $q_z<1$. Hence, for all $z$ such that $q_z<1$, there exists $\varepsilon_z>0$ such that 
\[
\mathbb P(\tau_{\varepsilon_z}=+\infty\mid Z_0=z)>0.
\]

In addition, Theorem~\ref{thm:integrabilitycriterion} entails that there exists $r_0\in\mathbb N$ such that, for all $|z|\geq r_0$, $q_z<1$. Hence,
setting
\[
\varepsilon_0=\min_{i=1,\ldots,p} \varepsilon_{r_0 e_i}
\]
and using the fact that, by superadditivity, $\mathbb P(\tau_{\varepsilon_0}=+\infty\mid Z_0=z)$ is increasing with $z$, we deduce that, for all $|z|\geq pr_0$,
\begin{align*}
\mathbb P(\tau_{\varepsilon_0}=+\infty\mid Z_0=z)&\geq a_0:=\min_{i=1,\ldots,p} \mathbb P(\tau_{\varepsilon_0}=+\infty\mid Z_0=r_0 e_i)\\
&\geq \min_{i=1,\ldots,p} \mathbb P(\tau_{\varepsilon_{r_0e_i}}=+\infty\mid Z_0=r_0 e_i)>0.
\end{align*}

We define $\tau^0=0$, $\tau^1=\tau_{\varepsilon_0}$ and, for all $n\geq 1$,
\[
\tau^{n+1}=\inf\{n\geq \tau^n+1,\ \frac{\mathcal P(Z_n)}{(\lambda^*)^n}\leq \varepsilon_0\}.
\]
Then we obtain, for all $|z|\geq pr_0$ and all $n\geq 1$,
\begin{align}
&\mathbb P(\{\tau^{n+1}<+\infty\}\cap \{|Z_m|\geq p r_0,\ \forall m\in[0,\tau^n]\}\mid Z_0=z)\nonumber\\
&\qquad\qquad\leq \mathbb E\left(1_{\tau^n<+\infty,\ |Z_m|\geq p r_0,\ \forall m\in[0,\tau^{n}]}\,\mathbb P(\tau_{\varepsilon_0}<+\infty\mid Z_0=z')_{\vert {z'=Z_{\tau^n}}}\mid Z_0=z\right)\nonumber\\
&\qquad\qquad\leq \mathbb P(\{\tau^{n}<+\infty\}\cap \{|Z_m|\geq p r_0,\ \forall m\in[0,\tau^{n-1}]\}\mid Z_0=z)\,(1-a_0)\nonumber\\
&\qquad\qquad\leq \ldots \leq (1-a_0)^{n+1}.\label{eq:Cdege1}
\end{align}
In addition, according to Theorem~\ref{thm:LGN1} and Theorem~\ref{thm:integrabilitycriterion}, for all $\eta\in (0,1)$, there exists $r_\eta>0$ such that, for all $|z|\geq r_\eta$,
\begin{align*}
\mathbb P(|Z_n|\geq p r_0,\ \forall n\geq 0\mid Z_0=z)\geq 1-\eta.
\end{align*}
Using this last inequality and~\eqref{eq:Cdege1}, we deduce that, for all $|z|\geq r_\eta$ and all $n\geq 1$,
\begin{align*}
\mathbb P(\tau^{n+1}<+\infty\mid Z_0=z)&\leq \mathbb P(\{\tau^{n+1}<+\infty\}\cap \{|Z_n|\geq pr_0,\ \forall n\in[0,\tau^n]\}\mid Z_0=z)\\
&\qquad\qquad + \mathbb P(\exists n\geq 0 \text{ such that } |Z_n|< pr_0\mid Z_0=z)\\
&\leq (1-a_0)^{n+1}+\eta.
\end{align*}
Since $\{\mathcal C=0\}\subset \{\tau^{n+1}<+\infty,\ \forall n\geq 1\}$, we deduce that, for all $|z|\geq r_\eta$,
\begin{align}
\label{eq:Cdege2}
\mathbb P(\mathcal C=0\mid Z_0=z)&\leq \eta.
\end{align}
Denoting $T_\eta:=\inf\{n\geq 0,\ |Z_n|\geq r_\eta\}$, we deduce from the transitivity assumption that $\{T_\eta<+\infty\}\supset \{\forall n,\ Z_n\neq 0\}$.
Hence, for all $z\in \mathbb N^p$, using the strong Markov property at time $T_\eta$ and then~\eqref{eq:Cdege2},
\begin{align*}
\mathbb P(\mathcal C=0\text{ and }\forall n,\ Z_n\neq 0\mid Z_0=z)&\leq \mathbb P(\mathcal C= 0\text{ and }T_\eta<+\infty\mid Z_0=z)\\
&=\mathbb E\left(1_{T_\eta<+\infty}\,\mathbb P\left(\lim_{n\to+\infty}\frac{\mathcal P(Z_n)}{(\lambda^*)^{n+u}}= 0\mid Z_0=z'\right)_{\vert u=T_\eta,\,z'={Z_{T_\eta}}}\mid Z_0=z\right)\\
&=\mathbb E\left(1_{T_\eta<+\infty}\mathbb P\left(\mathcal C=0\mid Z_0=z'\right)_{\vert {z'=Z_{T_\eta}}}\mid Z_0=z\right)\leq \eta.
\end{align*}

Since the last inequality holds true for all $\eta\in(0,1)$, we deduce that, for all $z\in\mathbb N^p$,
\begin{align*}
\mathbb P(\mathcal C=0\text{ and }\forall n,\ Z_n\neq 0\mid Z_0=z)=0.
\end{align*}

\subsection{$L^1$ convergence and non-degeneracy of the limit.} \label{sec:proofpropL1-condition} In this section, we prove the convergence of $(C_n)_{n\in\NN}$ in $L^1$ to a non-degenerate limit $\mathcal C$, which corresponds to Proposition~\ref{porp:L1-condition}. We recall that in this part we assume that there exists a concave monotone increasing function $U:\mathbb R_+ \longrightarrow \mathbb R_+$, such that for all $y\in\RR_+$,
\[\sup_{z\in \RR_+^p: \mathcal P(z)=y} \mathbb E(|\mathcal P (Z_1) - \mathcal P(\mathfrak M(\lfloor z\rfloor ))|\mid Z_0=\lfloor z\rfloor )\leq U(y),\]
with $y\to\nicefrac{U(y)}{y}$ non-increasing and
\[\int\limits_1^{+\infty} \dfrac{U(y)}{y^2}\,\mathrm dy<+\infty.\]

The idea behind the proof is to use the following lemma.

\begin{lem}\label{lem:klebaner_1984}[See \cite{klebaner1984geometric} - Lemma 2] Let $f:\RR_+\longrightarrow \RR_+$ be a non-increasing function, such that $x\mapsto xf(x)$ is non-decreasing and
    \[\sum_{n=1}^{+\infty} \dfrac{f(n)}{n}<+\infty.
    \]
    Let $(a_n)_{n\in\NN}$ be a sequence of positive numbers satisfying for some $m>1$ and all $n\geq 0$
    \[|a_{n+1}-a_n|\leq a_nf(a_nm^n).
    \]
    Then 
\begin{itemize}
    \item $\lim\limits_{n\to+\infty}a_n=a$ exists,
    \item there exists a constant $b_0$ depending only on the function $f$ and $m$ such that if $a_0>b_0$ then $a>0$.
\end{itemize}
\end{lem}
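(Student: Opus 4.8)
The plan is to rewrite the recursion multiplicatively: since every $a_n$ is positive, the hypothesis says $\bigl|\frac{a_{n+1}}{a_n}-1\bigr|\le f(a_nm^n)$, so both conclusions will follow once one controls the series $\sum_n f(a_nm^n)$ and shows that $(a_n)$ stays bounded and, in the relevant regime, bounded away from $0$. The only fact about $f$ I need beyond monotonicity is that $\sum_n f(n)/n<\infty$ forces $f(x)\to0$ as $x\to\infty$: otherwise $f$, being non-increasing, would be bounded below by some $c>0$, whence $\sum_n f(n)/n\ge c\sum_n 1/n=+\infty$. So I fix once and for all a threshold $x_0=x_0(f,m)$ with $f(x)\le\frac{m-1}{2m}$ for every $x\ge x_0$, and set $\rho:=\frac{m+1}{2}>1$.

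The first step is a dichotomy for the auxiliary sequence $(a_nm^n)_{n\ge0}$. \emph{Either} $a_nm^n<x_0$ for every $n$, in which case $a_n\le x_0\,m^{-n}\to0$ and the limit exists trivially (equal to $0$); \emph{or} there is some $N$ with $a_Nm^N\ge x_0$. In the second case the lower bound $a_{n+1}\ge a_n\bigl(1-f(a_nm^n)\bigr)$, combined with $f(a_Nm^N)\le\frac{m-1}{2m}$, yields $a_{N+1}m^{N+1}\ge m\cdot\frac{m+1}{2m}\,a_Nm^N=\rho\,a_Nm^N\ge x_0$, and iterating this gives
\[
a_nm^n\ \ge\ \rho^{\,n-N}\,x_0\qquad\text{for all }n\ge N .
\]

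Still in the second case, I would then bound $\sum_{n\ge N}f(a_nm^n)\le\sum_{j\ge0}f(\rho^{j}x_0)$ and show the right-hand side is finite by a Cauchy–condensation comparison with $\sum_n f(n)/n$: since $f$ is non-increasing, for $j$ large the block of integers in $[\rho^{j}x_0,\rho^{j+1}x_0]$ (of which there are at least a fixed multiple of $\rho^{j}x_0$) each contributes at least $\frac{f(\rho^{j+1}x_0)}{\rho^{j+1}x_0}$ to $\sum_n f(n)/n$, so $\sum_j f(\rho^{j}x_0)\lesssim\sum_n f(n)/n<\infty$; the hypothesis that $x\mapsto xf(x)$ is non-decreasing is convenient here to keep this comparison clean. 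Consequently the products $\prod_{n\ge N}\bigl(1\pm f(a_nm^n)\bigr)$ converge to strictly positive limits (each factor $1-f(a_nm^n)\ge1-\frac{m-1}{2m}>0$), which yields at once $0<\inf_n a_n\le\sup_n a_n<\infty$ and $\sum_n|a_{n+1}-a_n|\le\bigl(\sup_n a_n\bigr)\sum_n f(a_nm^n)<\infty$; hence $a_n$ converges to some $a\in(0,\infty)$.

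Putting the two branches together shows that $\lim_n a_n$ always exists. For the second assertion it then suffices to observe that if $a_0\ge x_0$ we are automatically in the second branch with $N=0$, so $a=\lim_n a_n>0$; thus $b_0:=x_0$ works and depends only on $f$ and $m$. I expect the main technical point to be the condensation estimate $\sum_j f(\rho^{j}x_0)<+\infty$, i.e. carefully converting $\sum_n f(n)/n<+\infty$ into summability along the geometric sequence $(\rho^{j}x_0)_j$ (tracking the integer counts in each block), together with the routine but necessary bookkeeping that keeps every manipulation of the ratios $a_{n+1}/a_n$ legitimate thanks to positivity of the $a_n$.
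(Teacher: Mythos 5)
The paper itself does not prove this lemma—it simply cites Klebaner (1984), Lemma~2—so there is no in-paper proof to compare against. Your argument, however, is correct and self-contained, so it is worth saying what works. The dichotomy is exhaustive and handles the two regimes cleanly: if $a_n m^n < x_0$ for all $n$, then $a_n < x_0 m^{-n}\to 0$, so the limit exists (and equals $0$); otherwise, once $a_N m^N \ge x_0$, the choice of $x_0$ guarantees $f(a_N m^N)\le\tfrac{m-1}{2m}$, hence $a_{N+1}m^{N+1}\ge\rho\, a_N m^N$ with $\rho=\tfrac{m+1}{2}>1$, and by induction $a_n m^n\ge\rho^{n-N}x_0$ for all $n\ge N$. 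The condensation step is sound: since $f$ is non-increasing, each integer $n\in[\rho^j x_0,\rho^{j+1}x_0)$ satisfies $f(n)/n\ge f(\rho^{j+1}x_0)/(\rho^{j+1}x_0)$, and there are (for $j$ large) at least $\tfrac12\rho^j x_0(\rho-1)$ such integers, so $\sum_n f(n)/n<\infty$ forces $\sum_j f(\rho^j x_0)<\infty$. From there the products $\prod_{n\ge N}(1\pm f(a_n m^n))$ converge to positive limits (every factor $1-f(a_n m^n)\ge\tfrac{m+1}{2m}>0$), giving $0<\inf_{n\ge N}a_n\le\sup_{n\ge N}a_n<\infty$ and $\sum_n|a_{n+1}-a_n|<\infty$, hence a positive limit. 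Taking $b_0:=x_0$, which depends only on $f$ and $m$, yields the second conclusion. One small remark: as you yourself observe, your proof never actually invokes the hypothesis that $x\mapsto xf(x)$ is non-decreasing—that assumption is present in Klebaner's original statement (presumably used in his argument), but your route bypasses it, so your version is in fact marginally more general.
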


We start our proof by noticing that, for all $z\in \NN^p$,
\begin{align*}
|\mathbb E(C_{n+1}\mid Z_0=z) - \mathbb E(C_n\mid Z_0=z)| &\leq \mathbb E(|C_{n+1}-C_n|\mid Z_0=z)\\
& = \mathbb E(\mathbb E(|C_{n+1} - C_n| \mid \mathcal F_n)\mid Z_0=z)\\
&=\dfrac{1}{(\lambda^*)^{n+1}} \mathbb E(\mathbb E(|\mathcal P(Z_{n+1})-\lambda^* \mathcal P(Z_n)| \mid \mathcal F_n)\mid Z_0=z)\\
&\leq \dfrac{1}{(\lambda^*)^{n+1}}\mathbb E(U(\mathcal P(Z_n))\mid Z_0=z),
\end{align*}
and so applying Jensen's inequality, since $U$ is concave, we get

\begin{equation}
\label{inequality_2}
\mathbb E (|C_{n+1} - C_n|\mid Z_0=z) \leq (\lambda^*)^{-1} \mathbb E(C_n\mid Z_0=z) \dfrac{U(\mathbb E(\mathcal P(Z_n)\mid Z_0=z))}{\mathbb E(\mathcal P(Z_n)\mid Z_0=z)}
\end{equation}

From this inequality, we have that if we define
\[F(x) = \dfrac{U(x)}{\lambda^* x},\]
then, we obtain,
\[
|\mathbb E(C_{n+1}\mid Z_0=z) - \mathbb E(C_n\mid Z_0=z)| \leq \mathbb E(C_n\mid Z_0=z) F(\mathbb E(C_n\mid Z_0=z)(\lambda^*)^n),\]
and we can apply Lemma~\ref{lem:klebaner_1984} with $f=F$ and $m=\lambda^*$.  This implies that, for all $z\in \NN^p$,
\[
c(z):=\lim_{n\to+\infty} \mathbb E(C_n|Z_0=z)
\]
exists. It also implies that there exists $b_0$ such that if $\mathcal P(z)\geq b_0$, then $c(z)>0$. Since $\mathcal P$ is homogeneous and lower bounded away from $0$ on $S$, we deduce that there exists $r_0>0$ such that, for all $z\in\NN^p$ with $|z|\geq r_0$, $c(z)>0$. Since in addition $c(z)$ is increasing with $z$, we deduce that 
\[
\underline c:= \inf_{z\in\NN^p,|z|\geq r_0} c(z)>0.
\]

If we now define 
\[
T=\inf \{n\in\NN\::\:|Z_n|\geq r_0\},
\]
we have that, if $z\in\NN^p$ is such that $q_z<1$, then $\PP(T<+\infty\mid Z_0=z)\geq q_z>0$ by transitivity assumption and so, applying the strong Markov property we obtain
\begin{align*}
c(z)=\lim_{n\to\infty}\EE(C_n|Z_0=z)&\geq \lim_{n\to+\infty}\EE\left(\EE(C_n\mid Z_0=y)_{\rvert y= Z_T}(\lambda^*)^{-T}\mathds{1}_{T<+\infty}\mid Z_0=z\right)\\
&\geq \underline{c}\,\EE\left((\lambda^*)^{-T}\mathds{1}_{T<+\infty}\mid Z_0=z\right)>0.
\end{align*}

Now fix $z\in \NN^p$ such that $q_z<1$ and  take $\varepsilon$ such that $c(z)-\varepsilon>0$. We can find $N_0$ such that for all $n\geq N_0$, 
\[ c(z)-\varepsilon \leq \mathbb E(C_n\mid Z_0=z) \leq c(z)+\varepsilon.\]

Hence, using this in \eqref{inequality_2}, we get that for all $n\geq N_0$, since $x\to \nicefrac{\hat U(x)}{x}$ is non-increasing,

\[\mathbb E(|C_{n+1}-C_n| \mid Z_0=z) \leq (\lambda^*)^{-1} ( c(z)+\varepsilon) \dfrac{U(( c(z)+\varepsilon)(\lambda^*)^n)}{( c(z)-\varepsilon)(\lambda^*)^n},\]
and so we can find $C>0$ and $\delta >0$, such that for all $n\in \NN$,

\[\mathbb E(|C_{n+1}-C_n| \mid Z_0=z) \leq C\dfrac{U(\delta (\lambda^*)^n)}{(\lambda^*)^n}.\]

On the other hand, since the integral $\displaystyle\int_1^{+\infty} \frac{U(y)}{y^2}\,\mathrm dy$ is finite, we have that the series 
\[\sum_{n\in \NN} \dfrac{U(\delta (\lambda^*)^n)}{(\lambda^*)^n}\]
converges (see \cite{klebaner1985limit}). This implies that $(C_n)_{n\in \NN}$ is a Cauchy sequence in $L^1$, which gives the $L^1$ convergence. 

Finally, we have that if $z\in \NN^p$ is such that $q_z<1$, then
\[\mathbb E(\mathcal C|Z_0=z) = \lim_{n\to +\infty} \mathbb E(C_n|Z_0=z)>0,\]
which proves that the limit is non-degenerate at $0$.

\subsection{Sufficient Conditions for the existence of $U$.} \label{sec:supermartingale_suff} In this section, we prove Proposition~\ref{prop:suffconditions_convergence} and Proposition~\ref{lem:cows_bulls}. We start the proof by stating and proving a lemma that is useful for the proof of both results. 

\begin{lem}
\label{lem:Expect_LlogL} Consider $p$ non-negative independent and integrable random variables  $X_1,\dots, X_p$. Set $z\in \NN^p$ and $\beta >0$, then 
\[\mathbb E\left( \left|\sum_{i=1}^p\sum_{k=1}^{z_i} \left(X_i^{(k)} - \mathbb E(X_i)\right)\right|\right) \leq |z|^{\beta+\nicefrac{1}{2}} + 2|z|\sum_{i=1}^p\int\limits_{|z|^\beta}^{+\infty}x\,\mathrm dF_i(x),\]
where $(X_1^{(k)},\ldots,X_p^{(k)})_{k\in \NN}$ are i.i.d. copies of $(X_1,\ldots,X_p)$, and $F_i(x) = \mathbb P(X_i\leq x)$.
\end{lem}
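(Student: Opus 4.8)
The plan is to split each random variable at the threshold $|z|^\beta$, bound the contribution of the truncated part via a second-moment (variance) estimate, and control the contribution of the large-value part by a crude first-moment bound. Concretely, I would write $X_i^{(k)} = X_i^{(k)}\mathds 1_{X_i^{(k)}\leq |z|^\beta} + X_i^{(k)}\mathds 1_{X_i^{(k)}> |z|^\beta}$ and correspondingly decompose $\mathbb E(X_i)$. Using the triangle inequality,
\[
\mathbb E\left(\left|\sum_{i=1}^p\sum_{k=1}^{z_i}\left(X_i^{(k)}-\mathbb E(X_i)\right)\right|\right)
\leq \mathbb E\left(\left|\sum_{i=1}^p\sum_{k=1}^{z_i}\left(X_i^{(k)}\mathds 1_{X_i^{(k)}\leq |z|^\beta}-\mathbb E(X_i\mathds 1_{X_i\leq |z|^\beta})\right)\right|\right)
+\sum_{i=1}^p z_i\Big(\mathbb E(X_i\mathds 1_{X_i>|z|^\beta})+\mathbb E(X_i\mathds 1_{X_i>|z|^\beta})\Big),
\]
where the last two identical terms come from the large-value parts of the $X_i^{(k)}$ and of $\mathbb E(X_i)$ respectively. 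Since $z_i\leq |z|$ and $\mathbb E(X_i\mathds 1_{X_i>|z|^\beta})=\int_{|z|^\beta}^{+\infty}x\,\mathrm dF_i(x)$ (modulo the atom at $|z|^\beta$, which I would absorb by using a non-strict inequality or noting it only helps), the second group of terms is bounded by $2|z|\sum_{i=1}^p\int_{|z|^\beta}^{+\infty}x\,\mathrm dF_i(x)$, which is exactly the second term in the claimed bound.

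For the truncated part, I would bound the $L^1$ norm by the $L^2$ norm via Jensen (or Cauchy--Schwarz): the variables $X_i^{(k)}\mathds 1_{X_i^{(k)}\leq |z|^\beta}-\mathbb E(X_i\mathds 1_{X_i\leq|z|^\beta})$ are independent, centered, so the variance of the sum is the sum of variances, giving
\[
\mathbb E\left(\left|\sum_{i=1}^p\sum_{k=1}^{z_i}\left(X_i^{(k)}\mathds 1_{X_i^{(k)}\leq |z|^\beta}-\mathbb E(X_i\mathds 1_{X_i\leq |z|^\beta})\right)\right|\right)
\leq \left(\sum_{i=1}^p z_i\,\operatorname{Var}\!\left(X_i\mathds 1_{X_i\leq |z|^\beta}\right)\right)^{1/2}
\leq \left(\sum_{i=1}^p z_i\,\mathbb E\!\left(X_i^2\mathds 1_{X_i\leq |z|^\beta}\right)\right)^{1/2}.
\]
Now $\mathbb E(X_i^2\mathds 1_{X_i\leq |z|^\beta})\leq |z|^\beta\,\mathbb E(X_i)$ since $X_i\leq |z|^\beta$ on the relevant event, so the right-hand side is at most $\big(|z|^{\beta}\sum_{i=1}^p z_i\,\mathbb E(X_i)\big)^{1/2}\leq \big(|z|^{1+\beta}\sum_{i=1}^p\mathbb E(X_i)\big)^{1/2}$. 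At this point one needs the bound to be $\leq |z|^{\beta+1/2}$; this requires absorbing the constant $\big(\sum_i \mathbb E(X_i)\big)^{1/2}$, which I would handle either by noting the statement is applied with $X_i=V_{i,j}$ so the constant is fixed (and the inequality is meant up to such a harmless constant, or one restricts to $|z|$ large), or more cleanly by using $\mathbb E(X_i^2\mathds 1_{X_i\leq|z|^\beta})\leq |z|^\beta\mathbb E(X_i\mathds 1_{X_i\le |z|^\beta})$ and then $\sum_i z_i\mathbb E(X_i\mathds 1_{X_i\le|z|^\beta})\le |z|\cdot(\text{bounded})$. Combining the two groups of terms yields the claimed inequality.

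The only real subtlety — and hence the "main obstacle" — is the bookkeeping of the constant in the first term so that it comes out as exactly $|z|^{\beta+1/2}$ rather than $C|z|^{\beta+1/2}$; I expect this is resolved by the fact that the lemma is invoked only for the specific integrable family $(V_{i,j})$ and the redundant factor is swept into a later constant, or by a slightly sharper truncation estimate as indicated above. Everything else is a routine truncation-plus-variance argument, so I would present it compactly.
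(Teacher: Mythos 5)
Your decomposition, triangle inequality, $L^1\le L^2$ step, and variance-of-a-sum computation all match the paper exactly. The only place you diverge is the bound on the truncated second moment, and that is precisely where your gap lies. You use $\mathbb E\bigl(X_i^2\mathds 1_{X_i\le|z|^\beta}\bigr)\le |z|^\beta\,\mathbb E(X_i)$, which drags in the unwanted constant $\sum_i\mathbb E(X_i)$, and you then speculate about sweeping it into a later constant or restricting to large $|z|$. Neither is needed, and neither is what the paper does.

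The clean resolution, which is the one the paper uses, is the \emph{cruder} pointwise bound: $X_i\mathds 1_{X_i\le|z|^\beta}\le|z|^\beta$ almost surely, hence
\[
\operatorname{Var}\bigl(X_i\mathds 1_{X_i\le|z|^\beta}\bigr)\le\mathbb E\bigl[(X_i\mathds 1_{X_i\le|z|^\beta})^2\bigr]\le|z|^{2\beta},
\]
so that summing over $i$ and $k$ gives $\sum_{i=1}^p\sum_{k=1}^{z_i}|z|^{2\beta}=|z|^{2\beta+1}$, and taking square roots yields exactly $|z|^{\beta+1/2}$ with no extraneous factor. Your bound $|z|^\beta\,\mathbb E(X_i)$ is sharper when $\mathbb E(X_i)\le|z|^\beta$, but sharpness is not what is wanted here; the lemma is stated as a uniform, constant-free inequality, and the crude $|z|^{2\beta}$ bound is what makes it hold verbatim for every $z$. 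Once you replace your second-moment bound by this one, the rest of your argument closes the proof; the large-value part you handle identically to the paper.
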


\begin{proof} We have 
\begin{align*}
\mathbb E\left( \left|\sum_{i=1}^p\sum_{k=1}^{z_i} \left(X_i^{(k)} - \mathbb E(X_i)\right)\right|\right)&\leq  \mathbb E\left( \left|\sum_{i=1}^p\sum_{k=1}^{z_i} \left(X_i^{(k)}\mathds{1}_{X_i^{(k)} \leq |z|^{\beta}} - \mathbb E(X_i\mathds{1}_{X_i \leq |z|^{\beta}})\right)\right|\right)\\
&+\mathbb E\left( \left|\sum_{i=1}^p\sum_{k=1}^{z_i} \left(X_i^{(k)}\mathds{1}_{X_i^{(k)}> |z|^{\beta}} - \mathbb E(X_i\mathds{1}_{X_i > |z|^{\beta}})\right)\right|\right).
\end{align*}

We bound the two expectations above separately. For the first one we have, setting $Y_i^{(k)} = X_i^{(k)}\mathds{1}_{X_i^{(k)}\leq |z|^{\beta}}$,
\begin{align*}
\mathbb E \left(\left|\sum_{i=1}^p \sum_{k=1}^{z_i} \left(Y_i^{(k)}- \mathbb E(Y_i^{(k)})\right)\right|\right)^2 & \leq \mathbb E \left(\left(\sum_{i=1}^p \sum_{k=1}^{z_i} \left(Y_i^{(k)}- \mathbb E(Y_i^{(k)})\right)\right)^2\right)\\
&=\sum_{i=1}^p\sum_{k=1}^{z_i} \mathbb E\left(\left(Y_i^{(k)}-\mathbb E(Y_i^{(k)})\right)^2\right)\\
&=\sum\limits_{i=1}^p \sum\limits_{k=1}^{z_i} \text{Var}\left(X_i^{(k)}\mathds{1}_{X_i^{(k)}\leq |z|^{\beta}}\right)\leq\sum_{i=1}^p\sum_{k=1}^{z_i}|z|^{2\beta}\\
&= |z|^{2\beta +1}.
\end{align*}
For the second term, we have 
\begin{align*}
\mathbb E \left(\left|\sum_{i=1}^p \sum_{k=1}^{z_i} \left(X_i^{(k)}\mathds{1}_{X_i^{(k)}>|z|^{\beta}}- \mathbb E(X_i^{(k)}\mathds{1}_{X_i^{(k)}>|z|^{\beta}})\right)\right|\right)&\leq 2\sum_{i=1}^p\sum_{k=1}^{z_i} \mathbb E\left(X_i^{(k)}\mathds{1}_{X_i^{(k)}>|z|^{\beta}}\right)\\
&=2\sum_{i=1}^p z_i \int\limits_0^{+\infty} x \mathds{1}_{x>|z|^{\beta}} \,\mathrm dF_i(x)\\
&\leq 2|z|\sum_{i=1}^p \int\limits_{|z|^{\beta}}^{+\infty} x\,\mathrm dF_i(x),
\end{align*}
and the result follows.
\end{proof}

The following lemma is a key ingredient of the proof.

\begin{lem}\label{lem:Klebaner_1985}[See \cite{klebaner1985limit} - Page 52] Let $f$ be a positive function on $[1,+\infty)$ such that $x\mapsto\frac{f(x)}{x}$ is non-increasing and
\[\int\limits_1^{+\infty} \dfrac{f(x)}{x^2}\,\mathrm dx<+\infty.\]
Then there exists a monotone increasing function $\hat{f}$ such that for all $x\in \RR_+,\ \hat{f}(x)\geq f(x)$, $x\to\nicefrac{\hat{f}(x)}{x}$ is non-increasing, $\int_1^{+\infty}(\nicefrac{\hat{f}(x)}{x^2})\,\mathrm dx<+\infty$ and $\hat{f}$ is concave on $\RR_+$.
\end{lem}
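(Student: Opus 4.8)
The plan is to construct $\hat f$ explicitly as a primitive of the non-increasing function $g:=f(\cdot)/\cdot$, rather than (say) taking the least concave majorant of $f$ itself, which could be far too large in the ``valleys'' of $f$ and ruin the integrability. First I would extend $g$ to all of $\RR_+$ by setting $\tilde g(t):=f(1)$ for $t\in[0,1]$ and $\tilde g(t):=f(t)/t$ for $t\ge 1$; since $t\mapsto f(t)/t$ is non-increasing on $[1,+\infty)$ with value $f(1)$ at $t=1$, the function $\tilde g$ is non-increasing, positive (as $f$ is positive) and, being monotone, locally bounded and measurable. Then I would simply define
\[
\hat f(x):=\int_0^x \tilde g(t)\,\mathrm dt,\qquad x\in\RR_+,
\]
which is well defined, finite, continuous and $\hat f(0)=0$.

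The four required properties then follow from elementary integral comparisons, which I would state but not belabour. \emph{Concavity on $\RR_+$}: for $a<c<b$ with $c$ a convex combination of $a$ and $b$, monotonicity of $\tilde g$ gives $\frac1{c-a}\int_a^c\tilde g\ge \tilde g(c)\ge\frac1{b-c}\int_c^b\tilde g$, i.e.\ the chord slopes of $\hat f$ are non-increasing. \emph{Monotonicity}: $\tilde g\ge 0$ (indeed $>0$), so $\hat f$ is (strictly) increasing. \emph{Majorization}: for $x\ge 1$ one has $\tilde g(t)\ge\tilde g(x)=f(x)/x$ for all $t\in[0,x]$, hence $\hat f(x)\ge x\cdot f(x)/x=f(x)$, while $\hat f\ge 0$ everywhere. \emph{$x\mapsto\hat f(x)/x$ non-increasing}: since $\tilde g(x)\le\frac1x\int_0^x\tilde g=\hat f(x)/x$, for $0<x\le y$ we get $\hat f(y)=\hat f(x)+\int_x^y\tilde g\le \hat f(x)+(y-x)\tilde g(x)\le \hat f(x)\,y/x$.

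The only step that genuinely uses the hypothesis $\int_1^{+\infty}f(x)/x^2\,\mathrm dx<+\infty$, and the one I would write out with care, is the finiteness of $\int_1^{+\infty}\hat f(x)/x^2\,\mathrm dx$; this is the crux of the lemma. Writing $\hat f(x)=f(1)+\int_1^x g(t)\,\mathrm dt$ for $x\ge 1$ (using $\int_0^1\tilde g=f(1)$), the constant term contributes $f(1)\int_1^{+\infty}x^{-2}\,\mathrm dx=f(1)<+\infty$, and for the rest Tonelli's theorem gives
\[
\int_1^{+\infty}\frac1{x^2}\int_1^x g(t)\,\mathrm dt\,\mathrm dx=\int_1^{+\infty}g(t)\int_t^{+\infty}\frac{\mathrm dx}{x^2}\,\mathrm dt=\int_1^{+\infty}\frac{g(t)}{t}\,\mathrm dt=\int_1^{+\infty}\frac{f(t)}{t^2}\,\mathrm dt<+\infty.
\]
I do not anticipate a real obstacle: once one observes that integrating $f(x)/x$ automatically yields a concave increasing majorant whose ``$/x^2$''-integral is, up to a harmless constant, exactly the given integral, every verification is routine. (If the conclusion is to be read literally with ``$\hat f(x)\ge f(x)$ for all $x\in\RR_+$'', it suffices to have redefined $f$ on $[0,1)$ as the constant $f(1)$ beforehand, which changes nothing above.)
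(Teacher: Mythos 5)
The paper does not give its own proof of this lemma---it is quoted directly from Klebaner~\cite{klebaner1985limit}---so there is no in-paper argument to compare against. Your construction $\hat f(x)=\int_0^x \tilde g$ with $\tilde g$ the non-increasing extension of $f(\cdot)/\cdot$ is the natural (and, to my recollection, the same) route Klebaner takes, and every verification you give is correct: monotonicity of $\tilde g$ yields concavity of $\hat f$, the inequality $\hat f(x)\ge x\,\tilde g(x)=f(x)$ for $x\ge 1$, and the ratio bound; and the Tonelli computation cleanly reduces $\int_1^\infty\hat f(x)/x^2\,\mathrm dx$ to $f(1)+\int_1^\infty f(t)/t^2\,\mathrm dt$, which is exactly where the hypothesis enters.

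One small caveat concerning your final parenthetical: if you redefine $f\equiv f(1)$ on $[0,1)$, then for $x\in(0,1)$ you have $\hat f(x)=x f(1)<f(1)=f(x)$, so the majorization would in fact fail there. The harmless reading is simply that the requirement $\hat f\ge f$ is meant on the actual domain $[1,+\infty)$ of $f$ (which is what the application in Proposition~\ref{prop:suffconditions_convergence} needs, since the function $F$ it is applied to is bounded below near $0$ and the inequality is only ever used for large arguments); alternatively one could add a constant, $\hat f(x)=f(1)+\int_0^x\tilde g$, at no cost to any of the other four properties.
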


\begin{proof}[Proof of Proposition~\ref{prop:suffconditions_convergence}] We recall that for this proof, we have the following additional assumption.

\begin{asu}
\label{asu:L1_convergence}
\begin{enumerate} We assume that
        \item The mating function $\xi$ and the function $\mathcal P$ are Lipschitz.
        \item For all $i\in\{1,\dots p\}$ and $j\in\{1,\dots,q\}$, we have  $\mathbb E(V_{i,j}\log V_{i,j})<+\infty$.
        \item There exists $\alpha>0$ such that, $\forall z\in\RR_+^p\setminus\{0\}$,
        \begin{equation}
        \label{speed_condition}
        \left|\dfrac{\xi(z\mathbb V)}{|z|}- \dfrac{\mathfrak M(z)}{|z|}\right|=O(|z|^{-\alpha}).
        \end{equation} 
\end{enumerate}
\end{asu}

Since $\mathcal P$ is continuous, positive on $\RR_+^p\setminus \{0\}$ and positively homogeneous, there exist $L_1,L_2>0$ such that, for all $z\in \RR_+^p$, 
\begin{equation}
\label{bound_P}
L_1|z|\leq \mathcal P(z)\leq L_2|z|.
\end{equation}

Let us consider first $z\in \NN^p$. Since $\mathcal P$ is Lipschitz, there exists $K_1>0$ such that
\begin{align*}
\mathbb E(|\mathcal P(Z_1) - \lambda^*\mathcal P (z)|\mid Z_0=z)&=\mathbb E(|\mathcal P(Z_1) - \mathcal P(\mathfrak M(z))| \mid Z_0=z)\\
&\leq K_1\mathbb E\left(\left.\left|Z_1 - \mathfrak M(z)\right|\ \right| Z_0=z\right)\\
&\leq K_1\left(\mathbb E\left(\left|\left. Z_1-\xi(z\mathbb V)\right| \ \right|Z_0=z \right) + |z|\left|\dfrac{\xi(z\mathbb V)}{|z|} - \dfrac{\mathfrak M(z)}{|z|}\right|\right).
\end{align*}
Using that $\xi$ is also Lipschitz, there exists $K_2>0$ such that
\begin{align*}
\mathbb E\left(\left|\left. Z_1-\xi(z\mathbb V)\right| \ \right|Z_0=z \right) & \leq K_2\sum_{j=1}^q\mathbb E\left(\left|\sum_{i=1}^p\sum_{k=1}^{z_i} V_{i,j}^{(k)} - (z\mathbb V)_j\right|\right)\\
&=K_2\sum_{j=1}^q \mathbb E \left(\left|\sum_{i=1}^p \sum_{k=1}^{z_i} \left(V_{i,j}^{(k)}- \mathbb E(V_{i,j}^{(k)})\right)\right|\right)
\end{align*}
Making use of Lemma~\ref{lem:Expect_LlogL} with $\beta \in (0,\nicefrac{1}{2})$ and applying Assumption~\ref{asu:L1_convergence}-3, we obtain that there exists $K_3$ such that for all $z\in \NN^p$,
\[\mathbb E(|\mathcal P(Z_1) - \lambda^*\mathcal P (z)|\mid Z_0=z) \leq K_3\left(|z|^{\beta+\nicefrac{1}{2}}+|z|\sum_{i=1}^p\sum_{j=1}^q\int\limits_{|z|^{\beta}}^{+\infty} x\,\mathrm dF_{i,j}(x)+|z|^{1-\alpha}\right),\]
for $\alpha>0$ and $F_{i,j}(x) = \mathbb P(V_{i,j}\leq x)$.

To extend the previous bound to $z\notin \NN^p$, note that if $z\in\RR_+^p$ with $|z|>2p$, we have $0< \frac{1}{2}|z|\leq |\lfloor z \rfloor |\leq |z|$. Hence there exist $K_4,K_5>0$ such that for all $z\in\RR_+^p$ with $|z|>2p$,
\[\mathbb E(|\mathcal P(Z_1) - \lambda^*\mathcal P (\lfloor z\rfloor )|\mid Z_0=\lfloor z\rfloor )\leq K_4\left(|z| \sum_{i=1}^p \sum_{j=1}^q \int\limits_{K_5|z|^{\beta}}^{+\infty} x\,\mathrm dF_{i,j}(x) +|z|^{\beta+\nicefrac{1}{2}}+|z|^{1-\alpha}\right).\] 

Finally, applying \eqref{bound_P}, we get that there exists $C_1,C_2>0$, such that for all $z\in\RR^p_+$ with $|z|>2p$
\[\mathbb E(|\mathcal P(Z_1) - \lambda^*\mathcal P (\lfloor z\rfloor )|\mid Z_0=\lfloor z\rfloor )\leq C_1\left(\mathcal P(z) \sum_{i=1}^p \sum_{j=1}^q \int\limits_{C_2^{\beta}\mathcal P(z)^{\beta}}^{+\infty} x\,\mathrm dF_{i,j}(x)+ \mathcal P(z)^{\beta+\nicefrac{1}{2}}+\mathcal P(z)^{1-\alpha}\right).\]

Now define 
\[C_3:=\max_{z\in \NN^p: |z|\leq 2p} \mathbb E(|\mathcal P(Z_1)-\mathcal P(\lfloor z \rfloor ) | \mid Z_0=z)<+\infty.\]
Then, for all $z\in \RR_+^p$, 
\[\mathbb E(|\mathcal P(Z_1) - \lambda^*\mathcal P (\lfloor z\rfloor )|\mid Z_0=\lfloor z\rfloor )\leq C_1\left(\mathcal P(z) \sum_{i=1}^p \sum_{j=1}^q \int\limits_{C_2^{\beta}\mathcal P(z)^{\beta}}^{+\infty} x\,\mathrm dF_{i,j}(x)+ \mathcal P(z)^{\beta+\nicefrac{1}{2}}+\mathcal P(z)^{1-\alpha}\right)+C_3.\]
This implies that for all $y\in \RR_+$,
\[\sup_{z: \mathcal P (z) = y} \mathbb E(|\mathcal P(Z_1) - \lambda^*\mathcal P (\lfloor z\rfloor )|\mid Z_0=\lfloor z\rfloor ) \leq C_1\left(y\sum_{i=1}^p \sum_{j=1}^q \int\limits_{C_2^{\beta} y^{\beta}}^{+\infty}x\,\mathrm dF_{i,j}(x) +y^{\beta+\nicefrac{1}{2}}+y^{1-\alpha}\right)+C_3.\]

Now set $F:\RR_+\longrightarrow \RR_+$ given by
\[F(y)=C_1\left(y\sum_{i=1}^p \sum_{j=1}^q \int\limits_{C_2^{\beta} y^{\beta}}^{+\infty}x\,\mathrm dF_{i,j}(x) +y^{\beta+\nicefrac{1}{2}}+y^{1-\alpha}\right)+C_3.\]
Then we have on one hand that
\[\dfrac{F(y)}{y}=C_1\left(\sum_{i=1}^p \sum_{j=1}^q \int\limits_{C_2^{\beta} y^{\beta}}^{+\infty}x\,\mathrm dF_{i,j}(x) +y^{\beta-\nicefrac{1}{2}}+y^{-\alpha}\right)+\dfrac{C_3}{y},\]
defines a non-increasing function on $(0,+\infty)$ since $\beta \in (0,\nicefrac{1}{2})$ and $\alpha>0$.
On the other hand
\[\int\limits_1^{+\infty}\dfrac{F(y)}{y^2}\,\mathrm dy = C_1\left(\sum_{i=1}^p\sum_{j=1}^q \int\limits_1^{+\infty}\int\limits_{C_2^{\beta}y^{\beta}}^{+\infty}\dfrac{x}{y}\,\mathrm dF_{i,j}(x)\,\mathrm dy + \int\limits_1^{+\infty} \left(\dfrac{1}{y^{\nicefrac{3}{2}-\beta}}+\dfrac{1}{y^{1+\alpha}}\right)\,\mathrm dy\right)+\int\limits_1^{+\infty} \dfrac{C_3}{y^2}\,\mathrm dy.\]

Once again, since $\alpha>0$ and $\beta\in (0,\nicefrac{1}{2})$, we only need to prove that the first integral is finite. In fact, applying Fubini's Theorem,
\begin{align*}
\int\limits_1^{+\infty}\int\limits_{C_2^{\beta}y^{\beta}}^{+\infty}\dfrac{x}{y}\,\mathrm dF_{i,j}(x)\,\mathrm dy&\leq  \int\limits_0^{+\infty}x\int\limits_1^{\frac{1}{C_2}x^{\nicefrac{1}{\beta}}} \dfrac{\mathrm dy}{y} \,\mathrm dF_{i,j}(x)\\
&=\int\limits_0^{+\infty} x\,O(\log x)\,\mathrm dF_{i,j}(x),
\end{align*}
where the last integral is finite, since $\mathbb E(V_{i,j}\log V_{i,j})<+\infty$ by assumption. 

Applying Lemma~\ref{lem:Klebaner_1985}, there exists a concave monotone increasing function $U:\RR_+\longrightarrow \RR_+$, with $y\to \nicefrac{U(y)}{y}$ non-increasing and $\displaystyle\int_1^{+\infty} \frac{U(y)}{y^2}\,\mathrm dy<+\infty,$ and such that for all $y\in \RR_+$,
\[\sup_{z: \mathcal P(z) = y } \mathbb E(|\mathcal P (Z_1) - \lambda^*\mathcal P (\lfloor z \rfloor )| \mid Z_0=\lfloor z \rfloor ) \leq F(y) \leq U(y).\]
The proof is then complete.
\end{proof}

\begin{proof}[Proof of Proposition~\ref{lem:cows_bulls}] For this proof, we consider the case of the Promiscuous mating of Example~\ref{exa:cows_bulls}. We set $p=n_f$ and 
\[
\xi((x_1,\dots x_p),(y_1,\dots , y_{n_m})) = (x_1,\dots x_p)\prod_{j=1}^{n_m} \mathds{1}_{y_j>0}.
\]
Hence, we have $\mathfrak M (z) = z\mathbb X \mathds{1}_{z\mathbb Y>0}$. Since  $\mathbb Y_{i,j}>0$ for all $i\in\{1,\dots p\}$ and all $j\in\{1,\dots n_m\}$, we have $\mathfrak M(z) = z\mathbb X$. We also note that 
\[
L(z) = \lim_{n\to +\infty} \dfrac{\mathfrak M^n(z)}{(\lambda^*)^n} = \langle u^*,z\rangle z^*,
\]
where $u^*$ is the positive right eigenvector of $\XX$ such that $\langle u^*,z^*\rangle=1$ and $\langle \cdot , \cdot \rangle$ stands for the Euclidean product. In particular, $\mathcal P(z)=\langle u^*,z\rangle$.

Consider $z\in \NN^p$, then
\begin{align*}
\mathbb E(|\mathcal P (Z_1) - \lambda^*\mathcal P (z)| \mid Z_0 = z)&= \mathbb E(|\langle u^*, Z_1 \rangle - \langle u^* , z \mathbb X\rangle | \mid Z_0=z)\\
& = \mathbb E(|\langle u^*, Z_1 - z\mathbb X \rangle | \mid Z_0=z)\\
& \leq C_0\sum_{j=1}^p\mathbb E (|Z_{1.j} - (z\mathbb X)_j| \mid Z_0=z),
\end{align*}
for some constant $C_0>0$ and hence
\begin{align*}
\mathbb E(|\mathcal P (Z_1) - \lambda^*\mathcal P (z)| \mid Z_0 = z)
&\leq C_0 \sum_{j=1}^p\mathbb E\left(\left|\sum_{i=1}^p \sum_{k=1}^{z_i} X_{i,j}^{(k)}\mathds{1}_{\forall \ell \leq n_m,\:M_{1,\ell}>0} - \sum_{i=1}^p z_i \mathbb X_{i,j} \right|\right)\\
&\leq C_0\sum_{j=1}^p \mathbb E\left(\left|\sum_{i=1}^p\sum_{k=1}^{z_i} \left(X_{i,j}^{(k)} - \mathbb X_{i,j} \right)\right|\right)\\
&+C_0\sum_{j=1}^p\sum_{i=1}^p\sum_{k=1}^{z_i}\mathbb E\left(\left.X_{i,j}^{(k)}\mathds{1}_{\exists \ell \leq n_m,\: M_{1,\ell}=0}\right|Z_0=z\right).
\end{align*}
with $(X^{(k)})_{k\in\NN}$ a family of i.i.d copies of $X=(X_{i,j})_{1\leq i,j \leq p}$.
For the second term, we have for $i,j\in \{1,\dots, p\}$ and $k\in \{1,\dots z_i\}$ fixed
\begin{align*}
\mathbb E\left(\left.X_{i,j}^{(k)}\mathds{1}_{\exists \ell \leq n_m,\: M_{1,\ell}=0}\right|Z_0=z\right)& \leq \sum_{\ell=1}^{n_m}\mathbb E\left(\left.X_{i,j}^{(k)} \mathds{1}_{M_{1,\ell}=0}\right|Z_0=z\right)\\
&=\sum_{\ell=1}^{n_m} \mathbb E\left(X_{i,j}^{(k)} \prod_{i'=1}^p \prod_{k'=1}^{z_{i'}} \mathds{1}_{Y_{i',\ell}^{(k')}=0}\right)\\
&\leq \sum_{\ell=1}^{n_m} \mathbb X_{i,j} \prod_{i'=1}^{p}\prod_{\substack{k'=1\\(i',k')\neq (i,k)}}^{z_{i'}} \mathbb P \left(Y_{i',\ell}^{(k')}=0\right)\\
&\leq \mathbb X_{i,j} n_m \gamma^{|z|-1},
\end{align*}
with $\gamma=\max_{i'\leq p, \ell \leq n_m} \mathbb P(Y_{i',\ell}=0)\in [0,1)$.
Then, applying Lemma~\ref{lem:Expect_LlogL} with $\beta\in (0,\nicefrac{1}{2})$, we deduce that there exists $C_1>0$ such that, for all $z\in \NN^p$,
\[\mathbb E(|\mathcal P(Z_1) - \lambda^* \mathcal P(z)|\mid Z_0=z) \leq C_1\left(|z| \sum_{i=1}^p \sum_{j=1}^q \int\limits_{|z|^{\beta}}^{+\infty} x\,\mathrm dF_{i,j}(x)+ |z|^{\beta+\nicefrac{1}{2}}+|z|\gamma^{|z|-1}\right).\]

The result then follows as in the proof of Proposition~\ref{prop:suffconditions_convergence}.
\end{proof}

\appendix
\section{Transitivity condition under strong primitivity}
\label{sec:annex1}

  Our aim is to show that the transitivity condition of Assumption~\ref{asu:bigasu}.\ref{asu:transience} holds true under the third criterion provided below Assumption~\ref{asu:bigasu}.     We use the following auxiliary lemma, which is a consequence of superadditivity.

\begin{lem}
    \label{lem:superaddi}
    For all $z_0, \tilde z_0, z_1, \tilde z_1 \in \NN^p$,
    \begin{align*}
    \PP(Z_n\geq z_1+\tilde z_1 \, | \, Z_0\geq z_0+\tilde z_0)
    &\geq \PP(Z_n\geq z_1+\tilde z_1 \, | \, Z_0= z_0+\tilde z_0)\\
    &\geq \PP(Z_n\geq z_1 \, | \, Z_0= z_0)\times \PP(Z_n\geq \tilde z_1 \, | \, Z_0= \tilde z_0)\,.
    \end{align*}
\end{lem}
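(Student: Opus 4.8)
The plan is to prove both inequalities by a monotone coupling of the branching dynamics on a common probability space, the second (and essential) one exploiting the superadditivity~\eqref{eq:superadditive} of $\xi$. First I would record the elementary fact, used repeatedly, that a positive superadditive $\xi$ on $\NN^q$ with $\xi(0)=0$ is non-decreasing: for $x\le y$ one has $\xi(y)\ge\xi(x)+\xi(y-x)\ge\xi(x)$ since $\xi\ge0$. Thus for all $a,b\in\NN^q$ we have both $\xi(a+b)\ge\xi(a)$ and $\xi(a+b)\ge\xi(a)+\xi(b)$, and these are the only properties of the mating function the argument needs.

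For the second inequality I would introduce three processes driven by three mutually independent i.i.d.\ families of offspring vectors: a bGWbp $(Z_n^{(a)})$ with $Z_0^{(a)}=z_0$ driven by the first family, a bGWbp $(Z_n^{(b)})$ with $Z_0^{(b)}=\tilde z_0$ driven by the second, and a bGWbp $(Z_n)$ with $Z_0=z_0+\tilde z_0$ whose offspring are assembled as follows: at generation $n+1$, among the $Z_{n,i}$ couples of type $i$, the first $Z^{(a)}_{n,i}$ reuse the $a$-offspring $V^{(k,n+1),a}_{i,\cdot}$, the next $Z^{(b)}_{n,i}$ reuse the $b$-offspring, and the remaining $Z_{n,i}-Z^{(a)}_{n,i}-Z^{(b)}_{n,i}$ (if any) draw fresh offspring from the third family. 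Since $Z^{(a)}_{n,i}$ and $Z^{(b)}_{n,i}$ are measurable with respect to the offspring of generations $\le n$, this assignment is legitimate and $(Z_n)$ has the law of a bGWbp started from $z_0+\tilde z_0$. I would then show by induction that $Z_n\ge Z_n^{(a)}+Z_n^{(b)}$ componentwise: it holds at $n=0$, and if it holds at $n$ then the componentwise bound makes the above assignment feasible and gives $W_{n+1}\ge W^{(a)}_{n+1}+W^{(b)}_{n+1}$ (sum of nonnegative offspring), whence $Z_{n+1}=\xi(W_{n+1})\ge\xi(W^{(a)}_{n+1}+W^{(b)}_{n+1})\ge\xi(W^{(a)}_{n+1})+\xi(W^{(b)}_{n+1})=Z^{(a)}_{n+1}+Z^{(b)}_{n+1}$, using monotonicity then superadditivity of $\xi$. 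Hence $\{Z^{(a)}_n\ge z_1\}\cap\{Z^{(b)}_n\ge\tilde z_1\}\subset\{Z_n\ge z_1+\tilde z_1\}$, and independence of $(Z^{(a)}_n)$ and $(Z^{(b)}_n)$ yields the second inequality upon taking probabilities.

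For the first inequality I would run the same kind of coupling in the degenerate case $Z^{(b)}\equiv 0$: given $z\ge z_0+\tilde z_0$, couple a process started from $z$ with one started from $z_0+\tilde z_0$ by having the first $(z_0+\tilde z_0)_i$ couples of each type share offspring and the extra couples use fresh ones, so the same induction (now needing only monotonicity of $\xi$) gives $\PP(Z_n\ge v\mid Z_0=z)\ge\PP(Z_n\ge v\mid Z_0=z_0+\tilde z_0)$ for every $z\ge z_0+\tilde z_0$ and every $v\in\NN^p$; averaging over the conditional law of $Z_0$ on $\{Z_0\ge z_0+\tilde z_0\}$ then gives the first inequality. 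The main obstacle is the bookkeeping in the second step: one must verify that the reused families stay i.i.d.\ in the assembled process and, in particular, that the random counts $Z^{(a)}_{n,i},Z^{(b)}_{n,i}$ are independent of the generation-$(n+1)$ offspring used to extend each sub-process — once that measurability is set up cleanly, the domination $Z_n\ge Z_n^{(a)}+Z_n^{(b)}$ is the two-line induction above.
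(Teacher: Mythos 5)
Your proof is correct and follows essentially the same route as the paper's: both exhibit, by induction, a pointwise coupling in which the process started from $z_0+\tilde z_0$ dominates the sum of two independent copies started from $z_0$ and $\tilde z_0$, using monotonicity of $\xi$ to make the assignment feasible and superadditivity to pass through $\xi$. The only difference is presentational — you draw from three disjoint i.i.d. families while the paper index-splits a single family at each generation — which makes the measurability bookkeeping a bit more explicit but does not change the argument.
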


\begin{proof} First observe that the first inequality is a direct consequent of the fact that $\xi$ is an increasing function, so we deal only with the second inequality.  Consider $z_0,\tilde z_0, z_1, \tilde z_1 \in \NN^p$. We first treat the case $n=1$, and then proceed by induction. We have
\begin{align*}
\PP(Z_1 \geq z_1+\tilde z_1 &\mid Z_0=z_0+\tilde z_0) = \PP \left( \xi\left(\sum_{i=1}^p \sum_{k=1}^{z_{0,i}+\tilde z_{0,i}} V_{i,\cdot}^{(k,1)}\right)\geq z_1+\tilde z_1\right)\\
&\geq\PP\left(\xi\left(\sum_{i=1}^p \sum_{k=1}^{z_{0,i}} V_{i,\cdot}^{(k,1)}\right)+\xi\left(\sum_{i=1}^p \sum_{k=z_{0,i}+1}^{z_{0,i}+\tilde z_{0,i}} V_{i,\cdot}^{(k,1)}\right)\geq z_1+\tilde z_1\right)\\
&\geq \PP\left(\xi\left(\sum_{i=1}^p \sum_{k=1}^{z_{0,i}} V_{i,\cdot}^{(k,1)}\right)\geq z_1\, ,\xi\left(\sum_{i=1}^p \sum_{k=z_{0,i}+1}^{z_{0,i}+\tilde z_{0,i}} V_{i,\cdot}^{(k,1)}\right)\geq \tilde z_1\right)\\
&= \PP\left(\xi\left(\sum_{i=1}^p \sum_{k=1}^{z_{0,i}} V_{i,\cdot}^{(k,1)}\right)\geq z_1\right)\times \PP\left(\xi\left(\sum_{i=1}^p \sum_{k=1}^{\tilde z_{0,i}} V_{i,\cdot}^{(k,1)}\right)\geq \tilde z_1\right)\\
&=\PP(Z_1\geq z_1\mid Z_0=z_0)\times \PP(Z_1\geq \tilde z_1\mid Z_0=\tilde z_0).
\end{align*}

Observe that the first inequality in particular entails that if $(Z_n(z))_{n\in\NN}$ for $z\in\NN^p$ denotes a bGWbp with $Z_0=z$, then for all $z_0,\tilde z_0 \in \NN^p$, $Z_1(z_0+\tilde z_0)$ stochastically dominates $Z_1(z_0)+Z_1(\tilde z_0)$. Assume now that the result is true for some $n\in \NN$ and we prove it for $n+1$.
\begin{align*}
&\PP(Z_{n+1}\geq z_1+\tilde z_1 \mid Z_0=z_0+\tilde z_0)=\PP\left(\xi\left(\sum_{i=1}^p\sum_{k=1}^{Z_{n,i}(z_0+\tilde z_0)} V_{i,\cdot}^{(k,n+1)}\right)\geq z_1+\tilde z_1\right)\\
&\geq \PP\left(\xi\left(\sum_{i=1}^p\sum_{k=1}^{Z_{n,i}(z_0)}V_{i,\cdot}^{(k,n+1)}\right)\geq z_1\, ,\xi\left(\sum_{i=1}^p\sum_{k=Z_{n,i}(z_0)+1}^{Z_{n,i}(\tilde z_0)+Z_{n,i}(z_0)}V_{i,\cdot}^{(k,n+1)}\right)\geq \tilde z_1 \right)\\
&=\PP\left(\xi\left(\sum_{i=1}^p\sum_{k=1}^{Z_{n,i}(z_0)}V_{i,\cdot}^{(k,n+1)}\right)\geq z_1\right)\times \PP\left(\xi\left(\sum_{i=1}^p\sum_{k=Z_{n,i}(z_0)+1}^{Z_{n,i}(\tilde z_0)+Z_{n,i}(z_0)}V_{i,\cdot}^{(k,n+1)}\right)\geq \tilde z_1\right)\\
&=\PP(Z_{n+1}\geq z_1\mid Z_0=z_0)\times \PP(Z_{n+1}\geq \tilde z_1 \mid Z_0=\tilde z_0).
\end{align*}
which finishes the proof.
\end{proof}
We now prove that the third criterion provided below Assumption~\ref{asu:bigasu} leads to Assumption~\ref{asu:bigasu}.\ref{asu:transience} by contradiction. If this assumption is not satisfied, then there exits a non empty recurrent states class in $\NN^p\setminus\{0\}$. We can chose $z=(z_1,\dots,z_p)\in\NN^p\setminus\{0\}$ with the minimal size in its class, so that $\PP(|Z_n|\geq |z| \, | \, Z_0=z)=1$ for all $n\geq 1$. We prove that $2z$ is reachable from $z$, that is, there exist $n\geq 1$ such that $\PP(Z_n\geq 2\,z | Z_0=z)>0$. Once this is proved, we deduce from Lemma~\ref{lem:superaddi} that $\PP(|Z_n|\geq 2\,|z| | Z_0\geq 2\,z)\geq \PP(|Z_n|\geq |z| | Z_0=z)^2=1$, which implies that $\PP(\exists n,\, Z_n=z \, | \, Z_0\geq 2\,z)=0$ and contradicts the fact that $z$ is recurrent. We thus deduce that there is no recurrent state, except the absorbing state $\{0\}$, and Assumption~\ref{asu:bigasu}.\ref{asu:transience} is satisfied.

Let us prove that $2z$ is reachable from $z$ (in fact, we prove that every population can be doubled).
As the process is strongly primitive, for all $i\in \{1,\dots,p\}$, there exists $n_i$ such that for all $m\geq n_i$, $\PP(Z_{m,\ell}\geq 1 | Z_0=e_i)>0$. Let $m=\max_{1\leq i \leq p} n_i$, then for all $i\in \{1,\dots,p\}$,
\begin{equation}
\label{eq:ze_ell}
\PP(Z_m \geq e_\ell \, |\, Z_0=e_i)>0.
\end{equation}
Moreover, as $\PP(|Z_1|=2|Z_0=e_\ell)>0$, there exist $j_1,j_2 \in \{1,\dots,p\}$ such that 
\begin{equation}
\label{eq:doublepop}
\PP(Z_1\geq e_{j_1}+e_{j_2}\,|\, Z_0=e_\ell)>0.
\end{equation} 
By strongly primitive assumption, for $n=\max\{n_{j_1},n_{j_2}\}$, 
\[
\PP(Z_n \geq e_i \, |\, Z_0=e_{j_1})>0 \quad \text{and} \quad \PP(Z_n \geq e_i \, |\, Z_0=e_{j_2})>0,
\]
hence, by Lemma~\ref{lem:superaddi}, 
\begin{align}
\PP(Z_n \geq 2e_i \, |\, Z_0=e_{j_1}+e_{j_2}) 
&\geq \PP(Z_n \geq e_i \, |\, Z_0=e_{j_1})\,\PP(Z_n \geq e_i \, |\, Z_0=e_{j_2})
\label{eq:2z}
>0.
\end{align}
Finally,
\begin{align*}
\PP(Z_{n+m+1} \geq 2e_i \, | \, Z_0=e_i)
&\geq \PP(Z_{n+m+1} \geq 2e_i \, | \, Z_{m+1}\geq e_{j_1}+e_{j_2})
\\
&\quad \times   \PP(Z_{m+1} \geq e_{j_1}+e_{j_2} \, | \, Z_m \geq e_\ell)
\\
&\quad \times   \PP(Z_m \geq e_\ell \, | \, Z_0=e_i)
\end{align*}
then, from \eqref{eq:ze_ell}, \eqref{eq:doublepop}, \eqref{eq:2z} and the Markov property, $\PP(Z_{n+m+1} \geq 2e_i \, | \, Z_0=e_i)>0$ for all $i\in\{1,\dots,p\}$, and we conclude by Lemma~\ref{lem:superaddi} that $\PP(Z_{n+m+1} \geq 2z \, | \, Z_0=z)>0$.  

    \bibliographystyle{alpha}
    \bibliography{refs}
\end{document}